\renewcommand\eqref[1]{(\ref{#1})} 
\numberwithin{equation}{section}
\theoremstyle{plain}
\newtheorem{thm}{Theorem}[section]
\newtheorem{proposition}[thm]{Proposition}
\newtheorem{cor}[thm]{Corollary}
\newtheorem{lemma}[thm]{Lemma}
\theoremstyle{definition}
\newtheorem{defn}[thm]{Definition}
\newcommand{\Rn}{\mathbb R^{n}}
\newcommand{\Zn}{{\mathbb Z^{n}}}
\newcommand{\Tn}{{\mathbb T^{n}}}
\newcommand{\Op}{{\textrm{\rm Op}}}
\newcommand{\Optn}{{\textrm{\rm Op}_\Tn}}
\newcommand{\Opzn}{{\textrm{\rm Op}_\Zn}}
\newcommand{\HS}{{\mathtt{HS}}}
\def\Ftn{{\mathcal F}_\Tn}
\def\Fzn{{\mathcal F}_\Zn}
\def\p#1{{\left({#1}\right)}}
\newcommand{\R}{\mathbb{R}}
\newcommand{\C}{\mathbb{C}}
\begin{document}

   \title[Difference equations and pseudo-differential operators on $\mathbb{Z}^n$]
   {Difference equations and pseudo-differential operators on $\mathbb{Z}^n$}


\author[Linda N. A. Botchway]{Linda N. A. Botchway}
\address{
  Linda N. A. Botchway:
  \endgraf
  African Institute for Mathematical Sciences
  \endgraf
  AIMS-GH, Biriwa
  \endgraf
  Ghana
   \endgraf
  {\it E-mail address} {\rm linda@aims.edu.gh}
  }
\author[P. Ga\"el Kibiti]{P. Ga\"el Kibiti}
\address{
  P. Ga\"el Kibiti:
  \endgraf
  African Institute for Mathematical Sciences
  \endgraf
  AIMS-GH, Biriwa
  \endgraf
  Ghana
   \endgraf
  {\it E-mail address} {\rm gael@aims.edu.gh}
  }
  \author[Michael Ruzhansky]{Michael Ruzhansky}
\address{
  Michael Ruzhansky:
  \endgraf
	Department of Mathematics: Analysis, Logic and Discrete Mathematics
	\endgraf
	Ghent University, Belgium
	\endgraf
	and
	\endgraf
	School of Mathematical Sciences
		\endgraf Queen Mary University of London 
			\endgraf
		United Kingdom
		\endgraf
	and
			\endgraf
			 Department of Mathematics
  \endgraf
  Imperial College London
  \endgraf
  United Kingdom
  \endgraf
	{\it E-mail address} {\rm Michael.Ruzhansky@ugent.be}  }

\thanks{The third author was supported in parts by the EPSRC
 grants EP/K039407/1, EP/R003025/1, by the Leverhulme Grants RPG-2014-02, RPG-2017-151, and by the FWO Odysseus 1 Grant G.0H94.18N: Analysis and Partial Differential Equations.}

     \keywords{Pseudo-differential operators, calculus, kernel, ellipticity, difference equations, Fourier integral operators, G{\aa}rding inequality}
     \subjclass{58J40, 35S05, 35S30, 42B05, 47G30}
     
  \dedicatory{Dedicated to the $85^{th}$ birthday of Francis Kofi Ampenyi Allotey}   

     \begin{abstract}
     In this paper we develop the calculus of pseudo-differential operators on the lattice $\mathbb{Z}^n$, which we can call {\em pseudo-difference operators}. An interesting feature of this calculus is that the global frequency space ($\mathbb{T}^n$) is compact so the symbol classes are defined in terms of the behaviour with respect to the lattice variable. We establish formulae for composition, adjoint, transpose, and for parametrix for the elliptic operators. We also give conditions for the $\ell^2$, weighted $\ell^2$, and $\ell^p$ boundedness of operators and for their compactness on $\ell^p$. We describe a link to the toroidal quantization on the torus $\mathbb{T}^n$, and apply it to give conditions for the membership in Schatten classes on $\ell^2(\mathbb{Z}^n)$. Furthermore, we discuss a version of Fourier integral operators on the lattice and give conditions for their $\ell^2$-boundedness. The results are applied to give estimates for solutions to difference equations on the lattice $\mathbb{Z}^n$. Moreover, we establish G{\aa}rding  and sharp G{\aa}rding inequalities, with an application to the unique solvability of parabolic equations on  the lattice $\mathbb{Z}^n$.
     \end{abstract}
     \maketitle

\tableofcontents

\section{Introduction}

The aim of this paper is to develop a calculus of pseudo-differential operators suitable for the applications to solving difference equations on the lattice $\mathbb{Z}^n$. Such equations naturally appear in various problems of modelling and in the discretisation of continuous problems. 
We call the appearing operators pseudo-difference operators.

As a simple motivating example, consider the equation
  \begin{equation}\label{EQ:ex0}
  \sum_{j=1}^{n}\Big(f(k + v_j) - f(k - v_j)\Big) + af(k)=g(k), \quad k\in\Zn,
  \end{equation}
 with  $v_{j} = (0, \ldots, 0, 1, 0, \ldots, 0)\in \mathbb{Z}^n$, where the $j^{th}$ element of $v_j$ is  $1$, and all other elements are $0$. The idea of this paper is to use the suitable Fourier analysis for solving difference equations of this type. Thus, if, for example, ${\rm Re}\, a\not=0$, this equation is solvable for any $g\in\ell^2(\Zn)$ and the solution can be given by the formula
\begin{equation}\label{EQ:ex1}
f(k) = \int_{\mathbb{T}^n}e^{2\pi k\cdot x}\frac{1}{2i\sum_{j=1}^n \sin(2\pi x_j) + a} \widehat{g}(x)\text{d}x,
\end{equation} 
    where 
      \begin{equation}\label{gael}
    \widehat{g}(x) = \sum_{k\in \mathbb{Z}^{n}} e^{-2\pi i k\cdot x} g(k),\quad x\in\Tn,
      \end{equation}
 is the Fourier transform of $g$. 
 Formula \eqref{EQ:ex1} also extends to give solutions to \eqref{EQ:ex0} for any tempered growth function $g\in\mathcal{S}'(\Zn)$.
 In particular, if $g\in\ell^2(\Zn)$ then the solution $f$ to the difference equation \eqref{EQ:ex0} given by \eqref{EQ:ex1} satisfies $f\in\ell^2(\Zn)$ 
  and, more generally,
 if $g$ satisfies $$
   \sum_{k\in\Zn} (1+|k|)^{s} |g(k)|^2<\infty
 $$
 for some $s\in\R$, then the 
 solution $f$ to the difference equation \eqref{EQ:ex0} given by \eqref{EQ:ex1} also satisfies
 $$
   \sum_{k\in\Zn} (1+|k|)^{s} |f(k)|^2<\infty,
 $$
see Example (3) in Section \ref{SEC:Ex}.
 
 \smallskip
 From the point of view of the theory of pseudo-differential operators the operators of the form \eqref{EQ:ex1} extend the usual difference operators on the lattice, thus we feel that the term {\em pseudo-difference operators} may be justified to emphasise that they extend the class of difference operators into a $*$-algebra. This agrees with the terminology already existing in the literature (see e.g. \cite{Rab09}).
 
 \smallskip
 The theory of pseudo-differential operators is usually effective in answering a number of questions such as:

\begin{itemize}
\item What kind of difference equations, similar to \eqref{EQ:ex0}, are solvable in this way?
\item Given $g(k)$, what are properties of $f(k)$ given the representation formula \eqref{EQ:ex1}?
\item What about variable coefficient versions of difference equations, where the coefficients of the equations may also depend on $k$?
\end{itemize} 

It is the purpose of this paper to answer these and other questions by developing a suitable theory of pseudo-differential operators on the lattice $\Zn$. There are several interesting features of this theory making it essentially different from the classical theory of pseudo-differential operators on $\Rn$, such as

\begin{itemize}
\item The phase space is $\Zn\times\Tn$ with the frequencies being elements of the compact space $\Tn$ (the torus $\Tn:=\Rn/\Zn$). The usual theory of pseudo-differential operators works with symbol classes with increasing decay of symbols after taking their derivatives in the frequency variable. Here we can not expect any improving decay properties in frequency since the frequency space is compact.
\item We can not work with derivatives with respect to the space variable $k\in \Zn$. Therefore, this needs to be replaced by working with appropriate difference operators on the lattice.
\end{itemize} 

The developed theory is similar in spirit to the global theory of (toroidal) pseudo-differential operators on the torus $\Tn$ consistently developed in \cite{RT-JFAA}, see also
\cite{Agran79,Agran84,Amosov88} as well as \cite{RT-Birk,RT-NFAO} for earlier works. In particular, symbol classes in this paper will coincide with symbol classes developed in \cite{RT-JFAA,ruzhansky2009pseudo} but with a twist, swapping the order of the space and frequency variables. As a result, we can draw on properties of these symbol classes developed in the above works. Several attempts of developing a suitable theory of pseudo-differential operators on the lattice $\Zn$ have been done in the literature, see e.g. \cite{Rab2010,Rab09}, but with no symbolic calculus. Operators on the one-dimensional lattice $\mathbb{Z}$ have been considered in 
\cite{molahajloo2009pseudo,DW2013,ghaemi2015study}, but again with no symbolic calculus, and $\ell^p$ estimates were considered in \cite{CR2011} and \cite{Catana14}. 
There are numerous physical models realised as difference equations, see e.g. \cite{RR-2006,Rab09,Rab13} for the analysis of Schr\"odinger, Dirac, and other operators on lattices, and their spectral properties. 
 
Our symbol classes exhibit improvement when differences are taken with respect to the space (lattice) variable, thus resembling in their behaviour the so-called SG pseudo-differential operators in $\Rn$, developed by Cordes \cite{Cordes-bk}, but again with a twist in variables.

In the recent work \cite{Mantoiu-Ruzhansky-DM}, a framework has been developed for the theory of pseudo-differential operators on general locally compact type I groups, with application to spectral properties of operators. The Kohn-Nirenberg type quantization formula that the analysis of this paper relies on makes a special case of the construction of \cite{Mantoiu-Ruzhansky-DM}, but there is only limited symbolic calculus available there due to the generality of the setting. Thus, here we are able to provide much deeper analysis in terms of the asymptotic expansions and formulae for the appearing symbols and kernels.

Compared to situations when the state space is compact (for example, \cite{RT-IMRN} on compact groups or \cite{RN-IMRN} on compact manifolds) the calculus here is essentially different since we can not construct it using standard methods relying on the decay properties in the frequency component of the phase space since the frequency space is our case is the torus $\Tn$ which is  compact, so no improvement with respect to the decay of the frequency variable is possible.

To give some further details, the Fourier transform of $f\in \ell^{1}(\mathbb{Z}^n)$  is defined by 
     \begin{equation}\label{gael1}
    \mathcal{F}_{\Zn}f(x):= \widehat{f}(x) := \sum_{k\in \mathbb{Z}^{n}} e^{-2\pi i k\cdot x}
     f(k),
     \end{equation}
     for $ x\in \mathbb{T}^n=\Rn/\Zn$, where we will be denoting, throughout the paper,  
     $$k\cdot x = \sum_{j=1}^{n}k_{j}x_{j},$$   where $k = (k_{1},\ldots, k_{n})$ and $x = (x_{1}, \ldots, x_{n})$.
     The Fourier transform extends to $\ell^{2}(\mathbb{Z}^n)$ and the constants are normalised in such a way that we have the Plancherel formula  
     \begin{equation}\label{perrin}
     \sum_{k\in \mathbb{Z}^{n}}|f(k)|^{2} = \int_{\mathbb{T}^n}|\widehat{f}(x)|^{2}\text{d}x.
     \end{equation}
     The Fourier inversion formula takes the form
     \begin{equation}\label{EQ:Finv}
     f(k) = \int_{\mathbb{T}^n}e^{2\pi i k\cdot x}\widehat{f}(x)\text{d}x, \ \ \ k\in \mathbb{Z}^n.
     \end{equation}
     For a measurable function  
     $\sigma :\mathbb{Z}^{n}\times\mathbb{T}^{n}\rightarrow \mathbb{C}$,  we define the sequence $\text{Op}(\sigma)f$ by
            \begin{equation}\label{neil}
                    \text{Op}(\sigma)f(k) := \int_{\mathbb{T}^n}e^{2\pi i k\cdot x}\sigma(k,x)\widehat{f}(x)\text{d}x, \ \ \ k\in \mathbb{Z}^n.
                    \end{equation}
            The operator defined by equation \eqref{neil}
             will be called the pseudo-differential operator on $\mathbb{Z}^n$ corresponding to the symbol $\sigma=\sigma(k,x)$, $(k,x)\in \mathbb{Z}^{n}\times\mathbb{T}^{n}.$ 
       We will also call it a {\em pseudo-difference operator} and the quantization $\sigma\mapsto\Op(\sigma)$ the {\em lattice quantization}.
             
The Schwartz space $\mathcal{S}(\mathbb{Z}^n)$ on the lattice $\Zn$ is the space of rapidly decreasing functions $\varphi: \mathbb{Z}^n\rightarrow \mathbb{C}$, that is, $\varphi\in \mathcal{S}(\mathbb{Z}^n)$ if for any $M<\infty$ there exits a constant $C_{\varphi, M}$ such that
   \[|\varphi(k)| \leq C_{\varphi, M}(1+|k|)^{-M},\quad \textrm{for all}\;k\in \mathbb{Z}^n.\]
The topology on $\mathcal{S}(\mathbb{Z}^n)$ is given by the seminorms $p_{j}$, where $j\in \mathbb{N}_{0}$ and $p_{j}(\varphi): = \sup\limits_{k\in \mathbb{Z}^n} (1+|k|)^{j}|\varphi(k)|.$
The space of tempered distributions $\mathcal{S}'(\mathbb{Z}^n)$ is the topological dual to $\mathcal{S}(\mathbb{Z}^n)$, i.e. the space of all linear continuous functionals on $\mathcal{S}(\mathbb{Z}^n)$.

  As usual, the theory of pseudo-differential operators applies not only to specific class of operators but to general linear continuous operators on the space. Indeed, let 
  $A: \ell^\infty(\mathbb{Z}^n)\rightarrow \mathcal{S}'(\mathbb{Z}^n)$ be a continuous linear operator. Then  it can be shown that $A$ can be written in the form $A=\Op(\sigma)$ with
   the symbol $\sigma=\sigma(k,x)$ defined by
    \[\sigma(k, x):= e_{-x}(k)Ae_x(k) = e^{-2\pi ik\cdot x}A\Big(e^{2\pi ik\cdot x}\Big),\]
where $e_x(k)= e^{2\pi ik\cdot x}$ for all $k\in\Zn$ and $x\in \mathbb{T}^n.$ Indeed, using the Fourier inversion formula \eqref{EQ:Finv}
in the usual way one can justify the simple calculation
  \begin{equation*}
\begin{aligned}
Af(k) & = A\Big( \int_{\mathbb{T}^n}e^{2\pi i k\cdot x}\widehat{f}(x)\text{d}x\Big) \\
& =  \int_{\mathbb{T}^n}A\left(e^{2\pi  i k\cdot x}\right)\widehat{f}(x)\text{d}x \\
& =  \int_{\mathbb{T}^n} e^{2\pi  i k\cdot x}\sigma(k,x) \widehat{f}(x)\text{d}x  = \Op(\sigma)f(k).
\end{aligned}
\end{equation*} 

We also present the following applications of the developed calculus:

\begin{itemize}
\item conditions for $\ell^2(\Zn)$-boundedness, compactness, and membership in Schatten-von Neumann classes  for operators in terms of their symbols; Gohberg lemma and estimates for the essential spectrum of operators;
\item conditions for weighted $\ell^2(\Zn)$-boundedness and weighted a-priori estimates for difference equations;
\item Fourier series operators and their $\ell^2(\Zn)$-boundedness;
\item G{\aa}rding  and sharp G{\aa}rding inequalities, with an application to the unique solvability of parabolic equations on  the lattice $\mathbb{Z}^n$.
\end{itemize} 
We also present conditions for $\ell^p(\Zn)$-boundedness and compactness, extending results of \cite{molahajloo2009pseudo} and \cite{CR2011}. We can note that compared to the existing literature on $\ell^2$-boundedness, our results do not require any decay properties of the symbol, thus also leading to a-priori estimates for elliptic difference equations without any loss of decay.
  
    In Section \ref{SEC:symbols} we introduce symbol classes and discuss the kernels of the corresponding pseudo-difference operators. An interesting difference with the usual theory of pseudo-differential operators is that since the space $\Zn$ is discrete, the Schwartz kernels of the corresponding pseudo-difference operators do not have singularity at the diagonal.

The plan of the paper is as follows. We study the properties of pseudo-difference operator on $\mathbb{Z}^n$  by 
first discussing in Section \ref{SEC:symbols} their symbols and kernels, as well as amplitudes.
The symbolic calculus is developed in Section \ref{SEC:calculus}.
In Section \ref{SEC:link} we establish the link between the quantizations on the lattice $\Zn$ and the torus $\Tn$. 
In Section \ref{SEC:L2-HS} we investigate the boundedness on $\ell^2(\Zn)$, weighted $\ell^2(\Zn)$, $\ell^p(\Zn)$, compactness on $\ell^p(\Zn)$, and give conditions for the membership in Schatten-von Neumann classes.  Finally, in Section \ref{SEC:Ex} we give some examples.

Throughout the paper we will use the notation $\mathbb{N}_0=\mathbb{N}\cup\{0\}.$

\medskip
{\bf Acknowledgements.} 
  The authors would like to thank AIMS Ghana and its academic director Emmanuel Essel for the hospitality during the first two authors' study there and during the third author's visits to Ghana and to the African Institute for Mathematical Sciences (AIMS) when this work was carried out.
  The authors would also like to thank Julio Delgado for discussions and valuable remarks.
  
 \section{Symbols, kernels, and amplitudes}
\label{SEC:symbols}
   
   For the developing of the symbolic calculus and for the definition of the symbol classes we need to have some analogues of derivatives in the space variable. For this purpose, we will be using the following difference operators.
      
    \begin{defn}[Difference operators]
     We define $\Delta^{\alpha}$ acting on functions $\tau:\Zn\to\C$ by the formula
     \begin{equation}\label{EQ:diffs}
\Delta^{\alpha} \tau (k):=\int_{\mathbb{T}^n} e^{2\pi i k\cdot y} (e^{2\pi i  y} -1 )^\alpha  \widehat{\tau}(y)\text{d}y, 
\end{equation} 
     where $\alpha=(\alpha_1,\ldots,\alpha_n)$ and 
     $$
     (e^{2\pi i  y} -1 )^\alpha=(e^{2\pi i  y_1} -1 )^{\alpha_1}\cdots (e^{2\pi i  y_n} -1 )^{\alpha_n}.
     $$
     It is easy to see that we have the decomposition 
     \begin{equation}\label{EQ:diffs0} 
     \Delta^{\alpha} = \Delta^{\alpha_1}_{1}\cdot \ldots \cdot \Delta^{\alpha_n}_{n},
     \end{equation} 
      where denoting $v_j = (0, \cdots ,0, 1,0, \cdots ,0)$ with $1$ at the $j^{th}$ position,  we have
     \begin{eqnarray}\label{EQ:diffs2}
     \Delta_j \tau (k) &=&\int_{\mathbb{T}^n} e^{2\pi i (k+v_j)\cdot y}  \widehat{\tau}(y)\  \text{d}y - \int_{\mathbb{T}^n} e^{2\pi i k\cdot y}  \widehat{\tau}(y)\text{d}y \nonumber \\
     &=& \tau (k + v_j) - \tau (k)
     \end{eqnarray}
    are the usual difference operators on $\Zn$. Therefore, formulae \eqref{EQ:diffs0}-\eqref{EQ:diffs2} give the equivalent characterisation to  \eqref{EQ:diffs}, and can be taken as the definition of difference operators $\Delta^{\alpha}$.
    
    At the same time, the representation \eqref{EQ:diffs} becomes useful for comparing operators \eqref{EQ:diffs0}-\eqref{EQ:diffs2} to more general difference operators that will be introduced in Definition \ref{DEF:gdiff}.
    \end{defn}
    
    The formula \eqref{EQ:diffs} makes sense for $\tau\in{\mathcal S}'(\Zn)$. Indeed, in this case we have $\widehat{\tau}\in\mathcal{D}'(\Tn)$ and the formula \eqref{EQ:diffs} can be interpreted in terms of the distributional duality on $\Tn$, 
    \begin{equation}\label{EQ:ddif}
 \Delta^{\alpha} \tau (k)=\langle \widehat{\tau},e^{2\pi i k\cdot y} (e^{2\pi i  y} -1 )^\alpha\rangle
\end{equation} 
    acting on the $y$-variable.
    These operators have been introduced, analysed and shown to satisfy many useful properties, such as the Leibniz formula, summation by parts formula, Taylor expansion formula, and many others, in \cite{RT-JFAA} and
    \cite[Section 3.3]{ruzhansky2009pseudo} to which we refer for detailed discussions.
    
    As usual, we will be using the notation 
    $$ D_x^\alpha= D_{x_1}^{\alpha_1}\cdots D_{x_n}^{\alpha_n},\quad
    D_{x_j}=\frac{1}{2\pi i}\frac{\partial}{\partial x_j}.$$
    
    In the sequel we will be also using the multi-index notation $\alpha!=\alpha_1!\cdots\alpha_n!$
    
   It will be also convenient to use operators
   \begin{equation}\label{EQ:das}
D_x^{(\alpha)}= D_{x_1}^{(\alpha_1)}\cdots D_{x_n}^{(\alpha_n)},\quad
D_{x_j}^{(\ell)}=\prod_{m=0}^\ell \left(\frac{1}{2\pi i}\frac{\partial}{\partial x_j} -m\right), \quad \ell\in\mathbb{N}.
\end{equation} 
As usual, $D_x^0=D_x^{(0)}=I$. The operators $D_x^{(\alpha)}$ become very useful in the analysis related to the torus as they appear in the periodic Taylor expansion, see \eqref{EQ:Taylor}. Their precise form in \eqref{EQ:das} is related to properties of Stirling numbers, see \cite[Section 3.4]{ruzhansky2009pseudo}.
    
    \begin{defn}[Symbol classes $S^\mu_{\rho,\delta} (\mathbb{Z}^n \times \mathbb{T}^n)$]  
    \label{DEF:symbolclasses}  
    Let $\rho,\delta\in\R$.
    We say that a function $\sigma:\mathbb{Z}^n \times \mathbb{T}^n\to\C$ belongs to $S^\mu_{\rho,\delta} (\mathbb{Z}^n \times \mathbb{T}^n)$ if $\sigma(k, \cdot) \in C^\infty ( \mathbb{T}^n) $ for all $k \in \mathbb{Z}^n$, and for all multi-indices $\alpha,\beta$ there exists a positive constant $C_{\alpha,\beta}$ such that we have
    \begin{equation}\label{EQ:symbdef}
|D^{(\beta)}_{x}\Delta^{\alpha}_{k} \sigma(k,x)| \leq C_{\alpha,\beta}(1 +|k|)^{\mu-\rho|\alpha|+\delta|\beta|}
\end{equation} 
for all $k\in\Zn$ and $x\in\Tn$. 

If $\rho=1$ and $\delta=0$, we will denote simply 
$S^\mu (\mathbb{Z}^n \times \mathbb{T}^n):=S^\mu_{1,0} (\mathbb{Z}^n \times \mathbb{T}^n).$

We denote by $\Op(\sigma)$ the operator with symbol $\sigma$ given by 
    \begin{equation}\label{neil2}
                    \text{Op}(\sigma)f(k) := \int_{\mathbb{T}^n}e^{2\pi i k\cdot x}\sigma(k,x)\widehat{f}(x)\text{d}x, \ \ \ k\in \mathbb{Z}^n,
                    \end{equation}
and by $\Op(S^{\mu}_{\rho, \delta}(\mathbb{Z}^n\times \mathbb{T}^n))$ the collection of operators $\Op(\sigma)$ as $\sigma$ varies over the symbol class $S^\mu_{\rho,\delta} (\mathbb{Z}^n \times \mathbb{T}^n)$.
    \end{defn}
    
   Here and everywhere we may often write $\Delta^{\alpha}=\Delta^{\alpha}_{k}$ to emphasise that the difference operators are acting on functions with respect to the variable $k$.
   We note that these symbol classes, modulo swapping the order of the variables $x$ and $k$, have been extensively analysed and used in \cite{RT-JFAA} for the development of the global toroidal calculus of pseudo-differential operators on the torus $\Tn$. We also refer to 
    \cite[Chapter 4]{ruzhansky2009pseudo} for a thorough presentation of their properties.
    
   The classes on the torus, similar to Definition \ref{DEF:symbolclasses}, were analysed in \cite{ruzhansky2009pseudo}, and their equivalence (also on general compact Lie groups) to the usual H\"ormander classes was shown in \cite{ruzhansky2014hormander}.
          
   Pseudo-differential operator can be represented in various forms. For example, for suitable functions $f$, using formula \eqref{gael1} we can write
   \begin{eqnarray*}
   \text{Op}(\sigma)f(k) &=& \int_{\mathbb{T}^n}e^{2\pi ik\cdot x}\sigma(k,x) \widehat{f}(x)\text{d}x\\
   &=& \int_{\mathbb{T}^n} \sum_{m \in \mathbb{Z}^n}e^{2\pi i(k-m)\cdot x}\sigma(k,x) f(m)\text{d}x\\
   &=& \sum_{m \in \mathbb{Z}^n}\int_{\mathbb{T}^n}e^{2\pi i(k-m)\cdot x}\sigma(k,x) f(m)\text{d}x \\
   &=& \sum_{l \in \mathbb{Z}^n}\int_{\mathbb{T}^n} e^{2\pi il\cdot x}\sigma(k,x) f(k-l)\text{d}x \\
   &=& \sum_{l \in \mathbb{Z}^n} \kappa(k,l) f(k-l) \\
   &=& \sum_{m \in \mathbb{Z}^n} K(k,m) f(m),
   \end{eqnarray*}
   with kernels
   \begin{equation}\label{EQ:kernels}
K(k,m)= \kappa(k,k-m) \quad \text{and}  \quad \kappa(k,l)= \int_{\mathbb{T}^n}e^{2\pi il\cdot x}\sigma(k,x)\text{d}x. 
\end{equation} 
   
   We now establish some properties of the kernels of pseudo-difference operators on $\mathbb{Z}^n$ with symbols $\sigma\in S^{\mu}_{\rho, \delta}(\mathbb{Z}^n\times \mathbb{T}^n)$.
   
   \begin{thm}\label{THM:kernel}
   Let $\sigma\in S^{\mu}_{\rho, \delta}(\mathbb{Z}^n\times \mathbb{T}^n)$ and let $\delta\geq 0$. Then for every $N\in\mathbb{N}_0$ there exists a positive constant $C_N>0$ such that we have
   
   \begin{equation}\label{EQ:kernelsprops}
   |K(k, m)|\leq C_N (1+|k|)^{\mu + 2N\delta} (1+|k-m|)^{-2N}, 
   \end{equation}
   for all $k, m\in\Zn$.
   \end{thm}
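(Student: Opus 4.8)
The plan is to reduce \eqref{EQ:kernelsprops} to a decay estimate for $\kappa$ and then to produce that decay by the classical device of trading smoothness of $x\mapsto\sigma(k,x)$ on $\Tn$ for decay of its Fourier coefficients. First I would write $l=k-m$ and note that, since $K(k,m)=\kappa(k,k-m)$ by \eqref{EQ:kernels}, the assertion \eqref{EQ:kernelsprops} is equivalent to
\begin{equation*}
|\kappa(k,l)|\le C_N(1+|k|)^{\mu+2N\delta}(1+|l|)^{-2N},\qquad k,l\in\Zn.
\end{equation*}
By the second formula in \eqref{EQ:kernels}, for each fixed $k$ the quantity $\kappa(k,l)=\int_{\Tn}e^{2\pi i l\cdot x}\sigma(k,x)\,dx$ is a Fourier coefficient in $x$ of the smooth function $\sigma(k,\cdot)$, so the decay in $l$ must be extracted by differentiating in $x$.

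The mechanism is integration by parts on the torus against the operator $\mathcal{L}:=I+\sum_{j=1}^n D_{x_j}^2$, for which the exponentials are eigenfunctions: since $D_{x_j}e^{2\pi i l\cdot x}=l_j e^{2\pi i l\cdot x}$ we have $\mathcal{L}e^{2\pi i l\cdot x}=(1+|l|^2)e^{2\pi i l\cdot x}$, and hence
\begin{equation*}
\mathcal{L}^N e^{2\pi i l\cdot x}=(1+|l|^2)^N e^{2\pi i l\cdot x}.
\end{equation*}
As $\mathcal{L}^N$ involves only even powers of the $\partial_{x_j}$ with real coefficients, it is formally self-adjoint on $\Tn$ and no boundary terms arise, so I would move it onto $\sigma$ to obtain
\begin{equation*}
(1+|l|^2)^N\kappa(k,l)=\int_{\Tn}e^{2\pi i l\cdot x}\,\mathcal{L}^N\sigma(k,x)\,dx.
\end{equation*}
Taking absolute values, bounding the integrand by its supremum over $\Tn$, and using $(1+|l|^2)^N\ge 2^{-N}(1+|l|)^{2N}$ then reduces the whole statement to the pointwise estimate $\sup_{x}|\mathcal{L}^N\sigma(k,x)|\le C_N(1+|k|)^{\mu+2N\delta}$.

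It remains to prove this last estimate, and here the definition of the symbol class must be used with some care. Expanding $\mathcal{L}^N=(I+\sum_j D_{x_j}^2)^N$ shows that $\mathcal{L}^N\sigma$ is a finite linear combination of ordinary derivatives $D_x^\gamma\sigma$ with $|\gamma|\le 2N$, whereas the defining inequality \eqref{EQ:symbdef} is phrased in terms of the operators $D_x^{(\beta)}$ of \eqref{EQ:das}. I would bridge this gap using the triangular Stirling-number relations recalled after \eqref{EQ:das}, by which every $D_x^\gamma$ is a finite linear combination of the $D_x^{(\beta)}$ with $|\beta|\le|\gamma|$; applying \eqref{EQ:symbdef} with $\alpha=0$ then gives
\begin{equation*}
|D_x^\gamma\sigma(k,x)|\le C_\gamma\sum_{|\beta|\le|\gamma|}(1+|k|)^{\mu+\delta|\beta|}\le C_\gamma'(1+|k|)^{\mu+\delta|\gamma|}.
\end{equation*}
The hypothesis $\delta\ge0$ enters precisely here: it guarantees that the top-order contribution $|\beta|=|\gamma|$ dominates all lower-order ones. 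Summing over the finitely many $\gamma$ with $|\gamma|\le 2N$ and again invoking $\delta\ge0$ to absorb the orders $|\gamma|<2N$ into the top order $|\gamma|=2N$ yields $\sup_{x}|\mathcal{L}^N\sigma(k,x)|\le C_N(1+|k|)^{\mu+2N\delta}$, which completes the proof.

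I expect the integration-by-parts step and the eigenvalue identity for $\mathcal{L}$ to be entirely routine; the only point demanding genuine attention is the final paragraph, namely the passage from the $D_x^{(\beta)}$-seminorms to ordinary $x$-derivatives and the verification that $\delta\ge0$ makes the highest-order term govern the estimate. This is also the natural place where the standing assumption $\delta\ge0$ of the theorem is consumed.
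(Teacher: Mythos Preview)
Your proof is correct and follows essentially the same approach as the paper's, namely integration by parts on $\Tn$ against powers of $(1-\mathcal{L}_x)$ (equivalently your $\mathcal{L}=I+\sum_j D_{x_j}^2$) to trade $x$-smoothness for decay in $l=k-m$, followed by the symbol estimate on the resulting $x$-derivatives. If anything you are slightly more careful than the paper: you make explicit the passage from the $D_x^{(\beta)}$-seminorms to ordinary derivatives via the triangular Stirling relations, and you identify precisely where the hypothesis $\delta\ge 0$ is consumed, both of which the paper leaves implicit.
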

   
   In particular we note that in comparison to pseudo-differential operators on $\Rn$ or on $\Tn$, the kernel $K(k,m)$ is well defined for $k=m$ and has no singularity at the diagonal since the space $\Zn\times\Zn$ is discrete. We also note that we do not need any further restrictions on $\rho$ and $\delta$ in Theorem \ref{THM:kernel}.
   
   \begin{proof}[Proof of Theorem \ref{THM:kernel}]
   We note that for $k=m$ we have, using \eqref{EQ:kernels}, that
   \begin{equation}\label{EQ:kerdiag}
K(k,k)=\kappa(k, 0) = \int_{\mathbb{T}^n}\sigma(k, x)\text{d}x,
\end{equation} 
satisfying \eqref{EQ:kernelsprops} in this case.
   
   Let us now assume that $k\not=m$, so that also $l=k-m\not=0$.
   Denoting the Laplacian on $\Tn$ by $\displaystyle \mathcal{L}_x := \sum_{j = 1}^{n}\frac{\partial ^2}{\partial x_j^2}$, we have 
   \begin{equation}\label{EQ:Lapl}
(1 - \mathcal{L}_x)e^{2\pi il\cdot x} = \big(1 + 4\pi ^{2} |l|^{2}\big)e^{2\pi il\cdot x} \ \ \ ; \ \ \ e^{2\pi il\cdot x} = \frac{(1 - \mathcal{L}_x)}{1 + 4\pi ^{2} |l|^{2}}  e^{2\pi il\cdot x},
\end{equation}  
   so that for $l\neq 0$ we can write
   \begin{eqnarray*}
   \kappa(k, l) &=& \int_{\mathbb{T}^n}e^{2\pi il\cdot x}\sigma(k, x)\text{d}x\\
   &=& \int_{\mathbb{T}^n}\Bigg(\frac{(1 - \mathcal{L}_x)^N}{\big(1 + 4\pi ^{2} |l|^{2}\big)^N}e^{2\pi il\cdot x}\Bigg)\sigma(k, x)\text{d}x\\
   &=& (1 + 4\pi ^{2} |l|^{2})^{-N}\int_{\mathbb{T}^n}e^{2\pi i l\cdot x}\big(1 - \mathcal{L}_x\big)^N\sigma(k, x)\text{d}x.
   \end{eqnarray*}
     Therefore,  for all $N\geq 0$ we have
   \[|\kappa(k,l)|\leq  C_N (1 + 4\pi ^{2} |l|^{2})^{-N}(1+|k|)^{\mu + 2N\delta}.\]
   It follows then from \eqref{EQ:kernels} that $K(k,m)$ satisfies \eqref{EQ:kernelsprops}.
       \end{proof}
       
      Similar to the classical cases, we have the formula extracting the symbol from an operator.
       
        \begin{proposition}\label{PROP:symbols}
  The symbol of a pseudo-difference operator $A$ 
  is given by 
  \begin{equation}\label{B}
   \sigma(k,x) = e^{-2\pi i k\cdot x}Ae_x(k),
   \end{equation}
   where $e_x (k) = e^{2\pi i k\cdot x},$ for all $k\in\Zn$ and $x\in\Tn$.
  \end{proposition}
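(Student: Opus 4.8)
The plan is to verify the identity \eqref{B} by directly applying the operator $A$ to the exponential function $e_x$ and comparing with the defining quantization formula \eqref{neil2}. The key observation is that $\Op(\sigma)$ acts on $e_x$ in a particularly simple way, so I would start by computing $\Op(\sigma)e_x(k)$ for a fixed $x\in\Tn$.

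First I would compute the Fourier transform of the function $e_x$. Since $e_x(m)=e^{2\pi i m\cdot x}$ is the pure frequency at $x$, its Fourier transform $\widehat{e_x}$ is the Dirac delta $\delta_x$ on $\Tn$ (concentrated at the point $x$), interpreted via the distributional pairing as in \eqref{EQ:ddif}. More carefully, for $\varphi\in\ell^1(\Zn)$ one has $\widehat{e_x}(y)=\sum_{m\in\Zn}e^{-2\pi i m\cdot(y-x)}=\delta(y-x)$ in the sense of distributions on $\Tn$. Substituting this into \eqref{neil2} and using the sifting property of the delta,
\begin{equation*}
\Op(\sigma)e_x(k)=\int_{\Tn}e^{2\pi i k\cdot y}\sigma(k,y)\,\delta(y-x)\,\text{d}y=e^{2\pi i k\cdot x}\sigma(k,x).
\end{equation*}
Multiplying both sides by $e^{-2\pi i k\cdot x}$ yields exactly \eqref{B}.

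The main point requiring care is making the substitution $\widehat{e_x}=\delta_x$ rigorous, since $e_x\notin\ell^2(\Zn)$ and is not a Schwartz function; thus $\widehat{e_x}$ lives only in $\mathcal{D}'(\Tn)$ and the integral in \eqref{neil2} must be read as a distributional pairing on $\Tn$ against the test function $y\mapsto e^{2\pi i k\cdot y}\sigma(k,y)$, which is legitimate because $\sigma(k,\cdot)\in C^\infty(\Tn)$ for each fixed $k$. An alternative, avoiding distributions entirely, is to invoke the closing computation of the Introduction: the excerpt already shows that any continuous linear $A\colon\ell^\infty(\Zn)\to\mathcal{S}'(\Zn)$ satisfies $Af(k)=\Op(\sigma)f(k)$ with $\sigma(k,x)=e^{-2\pi i k\cdot x}Ae_x(k)$, derived by applying $A$ term-by-term under the Fourier inversion integral \eqref{EQ:Finv}. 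So the cleanest route is simply to apply the defining formula \eqref{neil2} for $A=\Op(\sigma)$ to the function $f=e_x$ and read off the symbol.

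I expect the only genuine obstacle to be the justification of interchanging the operator $A$ with the integral (equivalently, the validity of the distributional pairing), which hinges on the continuity of $A$ and the smoothness of $\sigma(k,\cdot)$; the algebraic part collapses immediately once the Fourier transform of $e_x$ is identified. No further properties of the symbol class beyond $\sigma(k,\cdot)\in C^\infty(\Tn)$ are needed.
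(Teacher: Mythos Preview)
Your proposal is correct and follows essentially the same approach as the paper: both compute $\Op(\sigma)e_x(k)$ via the quantization formula and show it equals $e^{2\pi i k\cdot x}\sigma(k,x)$. The only cosmetic difference is that you identify $\widehat{e_x}=\delta_x$ directly and invoke the sifting property, whereas the paper writes the Fourier transform of $e_y$ as the formal sum $\sum_l e^{-2\pi i l\cdot(x-y)}$, recognises the resulting $x$-integral as the toroidal Fourier coefficient $\widehat{\sigma}(k,l-k)$, and then applies the toroidal Fourier inversion formula to recover $\sigma(k,y)$; these are two presentations of the same distributional computation.
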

  
  \begin{proof}
  For the function $e_y (l) = e^{2\pi i l\cdot y}$, its Fourier transform is given formally by 
   \[
   \widehat{ e_y} (x)  = \sum_{l \in \mathbb{Z}^n} e^{-2\pi i l\cdot x}e^{2\pi i l\cdot y},
   \]
with the usual justification in terms of limits or distributions.
 Plugging this into the formula  
    \begin{equation*}
   \text{Op}(\sigma)f(k) = \int_{\mathbb{T}^n} e^{2\pi i k\cdot x} \sigma(k,x) \widehat{f}(x)\text{d}x,
   \end{equation*}
      it follows that 
   \begin{eqnarray*}
   \text{Op}(\sigma)e_y(k) &=& \int_{\mathbb{T}^n} \sum_{l \in \mathbb{Z}^n}e^{2\pi i k\cdot x} \sigma(k,x) e^{-2\pi i l\cdot x}e^{2\pi i l\cdot y}\text{d}x\\
   &=& \int_{\mathbb{T}^n} \sum_{l \in \mathbb{Z}^n}e^{-2\pi i (l-k)\cdot x} \sigma(k,x) e^{2\pi i l\cdot y} \text{d}x\\
   &=&  \sum_{l \in \mathbb{Z}^n}\widehat{ \sigma}(k,l-k) e^{2\pi i l\cdot y} \\
   &=&  \sum_{m \in \mathbb{Z}^n}\widehat{ \sigma}(k,m)e^{2\pi i m\cdot y} e^{2\pi i k\cdot y}  \qquad (\text{where} \ l-k=m)\\
   &=& \sigma(k,y)e^{2\pi i k\cdot y},
   \end{eqnarray*}
   where $\widehat{\sigma}$ stands for the Fourier transform on $\Tn$ in the second variable, and where we used the toroidal Fourier inversion formula by a standard distributional interpretation.
   This gives formula \eqref{B}.
  \end{proof}

   From the definition \eqref{neil} of pseudo-differential operators and writing out the Fourier transform of $f$ using formula \eqref{gael1} gives the amplitude representation of pseudo-difference operators as    
   \begin{equation}\label{3}
   \text{Op}(\sigma)f(k) = \sum_{m \in \mathbb{Z}^n}\int_{\mathbb{T}^n}e^{2\pi i(k-m)\cdot x}\sigma(k,x) f(m)\text{d}x.
   \end{equation}
   
   This motivates analysing amplitude operators of the form
     \begin{equation}\label{AMP}
   Af(k) = \sum_{m \in \mathbb{Z}^n} \int_{\mathbb{T}^n} e^{2\pi i (k-m)\cdot x} a(k,m,x)f(m)\text{d}x,
   \end{equation}
with amplitudes $a:\Zn\times\Zn\times\Tn\to\C$. We may still denote such operators by $\Op(a)$, which is consistent with \eqref{3}. 
   
     \begin{defn}[Amplitude classes $\mathcal{A}^{\mu_1, \mu_2}_{\rho, \delta} (\mathbb{Z}^n \times \mathbb{T}^n)$]
     Let $\rho,\delta\in\R$.
   A function $a:\Zn\times\Zn\times\Tn\to\C$ is said to belong to the amplitude class 
   $\mathcal{A}^{\mu_1,\mu_2}_{\rho,\delta} (\mathbb{Z}^n \times \mathbb{Z}^n \times \mathbb{T}^n) $  if $a(k,m, \cdot) \in C^\infty (  \mathbb{T}^n) $ for all $k,m \in \mathbb{Z}^n$,  and if  for all multi-indices $\alpha,\beta,\gamma$ there exists a positive constant $C_{\alpha,\beta ,\gamma}>0$ such that for some $J\in\mathbb{N}_0$ with $J\leq |\gamma|$ we have
   \begin{equation}\label{EQ:amps}
|D^{(\gamma)}_{y}\Delta^{\alpha}_{k}\Delta^{\beta}_{m} a(k,m,y)| \leq C_{\alpha,\beta,\gamma}(1 +|k |)^{\mu_1-\rho|\alpha| + \delta J}(1 +|m |)^{\mu_2-\rho|\beta| + \delta(|\gamma|-J)}.
\end{equation}  
   We note that clearly $\displaystyle S_{\rho, \delta}^{\mu} ( \mathbb{Z}^n \times \mathbb{T}^n) \subset \mathcal{A}^{\mu,0}_{\rho, \delta} (\mathbb{Z}^n \times \mathbb{Z}^n \times \mathbb{T}^n).$ 
   The space of amplitude operators $\Op(a)$ with amplitudes $a\in \mathcal{A}^{\mu_1,\mu_2}_{\rho,\delta} (\mathbb{Z}^n \times \mathbb{Z}^n \times \mathbb{T}^n) $ will be denoted by 
   $\Op(\mathcal{A}^{\mu_1,\mu_2}_{\rho,\delta} (\mathbb{Z}^n \times \mathbb{Z}^n \times \mathbb{T}^n) ).$
   \end{defn}
The definition above is motivated by properties of symbols in Definition \ref{DEF:symbolclasses}, by the property that the amplitude of the operator adjoint to $\Op(\sigma)$ will be given by 
$a(k,m,x)=\overline{\sigma(m,x)}$, and in order to have Theorem \ref{THM:amplitudes}.
Here, for the inclusion $\displaystyle S_{\rho, \delta}^{\mu} ( \mathbb{Z}^n \times \mathbb{T}^n) \subset \mathcal{A}^{\mu,0}_{\rho, \delta} (\mathbb{Z}^n \times \mathbb{Z}^n \times \mathbb{T}^n)$ we may take $J=|\gamma|$ in \eqref{EQ:amps}, while for the amplitude $a(k,m,x)=\overline{\sigma(m,x)}$ we may take $J=0$.

   We now aim to show that  $$\Op(\mathcal{A}^{\mu_1,\mu_2}_{\rho,\delta} (\mathbb{Z}^n \times \mathbb{Z}^n \times \mathbb{T}^n) )\subset \Op(S^{\mu_1+\mu_2}_{\rho, \delta}(\mathbb{Z}^n\times \mathbb{T}^n)).$$ For this, we establish a useful property of more general difference operators. 
      
       \begin{defn}[Generalised difference operators]\label{DEF:gdiff} Let $q\in C^\infty(\Tn)$. Then for $\tau:\Zn\to\C$ we 
          define the $q$-difference operator by 
          \begin{equation}\label{deltaoperator12}
          \Delta_{q}\tau(k):= \int_{\mathbb{T}^n}e^{2\pi i k\cdot x}q(x)\widehat{\tau}(x)\text{d}x.
          \end{equation}
          While the integral formula above makes sense for suitable functions $\tau$, similar to
          \eqref{EQ:ddif} it can be extended to all $\tau\in \mathcal{S}'(\Zn)$ by the distributional duality
\begin{equation}\label{EQ:ddif2}
 \Delta_q \tau (k)=\langle \widehat{\tau},e^{2\pi i k\cdot x} q(x)\rangle
\end{equation} 
    acting on the $x$-variable. At the same time, formula \eqref{deltaoperator12} also extends to non-smooth functions $q$: for example, \eqref{deltaoperator12} makes sense for $\tau\in\ell^2(\Zn)$ and $q\in L^2(\Tn)$, or for other choices of matching conditions on $\tau$ and $q$, for \eqref{deltaoperator12} to make sense.
          \end{defn}
      Expanding $\widehat{\tau}(x)$ we can also note the useful formula 
          \begin{equation}\label{deltaoperator2}
           \Delta_{q}\tau(k)= \sum_{l\in \mathbb{Z}^n}\int_{\mathbb{T}^n}e^{2\pi i (k-l)\cdot x}q(x)\tau(l)\text{d}x= \sum_{l\in \mathbb{Z}^n} \tau(l) \mathcal{F}_{\Zn}^{-1}q(k-l)=(\tau*\mathcal{F}_{\Zn}^{-1}q)(k).
           \end{equation}
         
    We record the following property of generalised difference operators acting on symbols.
    
   \begin{lemma}\label{lemma1}
   Let $0\leq \delta\leq 1$ and let $\sigma\in S_{\rho,\delta}^{\mu}(\mathbb{Z}^n\times \mathbb{T}^n)$, $\mu\in\R$. Then for any $q\in C^{\infty}(\mathbb{T}^n)$ and any $\beta\in \mathbb{N}_0^{n} $ we have  
   \begin{equation}\label{EQ:lemma1}
   |\Delta_{q}D_{x}^{(\beta)}\sigma(k, x)|\leq  C_{q,\beta}(1+|k|)^{\mu +\delta|\beta| }, 
   \end{equation}
  for all $k\in \mathbb{Z}^n$ and $x\in \mathbb{T}^n$.
   \end{lemma}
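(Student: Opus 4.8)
The plan is to sidestep the fact that $\Delta_q$ is built from a general smooth multiplier $q$ — rather than from the elementary multipliers $(e^{2\pi iy}-1)^\alpha$ underlying the $\Delta^\alpha$ for which \eqref{EQ:symbdef} is phrased — by using the convolution representation \eqref{deltaoperator2} of $\Delta_q$ directly. Since $\Delta_q$ acts only in the lattice variable $k$ while $D_x^{(\beta)}$ acts only in $x$, the two commute, and for each fixed $x\in\Tn$ I would apply $\Delta_q$ to the sequence $l\mapsto D_x^{(\beta)}\sigma(l,x)$. Writing $h_q:=\mathcal{F}_{\Zn}^{-1}q$, formula \eqref{deltaoperator2} gives
\[
\Delta_q D_x^{(\beta)}\sigma(k,x)=\sum_{l\in\Zn}D_x^{(\beta)}\sigma(l,x)\,h_q(k-l),
\]
and since $q\in C^\infty(\Tn)$ its inverse Fourier coefficients satisfy $h_q\in\mathcal{S}(\Zn)$, so the series converges absolutely against the polynomially bounded sequence $D_x^{(\beta)}\sigma(\cdot,x)$.

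The key estimate then uses only the $\alpha=0$ case of \eqref{EQ:symbdef}, namely $|D_x^{(\beta)}\sigma(l,x)|\le C_{0,\beta}(1+|l|)^{\mu+\delta|\beta|}$, which holds uniformly in $x$. Setting $s:=\mu+\delta|\beta|$ and invoking Peetre's inequality $(1+|l|)^{s}\le 2^{|s|}(1+|k|)^{s}(1+|k-l|)^{|s|}$, I would bound
\[
|\Delta_q D_x^{(\beta)}\sigma(k,x)|\le C_{0,\beta}\,2^{|s|}(1+|k|)^{s}\sum_{l\in\Zn}(1+|k-l|)^{|s|}\,|h_q(k-l)|.
\]
After reindexing $j=k-l$ the remaining sum equals $\sum_{j\in\Zn}(1+|j|)^{|s|}|h_q(j)|$, which is finite because $h_q\in\mathcal{S}(\Zn)$ beats any polynomial. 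This yields \eqref{EQ:lemma1} with a constant $C_{q,\beta}$ depending only on $q$ and $\beta$, uniformly in $k$ and $x$. Conceptually this is exactly what one expects: a general $q$ with $q(0)\neq0$ behaves to leading order like the multiplication operator $q(0)\cdot I$, so $\Delta_q$ preserves the order of the symbol and gains no decay, which is precisely what \eqref{EQ:lemma1} asserts.

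The one point requiring care — and the only place the argument is not purely mechanical — is justifying that the distributional definition \eqref{EQ:ddif2} of $\Delta_q$, applied to the tempered sequence $D_x^{(\beta)}\sigma(\cdot,x)$, really coincides with the absolutely convergent series above, with all constants uniform in $x$; this follows from the rapid decay of $h_q$ together with the $x$-independence of the symbol seminorm $C_{0,\beta}$. An alternative route, closer to the toroidal calculus of \cite{RT-JFAA} and the periodic Taylor expansion \eqref{EQ:Taylor}, is to write $q(y)=\sum_{|\alpha|<N}c_\alpha(e^{2\pi iy}-1)^\alpha+\sum_{|\alpha|=N}(e^{2\pi iy}-1)^\alpha q_\alpha(y)$ with $q_\alpha\in C^\infty(\Tn)$, turning $\Delta_q$ into $\sum_{|\alpha|<N}c_\alpha\Delta^\alpha+\sum_{|\alpha|=N}\Delta_{q_\alpha}\Delta^\alpha$; one then estimates the first group by \eqref{EQ:symbdef} and the remainder by the order-preserving property of $\Delta_{q_\alpha}$ (itself the argument of the first two paragraphs). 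This second route connects the statement to the running hypotheses $0\le\delta\neq1$ of the surrounding calculus and would use $\rho\ge0$ to absorb the negative powers $-\rho|\alpha|$; it is worth noting, however, that the direct convolution argument above establishes \eqref{EQ:lemma1} for arbitrary $\rho,\delta\in\R$, since it invokes \eqref{EQ:symbdef} only at $\alpha=0$.
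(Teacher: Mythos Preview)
Your argument is correct and follows the same core strategy as the paper: represent $\Delta_q$ as convolution in the lattice variable against the inverse Fourier coefficients of $q$, then combine the symbol bound $|D_x^{(\beta)}\sigma(l,x)|\le C(1+|l|)^{\mu+\delta|\beta|}$ with the rapid decay of those coefficients. The paper's version carries out the integration-by-parts step explicitly to exhibit the $|k-l|^{-2M}$ decay, separates the diagonal term $l=k$, and splits into the cases $\mu\ge 0$ (triangle inequality) and $\mu<0$ (Peetre); your route is more economical in that you invoke $h_q\in\mathcal{S}(\Zn)$ directly and apply Peetre's inequality once for all $s=\mu+\delta|\beta|$, avoiding both splits. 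Your closing observation that only the $\alpha=0$ case of \eqref{EQ:symbdef} is used---so the estimate holds regardless of $\rho,\delta$---is also true of the paper's proof, though not stated there.
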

   \begin{proof}
   It is enough to prove this for $\beta = 0$. Using \eqref{deltaoperator2}, we write $ \Delta_q\sigma(k, x)$ as
   \begin{eqnarray*}
  \Delta_q\sigma(k, x) &=& \sum_{l \in \mathbb{Z}^n}\int_{\mathbb{T}^n}e^{2\pi i(k-l)\cdot y}q(y)\sigma(l, x)\text{d}y \\
   &=&  \sigma(k, x)\int_{\mathbb{T}^n}q(y)\text{d}y  +  \sum_{\substack{
   l\in \mathbb{Z}^n\\
   l\neq k}}\int_{\mathbb{T}^n}e^{2\pi i(k-l)\cdot y}q(y)\sigma(l, x)\text{d}y \\
   &=:& I_1 + I_2,
   \end{eqnarray*} 
   where in the first term we set $l=k$. Then we have
   $$|I_1|\leq  (1+|k|)^{\mu }.$$ 
   On the other hand, for $\mu\geq 0$, integrating by parts with the operator \eqref{EQ:Lapl}, we have
 \begin{equation}\label{EQ:seri}
\begin{aligned}
  |I_2| &= \left|\sum_{\substack{
   l\in \mathbb{Z}^n\\ l\neq k}}\int_{\mathbb{T}^n}\frac{e^{2\pi i(k-l)\cdot y}}{(2\pi)^{2M}|k-l|^{2M}}\Bigg(\mathcal{L}_{y}^{M}q(y)\Bigg)\sigma(l, x)\text{d}y\right| \\
   &\leq  C\sum_{\substack{
   l\in \mathbb{Z}^n\\
   l\neq k}}\frac{1}{|k-l|^{2M}}(1+|l|)^{\mu}\\
   &\leq  C\sum_{m\neq 0}\frac{1}{|m|^{2M}}(1+|k-m|)^{\mu}\\
   &\leq  C\sum_{m\neq 0}\frac{1}{|m|^{2M}}\Bigg((1+|k|)^{\mu } + |m|^{\mu} \Bigg)\\
   &\leq  C(1+|k|)^{\mu},
\end{aligned}
\end{equation} 
   where we used that $\mu \geq 0$ in the last lines and that
   if we take  $\displaystyle M > \frac{n + \mu}{2}$, then $2M - \mu > n,$ and the series in the last lines of \eqref{EQ:seri} converges.
   
   If $\mu< 0$, we will use the Peetre inequality which says that for all $s\in\R$ and $\xi,\eta\in\Rn$ we have
   $$
   (1+|\xi+\eta|)^s\leq 2^{|s|}(1+|\xi|)^s (1+|\eta|)^{|s|},
   $$
   see \cite[Proposition 3.3.31]{ruzhansky2009pseudo}. Applying this with $s=\mu$, we have
   $$
   (1+|k-m|)^\mu\leq 2^{|\mu|}(1+|k|)^\mu (1+|m|)^{|\mu|}.
   $$
   Applying this to the third line of \eqref{EQ:seri} we get that
   $$
   |I_2|\leq C(1+|k|)^{\mu},
   $$
   provided that we take $M$ such that $2M-|\mu|>n$, so that the series in $m$ converges.
   So we obtain \eqref{EQ:lemma1} in all the cases.
   \end{proof} 
   
   Before proving that amplitude operators are pseudo-difference operators and are given by symbols, let us recall the periodic Taylor expansion formula from \cite[Theorem 3.4.4]{ruzhansky2009pseudo}. It says that if $h\in C^\infty(\Tn)$ then we have the periodic Taylor expansion for $h$ given by
   \begin{equation}\label{EQ:Taylor}
h(x)=\sum_{|\alpha|<N} \frac{1}{\alpha !} (e^{2\pi i x}-1)^\alpha D_z^{(\alpha)} h(z)|_{z=0} +
\sum_{|\alpha|=N} h_\alpha(x) (e^{2\pi i x}-1)^\alpha,
\end{equation} 
where $h_\alpha\in C^\infty(\Tn)$, with $D_z^{(\alpha)}$ given by
\eqref{EQ:das}, and
$$
     (e^{2\pi i  x} -1 )^\alpha=(e^{2\pi i  x_1} -1 )^{\alpha_1}\cdots (e^{2\pi i  x_n} -1 )^{\alpha_n}.
     $$
The functions  $h_\alpha\in C^\infty(\Tn)$ are products of one-dimensional functions $h_j(y)$, $y\in\mathbb{T}$, defined inductively by $h_0=h$ and then by
      \begin{equation}\label{EQ:hjs}
h_{j + 1}(y):= \left\{ \begin{aligned}
         \frac{h_j(y) - h_j(0)}{e^{2\pi i  y} -1}  \ \ \text{if}\ \ y \neq 0, \\
          D_y h_j(y) \ \text{if} \ y = 0. \end{aligned}\right.
\end{equation} 
In particular, we have
      \[h_{1}(y) = \left\{ \begin{aligned}
      \frac{h(y) - h(0)}{e^{2\pi i  y} -1} \ \text{if}\ \ y \neq 0, \\
       D_y h(0) \ \text{if}  \ y = 0,\end{aligned}\right.\]
      \[h_{2}(y) = \left\{ \begin{aligned}
       & &\frac{h_1(y) - h_1(0)}{e^{2\pi i  y} -1} = \frac{\frac{h(y) - h(0)}{e^{2\pi i  y} -1} -  D_y h(0)}{e^{2\pi i  y} -1} \ \text{if}\ \ y \neq 0, \\
      & & D_y h_1(0) \ \text{if} \ y = 0,
       \end{aligned}\right.\]
 and so on. It can be shown that these functions are smooth everywhere, including at $y=0$, and that $h_j$ depends on the $j^{th}$ order derivative of $h$.     
We refer to \cite[Section 3.4]{ruzhansky2009pseudo} for the proof as well as for the expressions and analysis for the remainder functions $h_\alpha$.
   
   \begin{thm}\label{THM:amplitudes}
   Let $0\leq \delta<\rho \leq 1$. Let $a\in \mathcal{A}^{\mu_1,\mu_2}_{\rho,\delta} (\mathbb{Z}^n \times \mathbb{Z}^n \times \mathbb{T}^n)$ and let the corresponding amplitude operator $A$ be given by
   \begin{equation}\label{A}
   Af(k) = \sum_{l \in \mathbb{Z}^n} \int_{\mathbb{T}^n} e^{2\pi i (k-l)\cdot x} a(k,l,x)f(l)\text{\rm d}x.
   \end{equation}
   Then we have $A = \Op(\sigma_A)$ for some $\sigma_A \in S^{\mu_1+\mu_2}_{\rho,\delta} (\mathbb{Z}^n \times  \mathbb{T}^n) $. Moreover, $\sigma_A$ has the following asymptotic expansion
   \begin{equation}\label{EQ:expamp}
\sigma_A (k,x) \sim \sum_{\alpha } \frac{1}{\alpha !}\Delta_{l}^{\alpha} D_{x}^{(\alpha)} a (k,l,x)\Big|_{l=k},
\end{equation} 
   which means  that for all $N\in\mathbb{N}$ we have
    \begin{equation}\label{EQ:expamp2}
    \sigma_A - \sum_{|\alpha| < N } \frac{1}{\alpha !}\Delta_{l}^{\alpha} D_{x}^{(\alpha)} a (k,l,x)\Big|_{l=k}  \in  S^{\mu_1+\mu_2 -N(\rho-\delta)}_{\rho,\delta} (\mathbb{Z}^n \times  \mathbb{T}^n).
    \end{equation} 
   \end{thm}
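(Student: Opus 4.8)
The plan is to extract the symbol of $A$ by the formula \eqref{B} of Proposition \ref{PROP:symbols}, namely $\sigma_A(k,x) = e^{-2\pi i k\cdot x} A e_x(k)$, and then to read off the asymptotic expansion from a periodic Taylor expansion of the amplitude in the torus variable. Applying $A$ to $e_x(l)=e^{2\pi i l\cdot x}$ and simplifying gives the closed formula
\begin{equation*}
\sigma_A(k,x) = \sum_{l\in\Zn}\int_{\Tn} e^{2\pi i(k-l)\cdot(y-x)}\, a(k,l,y)\,\mathrm{d}y .
\end{equation*}
The convergence of this series, and the legitimacy of interchanging sum and integral, is handled exactly as in the proof of Theorem \ref{THM:kernel}: integrating by parts in $y$ with the identity \eqref{EQ:Lapl} produces arbitrary decay $(1+|k-l|)^{-2M}$, which together with the growth bound $(1+|l|)^{\mu_2}$ coming from \eqref{EQ:amps} makes the sum over $l$ absolutely convergent after applying the Peetre inequality.

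First I would insert into $a(k,l,\cdot)$ the periodic Taylor expansion \eqref{EQ:Taylor} about the point $y=x$, that is, apply \eqref{EQ:Taylor} to $z\mapsto a(k,l,x+z)$ at $z=0$ up to order $N$. This writes $a(k,l,y)$ as $\sum_{|\alpha|<N}\frac{1}{\alpha !}(e^{2\pi i(y-x)}-1)^\alpha D_x^{(\alpha)}a(k,l,x)$ plus a remainder built from the smooth Taylor remainder functions $a_\alpha$ carrying the factor $(e^{2\pi i(y-x)}-1)^\alpha$ with $|\alpha|=N$. Substituting and integrating term by term, the key computation is the torus integral $\int_{\Tn} e^{2\pi i(k-l)\cdot z}(e^{2\pi i z}-1)^\alpha\,\mathrm{d}z$, which by orthogonality of the exponentials equals $(-1)^{|\alpha-(l-k)|}\binom{\alpha}{l-k}$, supported on $0\le l-k\le\alpha$. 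Summing this against $D_x^{(\alpha)}a(k,l,x)$ over $l$ reproduces precisely the forward difference $\Delta_l^\alpha\big[D_x^{(\alpha)}a(k,l,x)\big]\big|_{l=k}$, so the finite part is exactly $\sum_{|\alpha|<N}\frac{1}{\alpha !}\Delta_l^\alpha D_x^{(\alpha)}a(k,l,x)|_{l=k}$, matching \eqref{EQ:expamp}.

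It then remains to show that the remainder $R_N$ belongs to $S^{\mu_1+\mu_2-N(\rho-\delta)}_{\rho,\delta}(\Zn\times\Tn)$, and, so that the expansion is genuinely asymptotic, that each leading term $\frac{1}{\alpha !}\Delta_l^\alpha D_x^{(\alpha)}a(k,l,x)|_{l=k}$ lies in $S^{\mu_1+\mu_2-|\alpha|(\rho-\delta)}_{\rho,\delta}$. For the latter one applies \eqref{EQ:amps} with an $l$-difference of order $\alpha$ and $|\alpha|$ torus derivatives (taking $J=|\alpha|$), which after setting $l=k$ gives the power $\mu_1+\mu_2-\rho|\alpha|+\delta|\alpha|=\mu_1+\mu_2-|\alpha|(\rho-\delta)$. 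For the remainder I would recognise the sum over $l$ of $\int_{\Tn}e^{2\pi i(k-l)\cdot z}(e^{2\pi i z}-1)^\alpha a_\alpha(k,l,x,x+z)\,\mathrm{d}z$ as a generalised difference operator of the type \eqref{deltaoperator2} acting in the $l$-variable with $q(z)=(e^{2\pi i z}-1)^\alpha$: the $N$ factors $(e^{2\pi i z}-1)$ behave as $N$ lattice differences, gaining $(1+|k|)^{-\rho N}$, while the torus derivatives of $a_\alpha$ needed to integrate by parts in $z$ (to gain decay in $|k-l|$) cost at most $(1+|k|)^{\delta N}$; Lemma \ref{lemma1} (the estimate \eqref{EQ:lemma1}) and the Peetre inequality then control the summation over $l$ and yield the net order $\mu_1+\mu_2-N(\rho-\delta)$.

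The main obstacle is the remainder estimate, where two points deserve care. First, the power counting: one must track how the $|\alpha|=N$ factors $(e^{2\pi i(y-x)}-1)$ convert into a gain of $\rho N$ in the lattice order while the accompanying torus derivatives cost only $\delta N$, and this is precisely where the hypothesis $0\le\delta<\rho\le 1$ enters, guaranteeing a strictly positive gain $\rho-\delta$ per order and hence a genuine asymptotic expansion. Second, one must verify the full symbol seminorms of $R_N$ and of the leading terms, i.e. estimate $\Delta_k^{\alpha'}D_x^{(\beta')}$ applied to expressions evaluated on the diagonal $l=k$; this requires a discrete Leibniz/product rule expressing $\Delta_k^{\alpha'}\big[F(k,k)\big]$ through the mixed differences $\Delta_k\Delta_l F(k,l)|_{l=k}$, available from the difference calculus of \cite{RT-JFAA}, together with repeated use of Lemma \ref{lemma1} to keep the resulting generalised differences inside the symbol classes. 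The $D_x^{(\beta')}$-derivatives commute harmlessly through the construction, so the essential difficulty is entirely in the $l$-summation and in the diagonal restriction under lattice differences.
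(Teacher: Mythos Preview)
Your approach is essentially the paper's: extract $\sigma_A$ via Proposition~\ref{PROP:symbols}, periodic-Taylor expand in the torus variable, and identify the leading terms as $\Delta_l^\alpha D_x^{(\alpha)} a|_{l=k}$. The paper carries out the same steps in a slightly different order: it first performs the sum over $l$, writing $\sigma_A(k,x)=\int_{\Tn}e^{2\pi i k\cdot y}\,\widehat{a}(k,y,y+x)\,\mathrm{d}y$ with $\widehat{a}$ the Fourier transform in the second variable, and only then Taylor expands $\widehat{a}(k,y,y+x)$ in the third argument at $x$. This reordering is immaterial for the leading terms (your explicit binomial computation of $\int_{\Tn}e^{2\pi i(k-l)\cdot z}(e^{2\pi i z}-1)^\alpha\,\mathrm{d}z$ is a nice alternative to the Fourier-inversion argument) but it matters for the remainder.

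Your identification of the remainder as ``a generalised difference operator of the type \eqref{deltaoperator2} with $q(z)=(e^{2\pi i z}-1)^\alpha$'' is not quite right: the remaining factor $a_\alpha(k,l,x,x+z)$ still depends on the integration variable $z$, so the expression is not of the product form $q(z)\widehat{\tau}(z)$ (equivalently $\tau(l)\cdot\Fzn^{-1}q(k-l)$) that \eqref{deltaoperator2} and Lemma~\ref{lemma1} require. If you try to integrate by parts in $z$ with the $z$-dependence of $a_\alpha$ still present, each derivative costs a $\delta$ power and the mechanism by which the factor $(e^{2\pi i z}-1)^\alpha$ produces the $\rho N$ gain becomes obscure. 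In the paper's ordering the sum over $l$ has already been executed, so the remainder is genuinely of the form $\int_{\Tn}e^{2\pi i k\cdot y}(e^{2\pi i y}-1)^\alpha\,a_j(y)\,D_x^{(\alpha_0)}\widehat{a}(k,y,x)\,\mathrm{d}y$ with $a_j\in C^\infty(\Tn)$ coming from the Taylor remainder; this \emph{is} $\Delta_{a_j}\Delta_l^\alpha D_x^{(\alpha_0)}a(k,l,x)|_{l=k}$, to which Lemma~\ref{lemma1} applies directly and delivers the order $\mu_1+\mu_2-(\rho-\delta)N$ cleanly. Your sketch can be fixed the same way (do the $l$-sum before estimating), or equivalently by first expanding $(e^{2\pi i z}-1)^\alpha$ binomially and shifting the $l$-summation so that a genuine $\Delta_l^\alpha$ acts on $a_\alpha(k,l,x,z)$ before any integration by parts in $z$.
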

   
   From now on we will always understand asymptotic sums of type \eqref{EQ:expamp} in the sense of \eqref{EQ:expamp2}.
   
   \begin{proof}[Proof of Theorem \ref{THM:amplitudes}]
   Using formula \eqref{B} from Proposition \ref{PROP:symbols}, the symbol of the operator $A$ in \eqref{A} is given by
   \begin{eqnarray*}
   \sigma_A (k,x) &=& e^{-2\pi i k\cdot x}\sum_{l \in \mathbb{Z}^n}\int_{\mathbb{T}^n} e^{2\pi i (k-l)\cdot y} a(k,l,y) e^{2\pi i l\cdot x} \text{d}y\\
   &=& \sum_{l \in \mathbb{Z}^n}\int_{\mathbb{T}^n} e^{2\pi i (k-l)\cdot (y-x)} a(k,l,y)\text{d}y\\
   &=& \int_{\mathbb{T}^n} e^{2\pi i k\cdot (y-x)} \widehat{ a}(k,y-x,y)\text{d}y,
   \end{eqnarray*}
  where $\widehat{a}$ denotes the Fourier transform of $a$ with respect to the second variable.
  Replacing $y$ by $y+x$, we obtain
   \begin{equation}
   \label{C}
   \sigma_A (k,x)  = \int_{\mathbb{T}^n} e^{2\pi i k\cdot y} \widehat{ a}(k,y,y+x)\text{d}y.
   \end{equation}
   Taking the Taylor expansion of $\widehat{ a}(k,y,y+x)$ in the third variable at $x$ as in \eqref{EQ:Taylor}, we have
   \begin{equation}
   \label{D}
   \widehat{a}(k,y,x+y) = \sum_{|\alpha| \leq  N} \frac{1}{\alpha !}(e^{2\pi i  y} -1 )^\alpha D_{x}^{(\alpha)} \widehat{a}(k,y,x) + R_0, 
   \end{equation}
   where $R_0$ is a remainder that we will study later.
   Substituting \eqref{D} into \eqref{C} gives 
   \begin{equation}
   \label{E}
   \sigma_A (k,x)  =\int_{\mathbb{T}^n} e^{2\pi i k\cdot y} \sum_{|\alpha| \leq N} \frac{1}{\alpha !}(e^{2\pi i  y} -1 )^\alpha D_{x}^{(\alpha)} \widehat{a}(k,y,x)\text{d}y + R,
   \end{equation}
   with the remainder $R$ that can be expressed in terms of $R_0$.
  Since by \eqref{EQ:diffs} we have
  
   \[\Delta^{\alpha}_{l} \tau (k)=\int_{\mathbb{T}^n} e^{2\pi i k\cdot y} (e^{2\pi i  y} -1 )^\alpha  \widehat{\tau}(y)\text{d}y, \]
  the formula \eqref{E} becomes
   \[ \sigma_A (k,x) = \sum_{|\alpha|\leq  N} \frac{1}{\alpha !}\Delta^{\alpha}_{l} D_{x}^{(\alpha)} a (k,l,x)\Big|_{l=k} + R,\] 
   giving the terms in the sum in \eqref{EQ:expamp}.

  Let us now analyse the remainder $R$. It is the sum of terms of the form
   
   \[R_{j}(k,x) = \int_{\mathbb{T}^n} e^{2\pi i k\cdot y} (e^{2\pi i  y} -1 )^\alpha  b_j(k,y,x)\text{d}y,  \]
   with  $|\alpha| = N$ and some $b_j$ containing the terms which are combination of functions
   \[ D_{x}^{\alpha_0}\mathcal{F}_2   a(k,y,x) \ \ \  \ \text{for } \ |\alpha_0|\leq N,\]
   where $\mathcal{F}_2$ means the Fourier transform with respect to the second variable, multiplied by some smooth functions, using the expressions \eqref{EQ:hjs}.
    It follows that for any $\beta$ the functions $D_{x}^{(\beta)} R_j(k,x)$ are the sums of terms of the form 
    \begin{multline*}
    \int_{\mathbb{T}^n} e^{2\pi i k\cdot y} a_j(y) (e^{2\pi i  y} -1 )^\alpha  D_{x}^{(\beta)}D_{x}^{(\alpha_0)}\mathcal{F}_2 a(k,y,x) \text{d}y \\
    =\Delta_{a_j}\Delta_{l}^{\alpha} D_{x}^{(\alpha_0 + \beta)}a(k,l,x) \Big|_{l=k},
    \end{multline*}
   for some smooth functions $a_j\in C^\infty(\Tn)$. Since 
   $\displaystyle a \in\mathcal{A}^{\mu_1,\mu_2}_{\rho,\delta} $,
   by Lemma \ref{lemma1} we obtain that $\displaystyle  R_j(k,x)$ satisfies
   the estimate
   \[|R_j(k,x)| \leq C(1 +|k|)^{\mu_1  } (1 +|k|)^{\mu_2 -\rho|\alpha| + \delta|\alpha_0| + \delta|\beta|},\] 
   for any value of $J$ in \eqref{EQ:amps}.
       Using that $|\alpha| = N$ and $|\alpha_0|\leq N$
  we get that  
   \[|R_j(k,x)| \leq C(1 +|k|)^{\mu_1+\mu_2 -(\rho -\delta)N + \delta|\beta|  }.\]
   Also, for the terms $\Delta^{\beta}_{k} R_j(k,x)$ we can express them as  sums of terms of the form
 $$
   \int_{\mathbb{T}^n} e^{2\pi i k\cdot z}  (e^{2\pi i  z} -1 )^\beta a_j(z)(e^{2\pi i z} - 1)^{\alpha} b_j(k,z,x) \text{d}z,
 $$
   with similar $b_j$ and $a_j$ as above. An argument similar to the one above shows the estimate
   \[|\Delta^{\beta}_{k} R_j(k,x)| \leq C(1 +|k|)^{\mu_1 +\mu_2 -\rho |\beta|  -(\rho-\delta)N }.\]
 By choosing $N$ large enough, arguments like in the classical pseudo-differential calculus imply that we have \eqref{EQ:expamp} .
 \end{proof}

   \section{Symbolic calculus}
\label{SEC:calculus}
   
         In this section we develop elements of the symbolic calculus of pseudo-differential operators on $\Zn$ by deriving formulae for the composition, adjoint, transpose, as well as for the parametrix for elliptic operators.
         
     \begin{thm}[Composition formula]\label{THM:comp}
  Let $0\leq \delta<\rho\leq 1.$ Let $\sigma\in S^{\mu_1}_{\rho, \delta}(\mathbb{Z}^n\times\mathbb T^n)$ and $\tau\in S^{\mu_2}_{\rho, \delta}(\mathbb{Z}^n\times\mathbb T^n)$. Then the composition $\rm{Op}(\sigma)\circ \rm{Op}(\tau)$ is a pseudo-differential operator with symbol $\varsigma\in S^{\mu_1 +\mu_2}_{\rho, \delta}(\mathbb{Z}^n\times\mathbb T^n)$, which can be given as an asymptotic sum
  \begin{equation}\label{EQ:comp}
  \varsigma(k, x)\sim  \sum_{\alpha} \frac{1}{\alpha !} D_{x}^{(\alpha)} \sigma(k, x)\Delta_{k}^{\alpha}\tau(k,x).
  \end{equation}
  \end{thm}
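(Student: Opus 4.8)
The plan is to read off the symbol $\varsigma$ of the composition directly from the operator by means of the extraction formula \eqref{B} of Proposition \ref{PROP:symbols}, $\varsigma(k,x)=e^{-2\pi i k\cdot x}\,\Op(\sigma)\Op(\tau)e_x(k)$, and then to Taylor expand in the frequency variable. First I would apply to $\tau$ the computation carried out in the proof of Proposition \ref{PROP:symbols}, which gives $\Op(\tau)e_x(k)=\tau(k,x)e^{2\pi i k\cdot x}=\tau(k,x)e_x(k)$. Writing $g_x(k):=\tau(k,x)e^{2\pi i k\cdot x}$ and computing its Fourier transform in the lattice variable yields $\widehat{g_x}(\eta)=(\mathcal F_1\tau)(\eta-x,x)$, where $\mathcal F_1$ denotes the Fourier transform with respect to the first (lattice) variable. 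Substituting into the definition \eqref{neil2} of $\Op(\sigma)$, multiplying by $e^{-2\pi i k\cdot x}$, and changing variables $\xi=\eta-x$ (legitimate since all factors are $\Tn$-periodic in the frequency), I arrive at the closed formula
\[
\varsigma(k,x)=\int_{\Tn}e^{2\pi i k\cdot\xi}\,\sigma(k,x+\xi)\,(\mathcal F_1\tau)(\xi,x)\,\text{d}\xi .
\]

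Next I would freeze $k$ and $x$ and apply the periodic Taylor expansion \eqref{EQ:Taylor} to the smooth function $\xi\mapsto\sigma(k,x+\xi)$ about $\xi=0$, obtaining
\[
\sigma(k,x+\xi)=\sum_{|\alpha|<N}\frac{1}{\alpha!}(e^{2\pi i\xi}-1)^\alpha D_x^{(\alpha)}\sigma(k,x)+\sum_{|\alpha|=N}\sigma_{k,x,\alpha}(\xi)(e^{2\pi i\xi}-1)^\alpha,
\]
with smooth remainder functions $\sigma_{k,x,\alpha}$ built from the $x$-derivatives of $\sigma$ through \eqref{EQ:hjs}. Inserting this into the formula for $\varsigma$ and recognising, directly from the definition \eqref{EQ:diffs}, that $\int_{\Tn}e^{2\pi i k\cdot\xi}(e^{2\pi i\xi}-1)^\alpha(\mathcal F_1\tau)(\xi,x)\,\text{d}\xi=\Delta_k^\alpha\tau(k,x)$, I obtain exactly the principal terms $\sum_{|\alpha|<N}\frac{1}{\alpha!}D_x^{(\alpha)}\sigma(k,x)\,\Delta_k^\alpha\tau(k,x)$ of \eqref{EQ:comp}, together with a remainder $R_N$. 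Each principal term lies in $S^{\mu_1+\mu_2-(\rho-\delta)|\alpha|}_{\rho,\delta}$, since $D_x^{(\alpha)}\sigma\in S^{\mu_1+\delta|\alpha|}_{\rho,\delta}$ and $\Delta_k^\alpha\tau\in S^{\mu_2-\rho|\alpha|}_{\rho,\delta}$; as $\rho-\delta>0$ the orders strictly decrease, so \eqref{EQ:comp} is a genuine asymptotic expansion.

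The main obstacle is the remainder estimate, namely showing $R_N\in S^{\mu_1+\mu_2-N(\rho-\delta)}_{\rho,\delta}$. Here $R_N$ is a finite sum, over $|\alpha|=N$, of integrals $\int_{\Tn}e^{2\pi i k\cdot\xi}\sigma_{k,x,\alpha}(\xi)(e^{2\pi i\xi}-1)^\alpha(\mathcal F_1\tau)(\xi,x)\,\text{d}\xi$. The factor $(e^{2\pi i\xi}-1)^\alpha$ against $\mathcal F_1\tau$ produces an $N$-th order difference $\Delta_k^\alpha$ acting on $\tau$, contributing a decay $(1+|k|)^{\mu_2-\rho N}$, while the Taylor remainder factors $\sigma_{k,x,\alpha}$ are controlled by $x$-derivatives of $\sigma$ of order at most $N$ and are therefore $O((1+|k|)^{\mu_1+\delta N})$; their product reproduces the gain $(1+|k|)^{-N(\rho-\delta)}$. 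I would make this rigorous exactly as in the proof of Theorem \ref{THM:amplitudes}: applying $D_x^{(\beta)}$ and $\Delta_k^\gamma$ to $R_N$ and using the discrete Leibniz rule, each resulting term is realised as a generalised difference operator $\Delta_q$, with $q\in C^\infty(\Tn)$ coming from the smooth factors in $\sigma_{k,x,\alpha}$, applied to derivatives and differences of $\sigma$ and $\tau$; Lemma \ref{lemma1} then applies, and this is precisely where the hypothesis $0\le\delta<\rho\le1$ (in particular $\delta\neq1$) enters. Tracking the exponents gives $|D_x^{(\beta)}\Delta_k^\gamma R_N|\le C(1+|k|)^{\mu_1+\mu_2-N(\rho-\delta)-\rho|\gamma|+\delta|\beta|}$, which is the bound required by \eqref{EQ:symbdef}. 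Finally, taking $N$ arbitrarily large and performing the standard Borel-type asymptotic summation yields a single symbol $\varsigma\in S^{\mu_1+\mu_2}_{\rho,\delta}$ satisfying \eqref{EQ:comp} in the asymptotic sense of \eqref{EQ:expamp2}, which completes the proof.
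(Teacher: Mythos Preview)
Your argument is correct and follows essentially the same route as the paper. You reach the same integral representation
\[
\varsigma(k,x)=\int_{\Tn}e^{2\pi i k\cdot\xi}\,\sigma(k,x+\xi)\,\widehat{\tau}(\xi,x)\,\text{d}\xi
\]
(the paper writes $\widehat{\tau}$ for your $\mathcal F_1\tau$), apply the periodic Taylor expansion \eqref{EQ:Taylor} to $\xi\mapsto\sigma(k,x+\xi)$, identify the main terms via \eqref{EQ:diffs}, and defer the remainder analysis to Lemma~\ref{lemma1} along the lines of Theorem~\ref{THM:amplitudes}; this is exactly the paper's proof. The only cosmetic difference is that the paper obtains the integral formula by writing out both operators in amplitude form \eqref{3} and composing directly, whereas you start from the extraction formula $\varsigma(k,x)=e^{-2\pi i k\cdot x}\Op(\sigma)\Op(\tau)e_x(k)$ of Proposition~\ref{PROP:symbols} and compute $\Op(\tau)e_x$ first---both computations are short and lead to the same place.
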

   
   We note that the order of taking differences and derivatives in \eqref{EQ:comp} changes in comparison to the analogous composition formulae on $\Rn$ and $\Tn$, see \cite{RT-JFAA,ruzhansky2009pseudo}.
   
       \begin{proof}[Proof of Theorem \ref{THM:comp}] 
   Let $f ,g \in \mathcal{S}(\mathbb{Z}^n)$. The pseudo-differential operators with symbols $\sigma $ and $\tau$ are given by
   \begin{equation}
   \label{9}
   \text{Op}(\sigma)f(k) = \sum_{m \in \mathbb{Z}^n}\int_{\mathbb{T}^n}e^{2\pi i(k-m)\cdot x}\sigma(k,x)f(m)\text{d}x,
   \end{equation}
   \begin{equation}
   \label{10}
   \text{Op}(\tau)g(m) = \sum_{l \in \mathbb{Z}^n}\int_{\mathbb{T}^n}e^{2\pi i(m-l)\cdot y}\tau(m,y) g(l)\text{d}y.
   \end{equation}
   The composition of \eqref{9} and \eqref{10} gives
   
   \begin{eqnarray*}
   \text{Op}(\sigma)\big(\text{Op}(\tau)g\big)(k) &=&  \sum_{l \in \mathbb{Z}^n}\sum_{m \in \mathbb{Z}^n}\int_{\mathbb{T}^n}\int_{\mathbb{T}^n}e^{2\pi i(k-m)\cdot x}\sigma(k,x) e^{2\pi i(m-l)\cdot y}\tau(m,y) g(l)\text{d}y\text{d}x\\
   &=& \sum_{l \in \mathbb{Z}^n} \int_{\mathbb{T}^n}e^{2\pi i(k-l)\cdot y}\varsigma(k,y)g(l) \text{d}y,
   \end{eqnarray*}
   where 
   \begin{eqnarray*}
   \varsigma(k,y) &=&\sum_{m \in \mathbb{Z}^n}\int_{\mathbb{T}^n} e^{2\pi i(k-m)\cdot(x- y)}\sigma(k,x) \tau(m,y)\text{d}x \\
   &=&\sum_{m \in \mathbb{Z}^n}\int_{\mathbb{T}^n}e^{2\pi ik\cdot(x- y)}e^{-2\pi im\cdot(x- y)}\sigma(k,x) \tau(m,y) \text{d}x \\
   &=& \int_{\mathbb{T}^n}e^{2\pi ik\cdot(x- y)}\sigma(k,x) \widehat{\tau}(x- y,y)\text{d}x  \\
   &=& \int_{\mathbb{T}^n} e^{2\pi ik\cdot x}\sigma(k,y + x) \widehat{\tau}(x,y) \text{d}x,
   \end{eqnarray*}
    where $\widehat{\tau}$ is the Fourier transform of $\tau(m, y)$ in the first variable.
 Using  the periodic Taylor expansion \eqref{EQ:Taylor}  we have
  \begin{eqnarray*}
   \varsigma(k,x) &=& \int_{\mathbb{T}^n} e^{2\pi ik\cdot y}\sigma(k,x + y) \widehat{\tau}(y,x) \text{d}y\\
   &=& \int_{\mathbb{T}^n} e^{2\pi ik\cdot y}  \sum_{|\alpha|< N} \frac{1}{\alpha !} (e^{2\pi i y}- 1)^{\alpha} D_{x}^{(\alpha)} \sigma(k, x) \widehat{\tau}(y,x) \text{d}y  + R,\\ 
   &=& \sum_{|\alpha|< N} \frac{1}{\alpha !} D_{x}^{(\alpha)} \sigma(k, x)\Delta_{k}^{\alpha}\tau(k,x) + R,
   \end{eqnarray*} 
  where $R$ is a remainder coming from the Taylor expansion formula. We observe that since the difference operators satisfy the Leibniz rule we have 
  $$D_{x}^{(\alpha)} \sigma(k, x)\Delta_{k}^{\alpha}\tau(k,x)\in S^{\mu_1+\mu_2-(\rho-\delta)|\alpha|}_{\rho,\delta}(\Zn\times\Tn).$$
  The remainder $R$ can be treated along the lines of the remainder treatment in the proof of Theorem \ref{THM:amplitudes}.
   \end{proof}
   
 The adjoint $T^{*}$ of the operator $T$ on $\ell^2(\Zn)$ is defined by
      \begin{equation}\label{adjoint}
     \big( Tf, g\big)_{\ell^2(\mathbb{Z}^n)} = \big( f, T^{*}g \big)_{\ell^2(\mathbb{Z}^n)}.
     \end{equation}
We now derive the asymptotic formula for its symbol.

  \begin{thm}[Adjoint operators]\label{THM:adjoint}
   Let $0\leq \delta<\rho\leq 1.$ Let $\sigma\in S^{\mu}_{\rho, \delta}(\mathbb{Z}^n\times\mathbb T^n)$. Then there exists a symbol $\sigma^*\in S^{\mu}_{\rho, \delta}(\mathbb{Z}^n\times\mathbb T^n)$ such that the adjoint operator $\rm{Op}(\sigma)^*$ is a pseudo-difference operator with symbol $\sigma^*$, i.e. $\rm{Op}(\sigma)^* = \rm{Op}(\sigma^*)$, and we have  
   \begin{equation}\label{EQ:adj}
\sigma^*(k,x)  \sim \sum_\alpha \frac{1}{\alpha !}\Delta_{k}^{\alpha} D_{x}^{(\alpha)}\overline{\sigma(k,x)}. 
\end{equation}   
\end{thm}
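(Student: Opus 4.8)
The plan is to reduce the computation of the adjoint's symbol to the amplitude calculus already established in Theorem \ref{THM:amplitudes}. The key observation is that the adjoint operator has a natural amplitude representation whose amplitude is simply the conjugate of the original symbol evaluated at a shifted spatial variable.

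First I would start from the defining relation \eqref{adjoint} and the amplitude representation \eqref{3} of $\Op(\sigma)$. Writing out $(\Op(\sigma)f,g)_{\ell^2(\Zn)}$ explicitly as
\begin{equation*}
\big(\Op(\sigma)f,g\big)_{\ell^2(\Zn)} = \sum_{k\in\Zn}\sum_{m\in\Zn}\int_{\Tn} e^{2\pi i(k-m)\cdot x}\sigma(k,x)f(m)\overline{g(k)}\,\text{d}x,
\end{equation*}
I would interchange the roles of $k$ and $m$ (relabelling the summation indices) and conjugate to identify $\Op(\sigma)^* g(k)$ as an operator acting on $g$. This yields the amplitude representation
\begin{equation*}
\Op(\sigma)^* g(k) = \sum_{m\in\Zn}\int_{\Tn} e^{2\pi i(k-m)\cdot x}\,\overline{\sigma(m,x)}\,g(m)\,\text{d}x,
\end{equation*}
so that $\Op(\sigma)^* = \Op(a)$ with amplitude $a(k,m,x) = \overline{\sigma(m,x)}$. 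This is precisely the amplitude singled out in the discussion following the definition of the amplitude classes, where one takes $J=0$ in \eqref{EQ:amps}; thus $a\in\mathcal{A}^{0,\mu}_{\rho,\delta}(\Zn\times\Zn\times\Tn)$ since the $k$-differences of $a$ vanish and the $m$-differences together with the $x$-derivatives fall on $\overline{\sigma}$, which inherits the estimates of $\sigma$ under complex conjugation.

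Next I would invoke Theorem \ref{THM:amplitudes} directly: with $\mu_1=0$, $\mu_2=\mu$, the amplitude operator $\Op(a)$ equals $\Op(\sigma^*)$ for some $\sigma^*\in S^{\mu}_{\rho,\delta}(\Zn\times\Tn)$, with the asymptotic expansion \eqref{EQ:expamp} reading
\begin{equation*}
\sigma^*(k,x)\sim\sum_{\alpha}\frac{1}{\alpha!}\Delta_l^{\alpha}D_x^{(\alpha)} a(k,l,x)\Big|_{l=k}
=\sum_{\alpha}\frac{1}{\alpha!}\Delta_k^{\alpha}D_x^{(\alpha)}\overline{\sigma(k,x)},
\end{equation*}
which is exactly \eqref{EQ:adj} after noting that $\Delta_l^{\alpha}$ acts on the variable $l$ appearing in $\overline{\sigma(l,x)}$ and is then set equal to $k$. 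The order hypothesis $0\leq\delta<\rho\leq 1$ is precisely what Theorem \ref{THM:amplitudes} requires, so it transfers without extra work.

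The only genuine point requiring care, rather than the main obstacle, is verifying that $a(k,m,x)=\overline{\sigma(m,x)}$ indeed lies in the amplitude class $\mathcal{A}^{0,\mu}_{\rho,\delta}$: one must check that complex conjugation preserves the symbol estimates \eqref{EQ:symbdef} and that, because $a$ is independent of $k$, the $\Delta_k^{\alpha}$ differences vanish for $|\alpha|>0$, allowing the factor $(1+|k|)^{0}$ with $J=0$. This is routine since $\Delta_k^{\alpha}$ and $D_x^{(\beta)}$ commute with conjugation up to harmless complex factors. I would also remark, as in the proof of Theorem \ref{THM:comp}, that the interchange of summation and integration and the distributional manipulations are justified for $f,g\in\mathcal{S}(\Zn)$ and then extended by density.
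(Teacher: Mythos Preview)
Your proposal is correct and follows essentially the same route as the paper's own proof: both identify $\Op(\sigma)^*$ as the amplitude operator with amplitude $a(k,m,x)=\overline{\sigma(m,x)}\in\mathcal{A}^{0,\mu}_{\rho,\delta}(\Zn\times\Zn\times\Tn)$ via the $\ell^2$-pairing, and then invoke Theorem~\ref{THM:amplitudes} to obtain $\sigma^*\in S^{\mu}_{\rho,\delta}$ together with the asymptotic expansion~\eqref{EQ:adj}. Your additional remarks on verifying the amplitude class membership (via $J=0$ and the independence of $a$ from $k$) and on justifying the interchange of sums and integrals for Schwartz data make explicit what the paper leaves implicit, but the argument is otherwise identical.
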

  \begin{proof}
  Since
  \[\big( \text{Op}(\sigma)f,g \big)_{\ell ^2(\mathbb{Z}^n)} = \big( f,\text{Op}(\sigma)^*g \big)_{\ell^2(\mathbb{Z}^n)}\]
  and
     \begin{eqnarray*}
   \big( \text{Op}(\sigma)f,g \big)_{\ell ^2(\mathbb{Z}^n)} &=& \sum_{k \in \mathbb{Z}^n}\sum_{l \in \mathbb{Z}^n}\int_{ \mathbb{T}^n} e^{2\pi i (k-l)\cdot y}\sigma(k,y)  f(l) \overline{g(k)} \text{d}y \\
 &=& \sum_{l \in \mathbb{Z}^n} f(l)\Bigg(\overline{ \sum_{k \in \mathbb{Z}^n} \int_{ \mathbb{T}^n}e^{-2\pi i (k-l)\cdot y} \overline{\sigma(k,y)} g(k)} \text{d}y\Bigg),
  \end{eqnarray*}
  we have  
\[ \text{Op}(\sigma)^* g(l) = \sum_{k \in \mathbb{Z}^n} \int_{ \mathbb{T}^n}e^{-2\pi i (k-l)\cdot y} \overline{\sigma(k,y)} g(k) \text{d}y.\]
  Swapping $k$ and $l$, we can write it as
  \[ \text{Op}(\sigma)^* g(k) = \sum_{k \in \mathbb{Z}^n} \int_{ \mathbb{T}^n}e^{2\pi i (k-l)\cdot y} \overline{\sigma(l,y)} g(l) \text{d}y,\]
  which is an amplitude operator with amplitude $a(k,l,y) = \overline{\sigma(l,y)}\in\mathcal{A}^{0,\mu}_{\rho,\delta}(\Zn\times\Zn\times\Tn)$.
  Then by Theorem \ref{THM:amplitudes} we have $\text{Op}(\sigma)^* = \text {Op}(\sigma^*)$ with
  \[\sigma^*(k,x)  \sim \sum_{\alpha} \frac{1}{\alpha !}\Delta_{l}^{\alpha} D_{x}^{(\alpha)}\overline{\sigma(l,x)}\Big|_{l=k},\]
  yielding \eqref{EQ:adj} and completing the proof.
  \end{proof}
  
           For $f, g\in \mathcal{S}(\mathbb{Z}^n)$ we recall that the transpose $T^t$ of a linear operator $T$ is given by the distributional duality
         \[\langle T^t f,g\rangle =\langle f,T g\rangle,\] 
    which means that for all $k\in \mathbb{Z}^n$ we have
      \[\sum_{k\in \mathbb{Z}^n}(T^t f)(k)g(k) = \sum_{k\in \mathbb{Z}^n}f(k)(Tg )(k). \]
   
   \begin{thm}[Transpose operators]
    Let $0\leq \delta<\rho\leq 1.$ Let $\sigma\in S^{\mu}_{\rho, \delta}(\mathbb{Z}^n\times\mathbb T^n)$. Then there exists a symbol $\sigma^t\in S^{\mu}_{\rho, \delta}(\mathbb{Z}^n\times\mathbb T^n)$ such that the transpose operator $\rm{Op}(\sigma)^t$ is a pseudo-differential operator with symbol $\sigma^t$, i.e. $\rm{Op}(\sigma)^t = \rm{Op}(\sigma^t)$, and we have  
      \begin{equation}\label{EQ:transpose}
   {\sigma^t}(k, x) \sim \sum_{\alpha}\frac{1}{\alpha!}\Delta_{k}^{\alpha}D_{x}^{(\alpha)}\sigma(k, -x).
   \end{equation}
   \end{thm}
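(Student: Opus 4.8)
The plan is to follow verbatim the strategy used for the adjoint in Theorem~\ref{THM:adjoint}: realise the transpose as an amplitude operator of the form \eqref{AMP} and then let Theorem~\ref{THM:amplitudes} produce both the symbol and its asymptotic expansion. First I would write out the defining duality $\sum_{k}f(k)(\Op(\sigma)g)(k)=\sum_{k}(\Op(\sigma)^t f)(k)\,g(k)$ using the amplitude representation \eqref{3} of $\Op(\sigma)g$. Reading off the coefficient of $g$ and relabelling the summation indices (so that the output variable is called $k$ and the summation variable $l$) gives
\begin{equation*}
(\Op(\sigma)^t f)(k) = \sum_{l\in\Zn}\int_{\Tn} e^{2\pi i(l-k)\cdot y}\,\sigma(l,y)\,f(l)\,\text{d}y .
\end{equation*}
This is the exact analogue of the intermediate formula in the adjoint proof, but with $\overline{\sigma(l,y)}$ replaced by $\sigma(l,y)$ and without conjugation.

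Next I would put this in the standard amplitude form. Since $e^{2\pi i(l-k)\cdot y}=e^{2\pi i(k-l)\cdot(-y)}$, the substitution $x=-y$, which preserves the Haar measure on $\Tn=\Rn/\Zn$, turns the expression into
\begin{equation*}
(\Op(\sigma)^t f)(k) = \sum_{l\in\Zn}\int_{\Tn} e^{2\pi i(k-l)\cdot x}\,\sigma(l,-x)\,f(l)\,\text{d}x ,
\end{equation*}
so $\Op(\sigma)^t$ is an amplitude operator with amplitude $a(k,l,x)=\sigma(l,-x)$. I would then verify that $a\in\mathcal{A}^{0,\mu}_{\rho,\delta}(\Zn\times\Zn\times\Tn)$ by taking $J=0$ in \eqref{EQ:amps}: $a$ is independent of $k$, so $\Delta_k^\alpha a=0$ for $\alpha\neq0$, while the estimates on $\Delta_l^\beta$ and the $x$-derivatives of $\sigma(l,-x)$ follow immediately from $\sigma\in S^\mu_{\rho,\delta}$. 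Applying Theorem~\ref{THM:amplitudes} (whose hypothesis $0\leq\delta<\rho\leq1$ is exactly ours) yields $\Op(\sigma)^t=\Op(\sigma^t)$ with $\sigma^t\in S^{0+\mu}_{\rho,\delta}=S^\mu_{\rho,\delta}$ and, from \eqref{EQ:expamp},
\begin{equation*}
\sigma^t(k,x)\sim\sum_\alpha \frac{1}{\alpha!}\,\Delta_l^\alpha D_x^{(\alpha)}\,\sigma(l,-x)\Big|_{l=k}
=\sum_\alpha \frac{1}{\alpha!}\,\Delta_k^\alpha D_x^{(\alpha)}\,\sigma(k,-x),
\end{equation*}
using that the $l$-difference and the $x$-derivative act on different variables and that $\Delta_l^\alpha$ evaluated at $l=k$ is $\Delta_k^\alpha$. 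This is precisely \eqref{EQ:transpose}.

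The only genuinely non-routine point is the verification that $a(k,l,x)=\sigma(l,-x)$ lies in $\mathcal{A}^{0,\mu}_{\rho,\delta}$, where one must handle the reflection $x\mapsto-x$ carefully. Since reflection is a smooth measure-preserving diffeomorphism of $\Tn$, smoothness in $x$ is preserved; the mild subtlety is that the operators $D_x^{(\gamma)}$ of \eqref{EQ:das} do not commute simply with the reflection, but $D_x^{(\gamma)}$ is a constant-coefficient differential operator of order $|\gamma|$, so $D_x^{(\gamma)}[\sigma(l,-x)]$ is a fixed linear combination of ordinary derivatives $(\partial_x^{\beta'}\sigma)(l,-x)$ with $|\beta'|\le|\gamma|$, and these are controlled by the symbol estimates for $\sigma$ (using the standard equivalence between the $D_x^{(\beta)}$-type and ordinary-derivative estimates from \cite{ruzhansky2009pseudo}). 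Everything else, including the treatment of the remainder, is identical to the proofs of Theorems~\ref{THM:amplitudes} and~\ref{THM:adjoint}, so I would simply invoke those.
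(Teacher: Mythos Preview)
Your proposal is correct and follows essentially the same route as the paper: write the transpose via the duality, recognise it as an amplitude operator with amplitude $a(k,l,x)=\sigma(l,-x)$ after the substitution $x\mapsto -x$, and invoke Theorem~\ref{THM:amplitudes}. The paper's own proof is terser; your extra remark about how $D_x^{(\gamma)}$ interacts with the reflection is a welcome clarification the paper glosses over.
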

   
   \begin{proof}
  By the definition of transpose, we have 
   \[\sum_{k \in \mathbb{Z}^n}(T^t f)(k)g(k) = \sum_{k \in \mathbb{Z}^n}f(k)(Tg )(k).\]
   Then we can calculate
   \begin{eqnarray*}
   \sum_{k \in \mathbb{Z}^n}f(k)(Tg )(k)&= & \sum_{k \in \mathbb{Z}^n} \sum_{y\in \mathbb{Z}^n}\int_{\mathbb{T}^n} f(k)e^{2\pi i(k-l)\cdot x}\sigma(k, x)g(l)\text{d}x\\
    &= & \sum_{l \in \mathbb{Z}^n}g(l)\Bigg(\sum_{k\in \mathbb{Z}^n}\int_{\mathbb{T}^n}e^{2\pi i(k-l)\cdot x}\sigma(k, x)f(k)\text{d}x\Bigg).\\
   \end{eqnarray*}
   Therefore  
   \[ T^t g(l) =\sum_{k\in \mathbb{Z}^n}\int_{\mathbb{T}^n}e^{ 2\pi i(k-l)\cdot x}\sigma(k, x)g(k)\text{d}x. \]
  We can rewrite this, changing $x$ to $-x$, as
   \[ T^t g(l) =\sum_{k\in \mathbb{Z}^n}\int_{\mathbb{T}^n}e^{2\pi i(l-k)\cdot x}\sigma(k, -x)g(k)\text{d}x. \] 
  This is an amplitude operator with amplitude $a(l, k, x) =\sigma(k, -x)$.
   Since  $\sigma \in S^{\mu}_{\rho,\delta} (\mathbb{Z}^n \times \mathbb{T}^n) $, we have $a \in \mathcal{A}^{0,\mu}_{\rho,\delta} (\mathbb{Z}^n \times \mathbb{T}^n) $.
   Applying Theorem \ref{THM:amplitudes} we get that $\displaystyle T^t = \text{Op}(a^t)$ with  $\sigma^t \in S^{\mu}_{\rho,\delta} (\mathbb{Z}^n \times \mathbb{T}^n) $ given by 
   \begin{equation*}
   {\sigma^t}(k, x) \sim \sum_{\alpha}\frac{1}{\alpha!}\Delta_{k}^{\alpha}D_{x}^{(\alpha)}\sigma(k, -x),
   \end{equation*}
   yielding \eqref{EQ:transpose} and completing the proof.
   \end{proof}

   We now record the useful statement on the asymptotic sum of symbols.
   
   \begin{lemma}[Asymptotic sums of symbols]
   Let $1\geq \rho>\delta \geq 0$. Let $\big\{\mu_j\big\}_{j=0}^{\infty}$ be a sequence of $\mu_j\in \mathbb{R}$ such that $\mu_{j}>\mu_{j+1}$ and $\mu_j\rightarrow -\infty$ as $j\rightarrow \infty$.
   Let $\sigma_j\in S_{\rho, \delta}^{\mu_j}(\mathbb{Z}^n\times\mathbb{T}^n)$ for all $j\in \mathbb{N}_0$. Then there exits $S_{\rho, \delta}^{\mu_0}(\mathbb{Z}^n\times\mathbb{T}^n)$ such that for all $N\in \mathbb{N}_0$ we have
   \[\sigma \sim \sum_{j=0}^{\infty}\sigma_j,\]
   that is,
   \[\sigma - \sum_{j=0}^{N-1}\sigma_j \in S_{\rho, \delta}^{\mu_N}(\mathbb{Z}^n\times\mathbb{T}^n), \ \ \text{for all}\ N\in \mathbb{N} . \]
   \end{lemma}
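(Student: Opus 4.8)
The plan is to adapt the classical Borel-type summation argument to the lattice setting, using cutoffs in the space variable $k$ rather than in the frequency variable (which here is the compact torus $\Tn$ and admits no such truncation). Fix $\chi\in C^\infty(\Rn)$ with $0\le\chi\le 1$, $\chi(\eta)=0$ for $|\eta|\le 1$ and $\chi(\eta)=1$ for $|\eta|\ge 2$, and set $\chi_j(k):=\chi(k/t_j)$ for a sequence $t_j\nearrow\infty$ to be fixed. Since $\chi_j(k)=0$ whenever $|k|\le t_j$, for each fixed $(k,x)$ only finitely many terms of
\[
\sigma(k,x):=\sum_{j=0}^\infty \chi_j(k)\,\sigma_j(k,x)
\]
are nonzero, so $\sigma$ is well defined pointwise with no convergence issue. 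I would then show that the $t_j$ can be chosen increasing so fast that $\sigma$ has all the required properties.

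The key quantitative input is an estimate for $D_x^{(\beta)}\Delta_k^\alpha(\chi_j\sigma_j)$. First I would record that, since $\Delta^\gamma_k$ reduces to iterated ordinary lattice differences and $\chi_j$ is a rescaling of a fixed smooth function, one has $|\Delta_k^\gamma\chi_j(k)|\le C_\gamma t_j^{-|\gamma|}$ with support contained in $\{t_j\le |k|\le 2t_j\}$; on that support $t_j\sim 1+|k|$, and since $\rho\le 1$ this yields $|\Delta_k^\gamma\chi_j(k)|\le C_\gamma(1+|k|)^{-\rho|\gamma|}$. Applying the Leibniz rule for the difference operators (see \cite{RT-JFAA}) to the product $\chi_j\sigma_j$ together with Definition \ref{DEF:symbolclasses} then gives
\[
\big|D_x^{(\beta)}\Delta_k^\alpha(\chi_j\sigma_j)(k,x)\big|\le C_{\alpha,\beta,j}\,(1+|k|)^{\mu_j-\rho|\alpha|+\delta|\beta|},
\]
and, crucially, the support restriction $|k|\ge t_j$ lets me insert the gain $(1+|k|)^{\mu_j-\mu_{j-1}}\le (1+t_j)^{\mu_j-\mu_{j-1}}$, which tends to $0$ as $t_j\to\infty$ because $\mu_j<\mu_{j-1}$.

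With this in hand I would choose $t_j$ inductively so that for every pair of multi-indices with $|\alpha|+|\beta|\le j$ one has
\[
\sup_{k,x}\,(1+|k|)^{-(\mu_{j-1}-\rho|\alpha|+\delta|\beta|)}\,\big|D_x^{(\beta)}\Delta_k^\alpha(\chi_j\sigma_j)(k,x)\big|\le 2^{-j};
\]
this is possible precisely because of the gain above. To prove $\sigma\in S^{\mu_0}_{\rho,\delta}(\Zn\times\Tn)$ I fix $\alpha,\beta$ and split the sum into the finitely many terms with $j\le|\alpha|+|\beta|$, each lying in $S^{\mu_j}_{\rho,\delta}\subset S^{\mu_0}_{\rho,\delta}$, and the tail $j>|\alpha|+|\beta|$, which is dominated by $\sum_j 2^{-j}(1+|k|)^{\mu_0-\rho|\alpha|+\delta|\beta|}$ using $\mu_{j-1}\le\mu_0$. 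For the asymptotic property I would observe that $\sum_{j<N}(\chi_j-1)\sigma_j$ is supported in a bounded set of $k$, hence lies in $\bigcap_{\mu}S^{\mu}_{\rho,\delta}$, so it suffices to treat $\sum_{j\ge N}\chi_j\sigma_j$; the identical splitting, now using $\mu_{j-1}\le\mu_N$ for $j\ge N+1$ and putting the single term $j=N$ into $S^{\mu_N}_{\rho,\delta}$ directly, shows $\sum_{j\ge N}\chi_j\sigma_j\in S^{\mu_N}_{\rho,\delta}$. The main technical obstacle is the first step: justifying the Leibniz expansion for $\Delta_k^\alpha(\chi_j\sigma_j)$ and the rescaled-cutoff difference bounds, since these are the only places where the discrete nature of $\Zn$, as opposed to the continuous calculus, genuinely enters.
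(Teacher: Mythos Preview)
Your argument is correct and self-contained, but it differs from the paper's approach: the paper does not prove this lemma at all and simply observes that the symbol classes $S^\mu_{\rho,\delta}(\Zn\times\Tn)$ coincide, after swapping the order of the variables, with the toroidal classes $S^\mu_{\rho,\delta}(\Tn\times\Zn)$ analysed in \cite{ruzhansky2009pseudo}, so the result is an immediate consequence of \cite[Theorem~4.4.1]{ruzhansky2009pseudo}. What you have written is, in effect, a reproduction of the proof of that cited theorem adapted to the present notation, which has the advantage of being explicit and of showing precisely where the discreteness of $\Zn$ enters (namely in the Leibniz formula for $\Delta_k^\alpha$ and the difference bounds for the rescaled cutoff). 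Two small points worth tightening: the support of $\Delta_k^\gamma\chi_j$ is not exactly $\{t_j\le|k|\le 2t_j\}$ but a slightly larger annulus $\{t_j-c_\gamma\le|k|\le 2t_j+c_\gamma\}$ because forward differences involve neighbouring lattice points, and the discrete Leibniz rule contains shifts in the $k$-argument of one of the factors; neither affects your estimates since $(1+|k+\eta|)\sim(1+|k|)$ for bounded $\eta$ and $t_j$ may be taken large, but it is worth stating them correctly.
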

   This statement immediately follows from \cite[Theorem 4.4.1]{ruzhansky2009pseudo} since the symbol classes are the same modulo swapping the order of the variables.
   
   We now define elliptic operators in the symbol classes $S^{\mu}_{\rho, \delta}(\mathbb{Z}^n\times\mathbb T^n)$.
   
        \begin{defn}[Elliptic operators]
      A symbol  $\sigma\in S^{\mu}_{\rho, \delta}(\mathbb{Z}^n\times\mathbb T^n)$ will be called elliptic (of order $\mu$) if there exist $C>0$ and $M>0$ such that 
      \[|\sigma
      (k,x)|\geq C(1+|k|)^{\mu}\]
      holds for all $x\in \mathbb{T}^n$ and all $k\in \mathbb{Z}^n$ such that $|k|\geq M.$ The corresponding pseudo-difference operator  \text{Op}$(\sigma
      )$ is then also called elliptic.
      \end{defn}

   \begin{thm}[Ellipticity and parametrix]\label{THM:elliptic}
   Let $0\leq \delta<\rho \leq 1.$
   An operator $A\in \Op(S^{\mu}_{\rho, \delta}(\mathbb{Z}^n\times\mathbb{T}^n))$ is elliptic if and only if there exists $B\in \Op(S^{-\mu}_{\rho, \delta}(\mathbb{Z}^n\times\mathbb{T}^n))$ such that 
   \[BA\backsim I\backsim AB\;\textrm{ modulo }\; \Op(S^{-\infty}(\mathbb{Z}^n\times \mathbb T^n)),\]
   where $I$ is the identity operator.
   
   Moreover, let $\displaystyle A\sim \sum_{l=0}^{\infty}A_l$ be an expansion, where $A_l\in \Op(S_{\rho,\delta}^{\mu-(\rho-\delta)l}(\mathbb{Z}^n\times \mathbb{T}^n))$. Then an asymptotic expansion  $\displaystyle B\sim \sum_{j=0}^{\infty}B_j$ with $B_j\in \Op(S_{\rho,\delta}^{-\mu-(\rho-\delta)j}(\mathbb{Z}^n\times \mathbb{T}^n))$
    can be obtained by setting $\displaystyle \sigma_{B_0} := \frac{1}{\sigma_{A_0}}$, and then recursively    
   \begin{equation}\label{EQ:parexp}
 \sigma_{B_N}(k, x) = \frac{-1}{\sigma_{A_0}(k, x)} \sum_{j = 0}^{N-1}\sum_{l = 0}^{N-1}\sum_{|\gamma| = N - j - l}\frac{1}{\gamma!}\Big[D_{x}^{(\gamma)} \sigma_{B_j}(k, x)\Big] \Delta_{k}^{\gamma}\sigma_{A_l}(k, x).
\end{equation}
   \end{thm}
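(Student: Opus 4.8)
First I would prove that the existence of a parametrix forces ellipticity. Suppose $B\in\Op(S^{-\mu}_{\rho,\delta}(\Zn\times\Tn))$ satisfies $BA\sim I$ modulo $\Op(S^{-\infty})$. By the composition formula (Theorem \ref{THM:comp}), the symbol of $BA$ equals $\sigma_B\sigma_A$ modulo $S^{-(\rho-\delta)}_{\rho,\delta}$, while the symbol of $I$ is the constant $1$. Hence $\sigma_B(k,x)\sigma_A(k,x)-1\in S^{-(\rho-\delta)}_{\rho,\delta}$, so that $|\sigma_B(k,x)\sigma_A(k,x)|\geq \frac12$ for all $x\in\Tn$ and all $|k|\geq M$ with $M$ large enough. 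Since $|\sigma_B(k,x)|\leq C(1+|k|)^{-\mu}$, this yields $|\sigma_A(k,x)|\geq \frac{1}{2C}(1+|k|)^{\mu}$ for $|k|\geq M$, i.e. $A$ is elliptic.

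\textbf{Constructing the parametrix.} For the converse I would build $B$ by the standard iterative scheme. Since ellipticity only holds for $|k|\geq M$, I would first replace $\sigma_{A_0}$ on the finite set $\{|k|<M\}$ by a nonvanishing value (say $(1+|k|)^\mu$); this alters the operator only by a symbol supported on finitely many $k$, hence by an element of $\Op(S^{-\infty})$, and makes $\sigma_{A_0}$ bounded below by $c(1+|k|)^\mu$ everywhere, so that $\sigma_{B_0}:=1/\sigma_{A_0}$ is well defined on all of $\Zn\times\Tn$. The crucial step is the lemma that $\sigma_{B_0}\in S^{-\mu}_{\rho,\delta}$. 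I would prove this by induction on $|\alpha|+|\beta|$: differentiating and taking differences of the identity $\sigma_{A_0}\,\sigma_{B_0}=1$ and using the Leibniz rule for $D_x^{(\beta)}$ together with the discrete Leibniz rule for $\Delta^\alpha_k$, one expresses $D_x^{(\beta)}\Delta^\alpha_k\sigma_{B_0}$ through $\sigma_{B_0}$, its lower-order difference-derivatives, and the symbol estimates for $\sigma_{A_0}$; the lower bound from ellipticity then delivers $|D_x^{(\beta)}\Delta^\alpha_k\sigma_{B_0}|\leq C_{\alpha,\beta}(1+|k|)^{-\mu-\rho|\alpha|+\delta|\beta|}$. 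Since our symbol classes coincide with the toroidal ones of \cite{RT-JFAA,ruzhansky2009pseudo} up to swapping the variables, this reciprocal estimate can alternatively be quoted from there.

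\textbf{The recursion and summation.} Define $\sigma_{B_N}$ for $N\geq 1$ by \eqref{EQ:parexp}. I would verify by induction that $\sigma_{B_N}\in S^{-\mu-(\rho-\delta)N}_{\rho,\delta}$: each summand $D_x^{(\gamma)}\sigma_{B_j}\,\Delta_k^{\gamma}\sigma_{A_l}$ lies in $S^{-(\rho-\delta)(j+l+|\gamma|)}_{\rho,\delta}=S^{-(\rho-\delta)N}_{\rho,\delta}$, and multiplying by $1/\sigma_{A_0}\in S^{-\mu}_{\rho,\delta}$ gives the claimed order. The recursion is exactly what is obtained by inserting $\sigma_B\sim\sum_N\sigma_{B_N}$ and $\sigma_A\sim\sum_l\sigma_{A_l}$ into the composition formula and requiring that the order-$N$ contribution to $\sigma_{BA}$ vanish for $N\geq 1$ while the order-$0$ term equals $1$; grouping terms by $m=j+l+|\gamma|$ and solving for the top term $\sigma_{B_N}\sigma_{A_0}$ yields \eqref{EQ:parexp}. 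By the asymptotic summation lemma above there is $\sigma_B\in S^{-\mu}_{\rho,\delta}$ with $\sigma_B\sim\sum_N\sigma_{B_N}$, and by construction $\sigma_{BA}-1\in S^{-(\rho-\delta)N}_{\rho,\delta}$ for every $N$, hence $\sigma_{BA}-1\in S^{-\infty}$, i.e. $BA\sim I$ modulo $\Op(S^{-\infty})$. Running the same scheme from the right, or more economically constructing a right parametrix $B'$ and using $B\sim B(AB')\sim(BA)B'\sim B'$ modulo smoothing, gives $AB\sim I$ as well, so $B$ is a two-sided parametrix.

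\textbf{Main obstacle.} The heart of the argument, and the only genuinely new point compared with the continuous theory, is the reciprocal lemma $1/\sigma_{A_0}\in S^{-\mu}_{\rho,\delta}$: the differences $\Delta^\alpha_k$ act by shifts rather than as a derivation, so the naive quotient rule is unavailable and one must run the induction through the discrete Leibniz formula. Once this is secured, the order bookkeeping in the recursion and the passage to the asymptotic sum are routine and parallel the toroidal case.
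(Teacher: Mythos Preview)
Your proposal is correct and takes essentially the same approach as the paper: both use the composition formula for the easy direction, invoke the reciprocal lemma $1/\sigma_{A_0}\in S^{-\mu}_{\rho,\delta}$ (which the paper simply cites as \cite[Lemma~4.9.4]{ruzhansky2009pseudo} rather than proving by the discrete Leibniz induction you sketch), and build the parametrix from the recursion \eqref{EQ:parexp} together with asymptotic summation. The paper is more terse---it first gets a one-sided parametrix via the Neumann-series step $B_0A=I-T$ and then defers the remaining details and the derivation of \eqref{EQ:parexp} to \cite[Theorems~4.9.6 and 4.9.13]{ruzhansky2009pseudo}---but the underlying argument is the same as yours.
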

   Here, for example, by $AB\backsim I$ we mean that 
   \[I - AB \in S^{-\infty}(\mathbb{Z}^n\times\mathbb{T}^n):=\bigcap_{\nu\in\R} S^{\nu}_{\rho, \delta}(\mathbb{Z}^n\times\mathbb{T}^n),\]
   is the class of `smoothing' operators, independent of $\rho$ and $\delta$. We will still call such operators `smoothing' although the smoothness does not make sense on the lattice $\Zn$.
   
   \begin{proof}
   ``If part". We want to show that $A\in $\ \rm{Op}$(S^{\mu}_{\rho, \delta}(\mathbb{Z}^n\times\mathbb{T}^n))$ is elliptic and we are given that
   \[I - AB = T\in S^{-\infty}(\mathbb{Z}^n\times\mathbb{T}^n).\]
   Hence by Theorem \ref{THM:comp} we have, in particular, that
   \[ 1 - \sigma_{A}(k, x)\sigma_{B}(k, x)\in S^{-(\rho-\delta)}_{\rho, \delta}(\mathbb{Z}^n\times\mathbb{T}^n),\]
   implying that there exists a constant $C>0$ such that
   \[ |1 - \sigma_{A}(k, x)\sigma_{B}(k, x)| \leq  C(1+|k|)^{-(\rho-\delta)}.\]
  Take $M$ such that $\displaystyle C(1+|M|)^{-(\rho - \delta)}< \frac{1}{2}.$ 
   It then follows that 
   \[|\sigma_{A}(k, x)\sigma_{B}(k, x)| \geq \frac{1}{2}, \quad \text{for all} \ \ |k|\geq M,\]
   and hence
   \[|\sigma_{A}(k, x)| \geq \frac{1}{2|\sigma_{B}(k, x)|} \geq \frac{1}{2C_B}(1+|k|)^{\mu},\]
   since 
   \[|\sigma_{B}(k, x)|\leq  C_B(1+|k|)^{-\mu}.\] Hence $\sigma_A$
   is elliptic of order $\mu$.

   ``Only if part". We can restrict to $|k|\geq M$. Take 
   \[\sigma_{B_0}(k, x):= \frac{1}{\sigma_{A}(k, x)}.\]
   By \cite[Lemma 4.9.4]{ruzhansky2009pseudo} we have $\sigma_{B_0}\in S_{\rho, \delta}^{-\mu}(\mathbb{Z}^n\times \mathbb{T}^n)$. Also by the composition formula in Theorem \ref{THM:comp} we have
   \[\sigma_{B_0 A} = \sigma_{B_0}\sigma_{A} - \sigma_{T} \backsim 1 - \sigma_{T},\]
   for some $T\in S_{\rho, \delta}^{-(\rho-\delta)}(\mathbb{Z}^n\times \mathbb{T}^n)$, hence $B_0 A = I -T.$   
   The rest of the ``only if'' proof follows by the composition formula and a functional analytic argument similar to the proof of \cite[Theorem 4.9.6]{ruzhansky2009pseudo}, so we omit it.
   
   We will now show formula \eqref{EQ:parexp}.
 We have $I \backsim BA$ which means that $1  \backsim  \sigma_{BA}(k, x).$ 
   Then by Theorem \ref{THM:comp} we have
   \begin{eqnarray*}
   1 &\backsim & \sum_{\gamma \geq 0}\frac{1}{\gamma !}\Big[D_{x}^{(\gamma)}\sigma_B(k, x)\Big]\Delta_{k}^{\gamma} \sigma_A(k, x) \\
   &\backsim & \sum_{\gamma \geq 0}\frac{1}{\gamma !}\Big[D_{x}^{(\gamma)} \sum_{j =0}^{\infty}\sigma_{B_j} (k, x)\Big]\Delta_{k}^{\gamma} \sum_{l = 0}^{\infty}\sigma_{A_l}(k, x).
   \end{eqnarray*}
   The rest follows by using a similar argument to the proof of \cite[Theorem 4.9.13]{ruzhansky2009pseudo}, completing the proof. 
   \end{proof}
   
   \section{Relation between lattice and toroidal quantizations}
   \label{SEC:link}
   
   We will now discuss the relation between the lattice quantization analysed so far and the toroidal quantization developed in \cite{RT-JFAA,RT-Birk}. The toroidal quantization has since led to many further developments and applications, see e.g. \cite{Paycha16,PZ14,ParZ14,Cardona14}, to mention a few. So, the described link leads to a way of transferring results from the toroidal setting to the lattice. We will give such an example in the derivation of $\ell^2$-estimates in Theorem \ref{THM:L2}, as well as apply it in Corollary \ref{COR:comp} to give a characterisation of compact operators on $\ell^2(\Zn)$, in Corollary \ref{COR:compG} for a version of the Gohberg lemma,and in Theorem \ref{COR:nuc} to give a condition for the membership in the Schatten-von Neumann classes.
   
   To distinguish between these two quantizations, here we will use the notation $\Optn$ for the toroidal quantization with symbol $\tau:\Tn\times\Zn\to\C$, for $v\in C^\infty(\Tn)$ yielding
   \begin{equation}\label{EQ:qtor}
 \Optn(\tau)v(x)=\sum_{k\in\Zn} e^{2\pi i x\cdot k}\tau(x,k) (\Ftn v)(k).
\end{equation} 
   To contrast it with the lattice quantization \eqref{neil}, we will denote it here by   
    \begin{equation}\label{neiltn}
                    \Opzn(\sigma)f(k) := \int_{\mathbb{T}^n}e^{2\pi i k\cdot x}\sigma(k,x)(\Fzn f)(x)\text{d}x, \ \ \ k\in \mathbb{Z}^n.
                    \end{equation}
The lattice Fourier transform $\Fzn$ in \eqref{gael1} is related to the toroidal Fourier transform by
\begin{equation}\label{EQ:ftts}
 \Ftn v(k)=\int_\Tn e^{-2\pi i x\cdot k} v(x)\text{d}x=\Fzn^{-1}v(-k),
\end{equation} 
since $\Fzn^{-1}$ has the form \eqref{EQ:Finv}.

 \begin{thm}\label{THM:link}
For a function $\sigma:\Zn\times\Tn\to\C$ define $\tau(x,k):=\overline{\sigma(-k,x)}$. Then we have
\begin{equation}\label{EQ:link1}
\Opzn(\sigma)=\Fzn^{-1}\circ \Optn(\tau)^*\circ \Fzn,
\end{equation} 
where $\Optn(\tau)^*$ is the adjoint of the toroidal pseudo-differential operator $\Optn(\tau).$
We also have
\begin{equation}\label{EQ:link2}
\Optn(\tau)=\Fzn\circ \Opzn(\sigma)^*\circ \Fzn^{-1},
\end{equation} 
where $\Opzn(\sigma)^*$ is the adjoint of the pseudo-difference operator $\Opzn(\sigma).$
\end{thm}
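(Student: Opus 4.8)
The plan is to verify \eqref{EQ:link1} by a direct computation of the relevant operators and then to deduce \eqref{EQ:link2} from it by a duality argument, using that $\Fzn:\ell^2(\Zn)\to L^2(\Tn)$ is unitary by the Plancherel formula \eqref{perrin}, so that $\Fzn^*=\Fzn^{-1}$. First I would compute the adjoint $\Optn(\tau)^*$ on $L^2(\Tn)$ straight from the definition \eqref{EQ:qtor}: expanding the toroidal Fourier transform $\Ftn$ inside \eqref{EQ:qtor} and pairing $\Optn(\tau)v$ with a test function $w$ in the $L^2(\Tn)$ inner product, one reads off
\[
\Optn(\tau)^* w(y)=\int_\Tn\sum_{k\in\Zn} e^{2\pi i (y-x)\cdot k}\,\overline{\tau(x,k)}\, w(x)\,\text{d}x.
\]
Substituting $\tau(x,k)=\overline{\sigma(-k,x)}$, so that $\overline{\tau(x,k)}=\sigma(-k,x)$, and relabelling $k\mapsto -k$ in the sum turns this into
\[
\Optn(\tau)^* w(y)=\int_\Tn\sum_{k\in\Zn} e^{2\pi i (x-y)\cdot k}\,\sigma(k,x)\, w(x)\,\text{d}x.
\]

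Next I would compute the conjugated operator $\Fzn^{-1}\circ\Optn(\tau)^*\circ\Fzn$ applied to $f$. Taking $w=\Fzn f$ in the previous display and then applying $\Fzn^{-1}$ (given by \eqref{EQ:Finv}) introduces a factor $e^{2\pi i k'\cdot y}$ together with an integration in $y$. The $y$-integral yields $\int_\Tn e^{2\pi i (k'-k)\cdot y}\,\text{d}y=\delta_{k',k}$, which collapses the sum over $k$ to the single term $k=k'$. What survives is exactly
\[
\int_\Tn e^{2\pi i k'\cdot x}\,\sigma(k',x)\,(\Fzn f)(x)\,\text{d}x=\Opzn(\sigma)f(k'),
\]
which is precisely \eqref{EQ:link1}.

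Finally, \eqref{EQ:link2} follows by taking Hilbert-space adjoints in \eqref{EQ:link1}. Since $\Fzn$ is unitary we have $\Fzn^*=\Fzn^{-1}$ and $(\Fzn^{-1})^*=\Fzn$, and $\bigl(\Optn(\tau)^*\bigr)^*=\Optn(\tau)$, so
\[
\Opzn(\sigma)^*=\bigl(\Fzn^{-1}\circ\Optn(\tau)^*\circ\Fzn\bigr)^*=\Fzn^{-1}\circ\Optn(\tau)\circ\Fzn.
\]
Rearranging this identity gives $\Optn(\tau)=\Fzn\circ\Opzn(\sigma)^*\circ\Fzn^{-1}$, which is \eqref{EQ:link2}.

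I expect the only genuine obstacle to be bookkeeping rather than analysis: one must keep track of the competing sign conventions of the two Fourier transforms (recorded in \eqref{EQ:ftts}), the complex conjugation built into the definition of $\tau$, and the $k\mapsto -k$ relabelling, so that everything aligns to recover exactly $\Opzn(\sigma)$ rather than some reflected or conjugated variant. The orthogonality relation on $\Tn$ that collapses the double Fourier pairing is the analytic heart of the computation, but it is entirely routine; all the delicacy lies in the signs. For rigour one should also note that these manipulations are justified for $f$ in a dense class (for instance $f\in\mathcal{S}(\Zn)$, equivalently $\Fzn f\in C^\infty(\Tn)$), where the interchange of summation and integration is legitimate, and the identities then extend by density.
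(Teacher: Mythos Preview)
Your proof is correct and follows essentially the same approach as the paper: both proceed by a direct computation of the relevant adjoint, substitute the definition of $\tau$, perform the relabelling $k\mapsto -k$, and then deduce \eqref{EQ:link2} from \eqref{EQ:link1} by taking adjoints and using the unitarity of $\Fzn$. The only cosmetic difference is that the paper factors $\Opzn(\sigma)=T\circ\Fzn$ through the auxiliary operator $Tg(k)=\int_\Tn e^{2\pi i k\cdot x}\sigma(k,x)g(x)\,\text{d}x$ and computes $T^*$ (recognising it as $\Optn(\tau)\circ\Fzn$), whereas you compute $\Optn(\tau)^*$ first and then collapse the composition $\Fzn^{-1}\circ\Optn(\tau)^*\circ\Fzn$ via the orthogonality relation $\int_\Tn e^{2\pi i(k'-k)\cdot y}\,\text{d}y=\delta_{k',k}$; these are simply two orderings of the same calculation.
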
 

Formulae \eqref{EQ:link1} and \eqref{EQ:link2} allow one to reduce certain problems for $\Opzn$ to the corresponding problems for $\Optn$, at least when one is working in the $\ell^2$-framework. In Theorem \ref{THM:L2} we show this in the case of finding conditions for $\Opzn(\sigma)$ to be bounded on $\ell^2(\Zn)$ in terms of $\sigma$. 
Moreover, we can conclude that $\Opzn(\sigma)$ is in the $p$-Schatten class on $\ell^2(\Zn)$ if the operator $\Optn(\tau)$ is in the $p$-Schatten class on $L^2(\Tn)$, and conditions for toroidal pseudo-differential operators to be in the $p$-Schatten classes or to be $r$-nuclear on $L^2(\Tn)$ in terms of their toroidal symbols were given in \cite{DR-MRL} and also in \cite{DR-JMPA}. We will give such an application in Theorem \ref{COR:nuc}.

\begin{proof}[Proof of Theorem \ref{THM:link}]
For $g\in C^\infty(\Tn$), consider the operator
$$
  Tg(k) := \int_{\mathbb{T}^n}e^{2\pi i k\cdot x}\sigma(k,x)g(x)\text{d}x.
$$
Then by \eqref{neiltn} we have the relation
\begin{equation}\label{EQ:rel1}
\Opzn(\sigma) = T\circ\Fzn.
\end{equation} 
Let us calculate the adjoint operator $T^*$ determined by the relation
$$
(Tg,h)_{\ell^2(\Zn)}=(g,T^*h)_{L^2(\Tn)}.
$$
We have
\begin{multline*}
(Tg,h)_{\ell^2(\Zn)}=\sum_{k\in\Zn} Tg(k)\overline{h(k)}=
\sum_{k\in\Zn} \int_{\mathbb{T}^n}e^{2\pi i k\cdot x}\sigma(k,x)g(x)\overline{h(k)} \text{d}x 
\\
= \int_{\mathbb{T}^n} g(x)\left(\sum_{k\in\Zn}e^{2\pi i k\cdot x}\sigma(k,x)\overline{h(k)}\right) \text{d}x.
\end{multline*} 
Consequently, we have
\begin{equation}\label{EQ:Tadj}
\begin{aligned}
T^*h(x) & =\sum_{k\in\Zn}e^{-2\pi i k\cdot x}\overline{\sigma(k,x)}h(k) \\
& = \sum_{k\in\Zn}e^{2\pi i k\cdot x}\overline{\sigma(-k,x)}h(-k) \\
& = \sum_{k\in\Zn}e^{2\pi i k\cdot x}\tau(x,k) \Ftn v(k)=\Optn(\tau)v(x),
\end{aligned}
\end{equation} 
with $v$ such that $\Ftn v(k)=h(-k).$ It then follows from \eqref{EQ:ftts} that 
$h(k)=\Ftn v(-k)=\Fzn^{-1}v(k)$. This and \eqref{EQ:Tadj} imply that
\begin{equation}\label{EQ:rel2}
T^*=\Optn(\tau) \circ\Fzn.
\end{equation}
Consequently, we also have
\begin{equation}\label{EQ:rel3}
T=\Fzn^{*}\circ \Optn(\tau)^* = \Fzn^{-1}\circ \Optn(\tau)^*,
\end{equation}
in view of the unitarity of all the Fourier transforms. And now, combining \eqref{EQ:rel3} with \eqref{EQ:rel1}, we obtain \eqref{EQ:link1}. Finally, \eqref{EQ:link2} follows form \eqref{EQ:link1} by taking adjoint and using the unitarity of the Fourier transform.
\end{proof}

     \section{Applications}
   \label{SEC:L2-HS}
   
   In this section we give conditions for the boundedness of pseudo-difference operators on $\ell^2(\Zn)$, weighted $\ell^2(\Zn)$, $\ell^p(\Zn)$. We also discuss a condition for Hilbert-Schmidt operators and its implication for the $\ell^p$-$\ell^{p'}$ boundedness, and give conditions for the membership in Schatten classes. Finally, we discuss a version of Fourier integral operators on the lattice $\Zn$.
   
   \subsection{Boundedness on $\ell^2(\Zn)$}
   
   We recall that if $H$ be a complex separable Hilbert space then a bounded linear operator on $H$ is said to be a Hilbert-Schmidt operator if there exists an orthonormal basis $\displaystyle \{w_{m}\}_{m=1}^{\infty}$ in $H$ such that
   $\sum_{m=1}^{\infty}\|Aw_{m}\|_{H}^{2}<\infty.$
   If $A\in \mathscr{L}(H)$ is a Hilbert-Schmidt operator then its norm is given by
   \[\|A\|_{\HS}^{2} = \sum_{m=1}^{\infty}\|Aw_{m}\|_{H}^{2},\]
   where $\displaystyle \{w_{m}\}_{m=1}^{\infty}$ is any orthonormal basis in $H$.
   The following is a natural condition for an operator on $\ell^2(\Zn)$ to be Hilbert-Schmidt in terms of the symbol. Interestingly, it implies that Hilbert-Schmidt operators are $\ell^p$-$\ell^{p'}$ bounded for all $1\leq p\leq 2$.
     
   \begin{proposition}\label{PROP:HS}
   The pseudo-difference operator $\Op(\sigma): \ell^{2}({\mathbb Z}^n)\rightarrow \ell ^{2}({ \mathbb Z}^n)$ is a Hilbert-Schmidt operator if and only if $\sigma\in L^{2}(\mathbb{Z}^{n}\times\mathbb{T}^n)$, in which case we have
   \begin{equation}\label{EQ:HS}
\|\Op(\sigma)\|_{\HS} = \|\sigma\|_{L^{2}(\mathbb{Z}^{n}\times\mathbb{T}^n)}
   = \left(\sum_{k\in \mathbb Z^n}\int_{\mathbb{T}^n}|\sigma(k,x)|^{2}\text{\rm d}x\right)^{\frac12}.
\end{equation} 
   Moreover,  if $\sigma\in L^2(\Zn\times\Tn)$ then $\Op(\sigma):\ell^p(\Zn)\to\ell^q(\Zn)$ is bounded for all $1\leq p\leq 2$ and $\frac1p+\frac1q=1$, and we have
   \begin{equation}\label{EQ:lplq}
\|\Op(\sigma)\|_{\mathscr{L}(\ell^p(\Zn)\to \ell^q(\Zn))}\leq \|\sigma\|_{L^{2}(\mathbb{Z}^{n}\times\mathbb{T}^n)}.
\end{equation} 
   \end{proposition}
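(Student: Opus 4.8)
The plan is to establish the Hilbert-Schmidt characterization first, then deduce the $\ell^p$-$\ell^q$ bound as a corollary via interpolation. For the Hilbert-Schmidt part, the natural strategy is to compute $\|\Op(\sigma)\|_{\HS}^2$ using a convenient orthonormal basis of $\ell^2(\Zn)$, namely the standard basis $\{\delta_m\}_{m\in\Zn}$ where $\delta_m(k)=1$ if $k=m$ and $0$ otherwise. Since $\widehat{\delta_m}(x)=e^{-2\pi i m\cdot x}$, formula \eqref{neil2} gives
\[
\Op(\sigma)\delta_m(k)=\int_{\Tn}e^{2\pi i k\cdot x}\sigma(k,x)e^{-2\pi i m\cdot x}\text{d}x
=\int_{\Tn}e^{2\pi i (k-m)\cdot x}\sigma(k,x)\text{d}x=\kappa(k,k-m),
\]
recognizing the kernel $\kappa$ from \eqref{EQ:kernels}. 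Then I would compute
\[
\|\Op(\sigma)\|_{\HS}^2=\sum_{m\in\Zn}\|\Op(\sigma)\delta_m\|_{\ell^2}^2
=\sum_{m\in\Zn}\sum_{k\in\Zn}|\kappa(k,k-m)|^2
=\sum_{k\in\Zn}\sum_{l\in\Zn}|\kappa(k,l)|^2,
\]
after relabeling $l=k-m$ and swapping the order of summation. The key observation is that for each fixed $k$, the quantity $\kappa(k,l)$ is exactly the $l$-th toroidal Fourier coefficient of $x\mapsto\sigma(k,x)$, so by Plancherel on $\Tn$ we have $\sum_{l\in\Zn}|\kappa(k,l)|^2=\int_{\Tn}|\sigma(k,x)|^2\text{d}x$. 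Summing over $k$ yields \eqref{EQ:HS} and simultaneously shows that finiteness of the Hilbert-Schmidt norm is equivalent to $\sigma\in L^2(\Zn\times\Tn)$.

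For the $\ell^p$-$\ell^q$ boundedness, the plan is to establish the two endpoint cases and interpolate. At $p=2$, the Hilbert-Schmidt bound trivially gives $\ell^2\to\ell^2$ boundedness with $\|\Op(\sigma)\|_{\mathscr{L}(\ell^2\to\ell^2)}\le\|\Op(\sigma)\|_{\HS}=\|\sigma\|_{L^2}$. At $p=1$, I would estimate the $\ell^\infty$ norm of $\Op(\sigma)f$ directly: writing $\Op(\sigma)f(k)=\sum_{m}K(k,m)f(m)$ and applying Cauchy-Schwarz in $m$ gives $|\Op(\sigma)f(k)|\le\big(\sum_m|K(k,m)|^2\big)^{1/2}\|f\|_{\ell^1}$ (using $\|f\|_{\ell^2}\le\|f\|_{\ell^1}$ is too weak; instead I bound $|\sum_m K(k,m)f(m)|\le\|K(k,\cdot)\|_{\ell^2}\|f\|_{\ell^2}\le\|K(k,\cdot)\|_{\ell^2}\|f\|_{\ell^1}$), and taking the supremum over $k$ of $\|K(k,\cdot)\|_{\ell^2}$ is controlled by $\|\sigma\|_{L^2}$. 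The cleanest route, however, is to invoke the classical fact that a Hilbert-Schmidt operator with kernel $K\in\ell^2(\Zn\times\Zn)$ maps $\ell^1\to\ell^2$ and $\ell^2\to\ell^\infty$ with norm at most $\|K\|_{\ell^2}=\|\sigma\|_{L^2}$; combining these with the $\ell^2\to\ell^2$ bound and applying the Riesz-Thorin interpolation theorem between the $\ell^1\to\ell^\infty$ and $\ell^2\to\ell^2$ endpoints yields the full range $\ell^p\to\ell^q$ for $1\le p\le 2$ with constant $\|\sigma\|_{L^2}$, giving \eqref{EQ:lplq}.

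I expect the main obstacle to be organizing the interpolation argument cleanly rather than any deep difficulty: one must identify the correct pair of endpoints so that Riesz-Thorin produces exactly the dual exponent relation $\frac1p+\frac1q=1$. The natural endpoints are $(p,q)=(1,\infty)$ and $(p,q)=(2,2)$, since the line segment joining them in the $(\frac1p,\frac1q)$-plane is precisely $\frac1p+\frac1q=1$ for $\frac1p\in[\frac12,1]$. The bound at the $(1,\infty)$ endpoint is immediate from $|K(k,m)|\le\|\sigma(k,\cdot)\|_{L^1}\le\|\sigma(k,\cdot)\|_{L^2}\le\|\sigma\|_{L^2}$, and since the interpolation constant for Riesz-Thorin is the geometric mean of the endpoint constants, both being bounded by $\|\sigma\|_{L^2}$ forces the interpolated constant to be at most $\|\sigma\|_{L^2}$ as well. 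The remaining care is simply to verify measurability and the applicability of Fubini-Tonelli when interchanging sums and integrals, all of which is routine given $\sigma\in L^2(\Zn\times\Tn)$.
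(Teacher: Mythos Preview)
Your proposal is correct and follows essentially the same route as the paper: the Hilbert--Schmidt identity is computed via the standard basis $\{\delta_m\}$ and Plancherel on $\Tn$ exactly as the paper does, and the $\ell^p\to\ell^q$ bound is obtained by Riesz--Thorin interpolation between the $(1,\infty)$ and $(2,2)$ endpoints with constant $\|\sigma\|_{L^2}$ at each. The only cosmetic difference is that the paper derives both endpoint bounds directly from the symbol representation (Cauchy--Schwarz on the integral for $\ell^2\to\ell^2$, and $\|\widehat f\|_{L^\infty}\le\|f\|_{\ell^1}$ for $\ell^1\to\ell^\infty$), whereas you phrase them through the kernel and the Hilbert--Schmidt norm; the content is identical.
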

   
   The Hilbert-Schmidt part follows directly by the Plancherel formula and will be used in Theorem \ref{COR:nuc} to imply Schatten properties of operators. The boundedness part was shown in \cite{CR2011}.
   We will give simple proofs for completeness, to show formulae \eqref{EQ:HS} and \eqref{EQ:lplq}.
   In the case $p=2$ it implies the $\ell^2$-boundedness result in \cite{molahajloo2009pseudo} (where $n=1$ was considered).

   \begin{proof}[Proof of Proposition \ref{PROP:HS}]
   Let $\{w_{m}\}_{m\in\mathbb{Z}^n}$ be the standard orthonormal basis  for $\ell^{2}(\mathbb{Z}^n)$ which is defined by $w_{m}(k)=\delta_{mk}$ being the Kronecker delta.
      By \eqref{gael} the Fourier transform of $w_{m}$ is given by 
   $\widehat{w_{m}}(y) = e^{-2\pi im\cdot y}.$
   Then we have
  \begin{equation}\label{EQ:www}
   \text{Op}(\sigma)w_{m}(k) = \int_{\mathbb{T}^n}e^{2\pi ik\cdot y}\sigma(k,y)e^{-2\pi im\cdot y}dy = (\mathcal{F}_{\mathbb{T}^n}\sigma)(k,m-k).
\end{equation} 
   We then have
   \begin{multline*}
    \|\Op(\sigma)\|_{\HS}^{2} = \sum_{m\in \mathbb Z^n}\|\Op(\sigma)w_{m}\|_{\ell^{2}(\mathbb{Z}^n)}^{2}= \sum_{m\in \mathbb Z^n} \sum_{k\in \mathbb Z^n}|(\mathcal{F}_{\mathbb{T}^n}\sigma)(k,m-k)|^{2}\\
   =  \sum_{k\in \mathbb Z^n}\sum_{m\in \mathbb Z^n}|(\mathcal{F}_{\mathbb{T}^n}\sigma)(k,m)|^{2} 
   = \sum_{k\in \mathbb Z^n}\int_{\mathbb{T}^n}|\sigma(k,y)|^{2}\text{d}y
   = \|\sigma\|_{L^{2}(\mathbb{Z}^{n}\times\mathbb{T}^n)}^{2},
\end{multline*} 
   completing the proof of the Hilbert-Schmidt part.
   
  Furthermore, under the condition $\sigma\in L^{2}(\mathbb{Z}^{n}\times\mathbb{T}^n)$ we have 
  the $\ell^1-\ell^\infty$ boundedness in view of 
\begin{multline*}
|\Op(\sigma)f(k)|\leq \int_\Tn |\sigma(k,x)| |\widehat{f}(x)|\text{d}x
\leq \|\widehat{f}\|_{L^\infty(\Tn)}
\int_\Tn |\sigma(k,x)| \text{d}x
\\
\leq  \|f\|_{\ell^1(\Zn)}
\left(\int_\Tn |\sigma(k,x)|^2  \text{d}x\right)^{\frac12}
\leq \|f\|_{\ell^1(\Zn)}\|\sigma\|_{L^{2}(\mathbb{Z}^{n}\times\mathbb{T}^n)}.
\end{multline*} 
We also have the $\ell^2-\ell^2$ boundedness if we apply the Cauchy-Schwartz inequality to the first line above:
$$
|\Op(\sigma)f(k)|^2\leq \left(\int_\Tn |\sigma(k,x)|^2\text{d}x\right) \left(\int_\Tn |\widehat{f}(x)|^2\text{d}x\right),
$$
so that the result follows by interpolation.
   \end{proof}
   
   We now improve the statement of Proposition \ref{PROP:HS} in the case of $p=2$ showing that actually no decay is needed for the $\ell^2$-boundedness provided that finitely many derivatives are bounded, yielding a Mikhlin type theorem, but for general pseudo-differential operators.
   
   \begin{thm}\label{THM:L2}
 Let $\varkappa\in\mathbb N$ and $\varkappa>n/2$.
Assume that the symbol $\sigma:\Zn\times\Tn\to\C$ satisfies
\begin{equation}\label{EQ:l2conds}
|\partial_x^\beta \sigma(k,x)|\leq C, \quad\textrm{ for all }\; (k,x)\in\Zn\times\Tn,
\end{equation} 
for all $|\beta|\leq \varkappa$. Then $\Op(\sigma)$ extends to a bounded operator on $\ell^2(\Zn)$.
\end{thm}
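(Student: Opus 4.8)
The plan is to rewrite $\Op(\sigma)$ as a discrete integral operator acting on $f$ directly, and then run a weighted Cauchy--Schwarz argument in which the decisive gain comes from Plancherel's identity on $\Tn$. First I would expand $\widehat{f}$ by \eqref{gael1} and integrate termwise, exactly as in the derivation of \eqref{EQ:kernels}, to obtain
$$
\Op(\sigma)f(k)=\sum_{\eta\in\Zn}\widehat{\sigma}(k,\eta)\,f(k+\eta),\qquad
\widehat{\sigma}(k,\eta):=\int_{\Tn}e^{-2\pi i\eta\cdot x}\sigma(k,x)\,dx,
$$
so that for each fixed $k$ the sequence $\eta\mapsto\widehat{\sigma}(k,\eta)$ consists of the toroidal Fourier coefficients of $\sigma(k,\cdot)\in C^\infty(\Tn)$. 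The whole point is that the hypothesis \eqref{EQ:l2conds} controls these coefficients not pointwise but in a weighted $\ell^2$-sense, uniformly in $k$.

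Next I would estimate, by Cauchy--Schwarz with the weight $(1+|\eta|)^{\varkappa}$,
$$
|\Op(\sigma)f(k)|^2\leq
\Big(\sum_{\eta\in\Zn}(1+|\eta|)^{2\varkappa}|\widehat{\sigma}(k,\eta)|^2\Big)
\Big(\sum_{\eta\in\Zn}(1+|\eta|)^{-2\varkappa}|f(k+\eta)|^2\Big).
$$
For the first factor I would use $(1+|\eta|)^{2\varkappa}\lesssim\sum_{|\gamma|\leq\varkappa}|\eta^{\gamma}|^2$ together with $\eta^{\gamma}\widehat{\sigma}(k,\eta)=(2\pi i)^{-|\gamma|}\widehat{\partial_x^{\gamma}\sigma}(k,\eta)$ and Plancherel on $\Tn$, giving
$$
\sum_{\eta\in\Zn}(1+|\eta|)^{2\varkappa}|\widehat{\sigma}(k,\eta)|^2
\lesssim\sum_{|\gamma|\leq\varkappa}\|\partial_x^{\gamma}\sigma(k,\cdot)\|_{L^2(\Tn)}^2
\leq C\sum_{|\gamma|\leq\varkappa}1 =: C_{\varkappa},
$$
uniformly in $k$ by \eqref{EQ:l2conds}. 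Summing the pointwise inequality over $k\in\Zn$ and exchanging the order of summation then yields
$$
\|\Op(\sigma)f\|_{\ell^2(\Zn)}^2
\leq C_{\varkappa}\Big(\sum_{\eta\in\Zn}(1+|\eta|)^{-2\varkappa}\Big)\,\|f\|_{\ell^2(\Zn)}^2,
$$
where the remaining series converges precisely because $2\varkappa>n$, i.e. $\varkappa>n/2$. This gives the bound on $\mathcal S(\Zn)$, and the extension to all of $\ell^2(\Zn)$ follows by density.

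The step I expect to be the crux is the first-factor estimate, and it is exactly there that the threshold $\varkappa>n/2$ (rather than $\varkappa>n$) is forced. A cruder approach---either the Schur test via the pointwise kernel bound $|\widehat{\sigma}(k,\eta)|\lesssim(1+|\eta|)^{-\varkappa}$ obtained by integrating \eqref{EQ:l2conds} by parts, or a triangle-inequality estimate for the decomposition of $\Op(\sigma)$ into the shift operators $f\mapsto\widehat{\sigma}(\cdot,\eta)\,f(\cdot+\eta)$---would require the summability of $(1+|\eta|)^{-\varkappa}$ over $\Zn$ and hence $\varkappa>n$. Replacing absolute values by Plancherel's identity is what recovers the sharp Sobolev exponent $n/2$; this is the substitute, in the present lattice setting where no decay is available in the frequency variable, for the $L^2$-orthogonality exploited in Calder\'on--Vaillancourt type arguments. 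Alternatively one could route the proof through Theorem \ref{THM:link}, reducing the claim to the $L^2(\Tn)$-boundedness of the toroidal operator $\Optn(\tau)$ with $\tau(x,k)=\overline{\sigma(-k,x)}$, but the direct argument above is shorter and keeps the dependence on \eqref{EQ:l2conds} explicit.
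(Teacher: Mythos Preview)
Your proof is correct and takes a genuinely different route from the paper's. The paper proves Theorem~\ref{THM:L2} by invoking the link formula \eqref{EQ:link1} of Theorem~\ref{THM:link} to transfer the question to $L^2(\Tn)$-boundedness of the toroidal operator $\Optn(\tau)$ with $\tau(x,k)=\overline{\sigma(-k,x)}$, and then citing the known toroidal result \cite[Theorem 4.8.1]{ruzhansky2009pseudo}. You instead give a direct, self-contained argument on $\ell^2(\Zn)$: expand via the kernel, apply weighted Cauchy--Schwarz, and control the weighted coefficient sum by Plancherel on $\Tn$. You even note the paper's alternative route at the end.

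What each approach buys: the paper's proof is shorter on the page and illustrates the utility of the lattice--torus link, but it outsources the actual analysis to an external reference. Your argument is essentially a transplant of the standard proof of that cited toroidal $L^2$-theorem into the lattice setting, which makes the dependence on the hypothesis \eqref{EQ:l2conds} and the threshold $\varkappa>n/2$ completely explicit (via convergence of $\sum_\eta(1+|\eta|)^{-2\varkappa}$), and avoids any black box. Your remark that a pointwise kernel bound or Schur test would only yield $\varkappa>n$ is apt and highlights why the Plancherel step is the crux. Both proofs are valid; yours is more informative within the paper, the paper's is more economical given the surrounding machinery.
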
 

\begin{proof}
Using the equality \eqref{EQ:link1} in Theorem \ref{THM:link} and the fact that the Fourier transform $\Fzn$ is an isometry from $\ell^2(\Zn)$ to $L^2(\Tn)$, 
it follows that $\Op(\sigma)\equiv \Opzn(\sigma)$ is bounded on $\ell^2(\Zn)$ if and only if
$\Optn(\tau)$ is bounded on $L^2(\Tn)$ for the toroidal symbol
$\tau(x,k)=\overline{\sigma(-k,x)}$. But $\Optn(\tau)$ is bounded on $L^2(\Tn)$ under conditions \eqref{EQ:l2conds} in view of \cite[Theorem 4.8.1]{ruzhansky2009pseudo}.
\end{proof}

\subsection{Compactness, Gohberg lemma, and Schatten-von Neumann classes}

In this section we give applications of the developed calculus to presenting conditions ensuring that the corresponding operators belong to Schatten classes. As usual, an operator is in the $p$-Schatten class if it is compact and if the sequence of its singular numbers is in $\ell^p$.

We start by giving a criterium for compactness of pseudo-differential operators acting on $\ell^2(\Zn)$ and an estimate for their essential spectrum. We recall that for a closed linear operator $A$ on a complex Hilbert space, its essential spectrum is defined as
\begin{equation}\label{EQ:ess}
\Sigma_{ess}(A)={\mathbb C}\backslash\{\lambda\in\mathbb{C}: A-\lambda I \textrm{ is Fredholm and its index is } 0\}. 
\end{equation} 
Then, since the compactness, Fredhomlness, and the index are preserved by unitary operators,
as a consequence of \eqref{EQ:link1} in Theorem \ref{THM:link} and \cite[Theorem 3.2]{DR:Gohberg} we obtain

\begin{cor}\label{COR:comp}
Let $\sigma\in S^0(\Zn\times\Tn)$. Define
\begin{equation}\label{EQ:dd}
d:=\limsup_{|k|\to\infty} \sup_{x\in\Tn} |\sigma(k,x)|.
\end{equation} 
Then
$$
\Opzn(\sigma)  \textrm{ is compact on } \ell^2(\Zn) \textrm{ if and only if }  d=0.
$$
Moreover, we have
$$
\Sigma_{ess}(\Opzn(\sigma)) \subset \{\lambda\in\mathbb{C}:|\lambda|\leq d\}.
$$
\end{cor} 

We also have the following estimates for the distance between a given operator and the space of compact operators on $\ell^2(\Zn)$. In view of \cite{Gohberg}, such a statement is often called Gohberg lemma in the literature, and see e.g. \cite{Mol11,Pir11} also for the results on the circle $\mathbb T^1$, and \cite{DR:Gohberg} for the Gohberg lemma on general compact Lie groups. 
For the following statement  we may recall the notions of generalised difference operators $\Delta_q$ from \eqref{deltaoperator12}.
As a consequence of \eqref{EQ:link1} in Theorem \ref{THM:link} and \cite[Theorem 3.1]{DR:Gohberg} we have

\begin{cor}\label{COR:compG}
Let $\sigma:\Zn\times\Tn\to\C$ be such that
\begin{equation}\label{EQ:Gcond}
|\sigma(k,x)|\leq C,\quad |\nabla_x\sigma(k,x)|\leq C,\quad |\Delta_q\sigma(k,x)|\leq C(1+|k|)^{-\rho},
\end{equation} 
for some $\rho>0$, for all $q\in C^\infty(\Tn)$ with $q(0)=0$ and all $(k,x)\in\Zn\times\Tn$.
Then for all compact operators $K$ on $\ell^2(\Zn)$ we have
$$
\|\Opzn(\sigma)-K\|_{\mathcal{L}(\ell^2(\Zn))}\geq \limsup_{|k|\to\infty} \sup_{x\in\Tn} |\sigma(k,x)|.
$$
In particular, this conclusion holds for any $\sigma\in S^0(\Zn\times\Tn).$
\end{cor}

The starting point for the analysis of the Schatten classes is the following condition ensuring the membership in $p$-Schatten classes for $2\leq p< \infty$. Thus for $2\leq p< \infty$ and $\frac1p+\frac1{p'}=1$ we have
 \begin{equation}\label{EQ:schpp}
\sum_{k\in\Zn} \|\sigma(k,\cdot)\|_{L^{p'}(\Tn)}^{p'}<\infty
\;\Longrightarrow \;
\Opzn(\sigma)\textrm{ is } p\textrm{-Schatten operator on } \ell^2(\Zn).
\end{equation}
In fact, \eqref{EQ:schpp} holds in much greater generality, in particular, on all locally compact separable unimodular groups of Type I, see \cite[Corollary 3.18]{Mantoiu-Ruzhansky-DM}. Essentially, it follows by complex interpolation between the Hilbert-Schmidt condition in Proposition \ref{PROP:HS} and the fact that operators with symbols satisfying
$\sum_{k\in\Zn} \|\sigma(k,\cdot)\|_{L^1(\Tn)}<\infty$ are bounded on $\ell^2(\Zn)$.

For $0< p\leq 2$, the membership of operators in $p$-Schatten classes is more difficult to ensure.
However, as a consequence of \eqref{EQ:link1} in Theorem \ref{THM:link} and \cite[Corollary 3.12]{DR-JMPA} we obtain the following statement.

\begin{thm}\label{COR:nuc}
 Let $0< p\leq 2$. Then we have
 \begin{equation}\label{EQ:sch}
\sum_{k\in\Zn} \|\sigma(k,\cdot)\|_{L^2(\Tn)}^p<\infty
\;\Longrightarrow \;
\Opzn(\sigma)\textrm{ is } p\textrm{-Schatten operator on } \ell^2(\Zn).
\end{equation}
In particular, if 
\begin{equation}\label{EQ:trcl}
\sum_{k\in\Zn} \|\sigma(k,\cdot)\|_{L^2(\Tn)}<\infty,
\end{equation} 
 then $\Opzn(\sigma)$ is a trace class operator on $\ell^2(\Zn)$, and in this case we have
 \begin{equation}\label{EQ:trace}
{\rm Tr}\,\p{ \Opzn(\sigma)} = \sum_{k\in\Zn} \int_{\mathbb{T}^n}\sigma(k, x)\text{d}x=\sum_j\lambda_j,
\end{equation}  
where $\{\lambda_j\}_j$ are the eigenvalues of $\Opzn(\sigma)$ counted with multiplicities.

\end{thm}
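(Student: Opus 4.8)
The plan is to derive both assertions from the unitary equivalence furnished by Theorem~\ref{THM:link}, combined with the known Schatten-class criterion for toroidal operators in \cite[Corollary 3.12]{DR-JMPA}. Recall from \eqref{EQ:link1} that $\Opzn(\sigma)=\Fzn^{-1}\circ \Optn(\tau)^*\circ \Fzn$, with $\tau(x,k)=\overline{\sigma(-k,x)}$, and that $\Fzn$ is unitary from $\ell^2(\Zn)$ onto $L^2(\Tn)$. Since membership in the $p$-Schatten class is determined solely by the sequence of singular values, and this sequence is unchanged both by composition with unitary operators and by passing to the adjoint, I would first record that $\Opzn(\sigma)$ is $p$-Schatten on $\ell^2(\Zn)$ if and only if $\Optn(\tau)$ is $p$-Schatten on $L^2(\Tn)$.

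Next I would translate the hypothesis through the symbol transformation. Because $\tau(x,k)=\overline{\sigma(-k,x)}$, we have $\|\tau(\cdot,k)\|_{L^2(\Tn)}=\|\sigma(-k,\cdot)\|_{L^2(\Tn)}$, and re-indexing $k\mapsto -k$ gives
$$\sum_{k\in\Zn}\|\tau(\cdot,k)\|_{L^2(\Tn)}^p=\sum_{k\in\Zn}\|\sigma(k,\cdot)\|_{L^2(\Tn)}^p.$$
Hence the summability assumption in \eqref{EQ:sch} is precisely the hypothesis of the toroidal criterion applied to $\tau$, which yields that $\Optn(\tau)$ is $p$-Schatten on $L^2(\Tn)$, and therefore $\Opzn(\sigma)$ is $p$-Schatten on $\ell^2(\Zn)$. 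The one point genuinely requiring care, and the main obstacle in turning this sketch into a proof, is to confirm that the criterion in \cite{DR-JMPA} is stated exactly in this ``$L^2$-in-$x$, $\ell^p$-in-$k$'' form across the full range $0<p\le 2$; once that is matched, the remaining argument is purely bookkeeping through the link formula.

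For the trace identity I would argue directly on the lattice rather than through the toroidal side, which is cleaner. Taking $p=1$ in \eqref{EQ:sch}, condition \eqref{EQ:trcl} makes $\Opzn(\sigma)$ trace class, so its trace equals $\sum_{k\in\Zn}(\Opzn(\sigma)w_k,w_k)_{\ell^2(\Zn)}$ evaluated in the standard orthonormal basis $\{w_k\}$ of Proposition~\ref{PROP:HS}. Since $\Opzn(\sigma)$ acts through the kernel $K(k,m)$ of \eqref{EQ:kernels}, one computes $(\Opzn(\sigma)w_k,w_k)_{\ell^2(\Zn)}=K(k,k)$, and by the diagonal evaluation \eqref{EQ:kerdiag} this equals $\int_{\Tn}\sigma(k,x)\,\text{d}x$. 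Summing over $k\in\Zn$ gives the middle equality in \eqref{EQ:trace}. Finally, the identification $\sum_{k\in\Zn}\int_{\Tn}\sigma(k,x)\,\text{d}x=\sum_j\lambda_j$ is Lidskii's theorem, equating the trace of a trace-class operator with the sum of its eigenvalues counted with multiplicity.
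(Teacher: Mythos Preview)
Your overall strategy matches the paper's: use the link formula \eqref{EQ:link1} to transfer to the toroidal side and invoke \cite{DR-JMPA}, and compute the trace via the diagonal kernel \eqref{EQ:kerdiag} plus Lidskii. The trace-formula argument is exactly what the paper does.

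The concern you flag is real, and this is where your sketch has a gap. The result \cite[Corollary 3.12]{DR-JMPA} is a statement about $r$-\emph{nuclearity}, and it is stated only for $0<r\le 1$; it does not directly deliver a $p$-Schatten conclusion across $0<p\le 2$. The paper handles this in two pieces. For $0<p\le 1$, the cited corollary gives $p$-nuclearity of $\Optn(\tau)$ (hence of $\Opzn(\sigma)$), and one then invokes the fact (Oloff \cite{Oloff72}, Pietsch \cite[Section 6.3.2.11]{Pietsch-history}) that on a Hilbert space $p$-nuclearity and $p$-Schatten membership coincide. For $1\le p\le 2$, the paper does \emph{not} go through \cite{DR-JMPA} at all: it interpolates between the endpoint $p=1$ (trace class, just established) and $p=2$ (Hilbert-Schmidt, from Proposition~\ref{PROP:HS}, where the condition $\sum_k\|\sigma(k,\cdot)\|_{L^2(\Tn)}^2<\infty$ is exactly $\|\sigma\|_{L^2(\Zn\times\Tn)}^2<\infty$). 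So to complete your argument you should (i) note the nuclearity/Schatten identification on Hilbert spaces for $0<p\le 1$, and (ii) replace the appeal to \cite{DR-JMPA} in the range $1<p\le 2$ by complex interpolation between the trace-class and Hilbert--Schmidt endpoints.
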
 

\begin{proof}[Proof of Theorem \ref{COR:nuc}]
The conclusion \eqref{EQ:sch} for $0<p\leq 1$ is an immediate consequence of \eqref{EQ:link1} and \cite[Corollary 3.12]{DR-JMPA}, which shows the $p$-nuclearity of $\Opzn(\sigma)$ on $\ell^2(\Zn)$.
Since the notions of $p$-nuclearity and $p$-Schatten classes coincide for Hilbert space (see Oloff \cite{Oloff72}
or Pietsch \cite[Section 6.3.2.11]{Pietsch-history}) we get \eqref{EQ:sch} for $0<p\leq 1$.

As a special case with $p=1$, the operators satisfying \eqref{EQ:trcl} are trace class. 
The first equality in \eqref{EQ:trace} follows from the expression for the kernel at the diagonal given 
in \eqref{EQ:kerdiag}, and the second equality in \eqref{EQ:trace} is the famous Lidskii formula \cite{Lidskii59}.

Consequently, \eqref{EQ:sch} for $1\leq p\leq 2$ follows by interpolation between \eqref{EQ:trcl} and the Hilbert-Schmidt condition \eqref{EQ:HS}. 
\end{proof}

The notion of $r$-nuclearity was introduced and developed by Grothedieck in \cite{Groth-MAMS}.
We can refer e.g. to \cite{DR-JMPA} for the discussion of $r$-nuclearity and its meaning and consequences, and to \cite{Pietsch-history} for an extensive presentation and the history. The direct $r$-nuclearity considerations in our setting appear to be more difficult than those when the space is compact (\cite{DRT-JMPA}) because the kernel does not allow for a natural discrete tensor product decomposition.

\subsection{Weighted $\ell^2$-boundedness}

  For $s\in \R$ and $1\leq p<\infty$ let us define the weighted space $\ell^p_s(\Zn)$ as the space of all $f:\Zn\to\C$ such that 
   \begin{equation}\label{EQ:l2s}
 \|f\|_{\ell^p_s(\Zn)}:=\left(\sum_{k\in\Zn} (1+|k|)^{sp} |f(k)|^p\right)^{1/p}<\infty.
\end{equation} 
   We observe that the symbol $a_s(k)=(1+|k|)^{s}$ belongs to $S^s_{1,0}(\Zn\times\Tn)$, and we have
   $f\in \ell^p_s(\Zn)$ if and only if $\Op(a_s) f\in \ell^p(\Zn)$. Consequently, we have
   \begin{equation}\label{EQ:l2s}
 \ell^p_s(\Zn)=\Op(a_{-s})(\ell^p(\Zn)).
\end{equation} 
    Then 
   Theorem \ref{THM:L2} and Theorem \ref{THM:comp} imply the following boundedness results.
   
     \begin{cor}\label{COR:L2}
   Let $\mu\in\R$ and let $\sigma\in S^\mu_{0,0}(\Zn\times\Tn)$. Then $\Op(\sigma)$ is bounded from  
   $\ell^2_{s}(\Zn)$ to $\ell^2_{s-\mu}(\Zn)$ for all $s\in\R$.
      \end{cor}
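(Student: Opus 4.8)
The plan is to reduce the weighted boundedness of $\Op(\sigma)$ to the ordinary $\ell^2$-boundedness result of Theorem \ref{THM:L2} by conjugating with the weight operators $\Op(a_s)$, exploiting the algebraic structure \eqref{EQ:l2s} that identifies $\ell^2_s(\Zn)$ as the image of $\ell^2(\Zn)$ under $\Op(a_{-s})$. Concretely, since $a_s(k)=(1+|k|)^s\in S^s_{1,0}(\Zn\times\Tn)$ (which a fortiori lies in $S^s_{0,0}$), the operators $\Op(a_s)$ are invertible up to smoothing with inverse having symbol asymptotic to $a_{-s}$; I will treat $\Op(a_s)$ and $\Op(a_{-s})$ as mutually inverse modulo a smoothing remainder, using Theorem \ref{THM:elliptic} applied to the elliptic symbol $a_s$.

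The key steps, in order, are as follows. First I would observe that, by the defining relation \eqref{EQ:l2s}, showing $\Op(\sigma):\ell^2_s(\Zn)\to\ell^2_{s-\mu}(\Zn)$ is equivalent to showing that the conjugated operator
\begin{equation*}
B:=\Op(a_{s-\mu})\circ\Op(\sigma)\circ\Op(a_{-s})
\end{equation*}
is bounded on $\ell^2(\Zn)$. Second, I would apply the composition formula of Theorem \ref{THM:comp} twice: composing $a_{s-\mu}\in S^{s-\mu}_{0,0}$, $\sigma\in S^\mu_{0,0}$, and $a_{-s}\in S^{-s}_{0,0}$ gives a symbol for $B$ in the class $S^{(s-\mu)+\mu+(-s)}_{0,0}=S^0_{0,0}(\Zn\times\Tn)$, since the orders add under composition. (Here one uses that $S^s_{1,0}\subset S^s_{0,0}$, so all three factors lie in the $\rho=\delta=0$ calculus, which is closed under composition by Theorem \ref{THM:comp}; note $0=\delta<\rho$ is not satisfied when $\rho=\delta=0$, so this is the delicate point — see below.) Third, any symbol in $S^0_{0,0}(\Zn\times\Tn)$ satisfies the hypothesis \eqref{EQ:l2conds} of Theorem \ref{THM:L2}, because $|\partial_x^\beta \varsigma(k,x)|$ is controlled by $(1+|k|)^{0}=1$ for every $\beta$; hence $B$ is bounded on $\ell^2(\Zn)$, which is what we want.

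The main obstacle is that Theorem \ref{THM:comp} as stated requires $0\le\delta<\rho\le1$, which fails precisely in the borderline class $\rho=\delta=0$ that naturally arises here. I expect the cleanest way around this is to avoid invoking the full composition calculus and instead argue more directly: since $\Op(a_s)$ acts as multiplication by $(1+|k|)^s$ on the space side (it is a Fourier multiplier in $k$, diagonal in the standard basis), conjugation by it is transparent, and one checks by hand that $a_{s-\mu}(k)\,\sigma(k,x)\,a_{-s}(k)=(1+|k|)^{-\mu}\sigma(k,x)$ times the effect of the difference operators, whose $x$-derivatives are bounded uniformly in $k$ under the $S^\mu_{0,0}$ hypothesis. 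I would verify directly that the resulting symbol satisfies \eqref{EQ:l2conds}, using the Leibniz rule for the difference operators (from \cite{ruzhansky2009pseudo}) to control how $\Delta^\alpha_k$ interacts with the weight, thereby sidestepping the $\rho>\delta$ restriction entirely and appealing only to Theorem \ref{THM:L2}.
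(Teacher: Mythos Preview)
Your approach is essentially the paper's: conjugate $\Op(\sigma)$ by the weight operators to produce $B=\Op(a_{s-\mu})\Op(\sigma)\Op(a_{-s})$, show $B\in\Op(S^0_{0,0})$, and invoke Theorem~\ref{THM:L2}. Two remarks on the execution.

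First, you do not need Theorem~\ref{THM:elliptic} or any ``inverse modulo smoothing'' argument for the weights. Since $a_s(k)=(1+|k|)^s$ is independent of $x$, the operator $\Op(a_s)$ is \emph{exactly} pointwise multiplication by $(1+|k|)^s$, so $\Op(a_s)\Op(a_{-s})=I$ on the nose. This also means the left factor in $B$ causes no trouble: $\Op(a_{s-\mu})\Op(\sigma)=\Op(a_{s-\mu}\sigma)$ exactly, because $D_x^{(\alpha)}a_{s-\mu}=0$ for $|\alpha|\ge1$ in the composition formula.

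Second, the paper resolves the $\rho=\delta=0$ obstacle more cleanly than your direct-verification plan: one simply notes that $a_{-s}\in S^{-s}_{1,0}$, so in the expansion \eqref{EQ:comp} for $\Op(a_{s-\mu}\sigma)\circ\Op(a_{-s})$ the factor $\Delta_k^\alpha a_{-s}$ gains a full $(1+|k|)^{-|\alpha|}$, while $D_x^{(\alpha)}(a_{s-\mu}\sigma)$ stays in $S^{s}_{0,0}$. Thus each term lands in $S^{-|\alpha|}_{0,0}$ and the asymptotic sum is meaningful despite $\sigma$ being only $(0,0)$-type. Your Leibniz-rule workaround would ultimately rediscover this fact, but the observation that one factor lies in the better class $S_{1,0}$ is the efficient way to phrase it.
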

      \begin{proof}
  If $A\in \Op(S^\mu_{0,0}(\Zn\times\Tn))$ then by using Theorem \ref{THM:comp} the operator $$B=\Op(a_{s-\mu})A\Op(a_{-s})$$ has symbol in $S^{0}_{0,0}(\Zn\times\Tn)$. Here we observe that we can also include the case $\rho=\delta=0$ in the statement of Theorem \ref{THM:comp} since 
  actually $a_s\in S^s_{1,0}(\Zn\times\Tn)$ so that the asymptotic formulae work also in this case.
  
 Let $f\in \ell^2_{s}(\Zn)$. Then $g:=\Op(a_{s})f\in \ell^2(\Zn)$. By Theorem \ref{THM:L2} the operator $B$ is bounded on $\ell^2(\Zn)$, so that $Bg\in \ell^2(\Zn)$.
 Now, writing
  $$Af=\Op(a_{\mu-s})\Op(a_{s-\mu})A\Op(a_{-s})\Op(a_{s})f=\Op(a_{\mu-s}) B g,$$
  we get that $Af\in \Op(a_{\mu-s}) \ell^2(\Zn)=\ell^2_{s-\mu}(\Zn)$ in view of 
  \eqref{EQ:l2s}.
 Consequently, $A$ is bounded from $\ell^2_{s}(\Zn)$ to $\ell^2_{s-\mu}(\Zn)$.
\end{proof}

 \subsection{G{\aa}rding and sharp G{\aa}rding inequalities on $\mathbb Z^n$}
 
First, let us recall a special case of the G{\aa}rding inequality on compact Lie groups, in the special case of the torus $\mathbb{T}^n$, as in \cite[Corollary 6.2]{Ruzhansky-Wirth:functional-calculus}, stating:  
\begin{cor}\label{A}
Let $0\leq \delta < \rho \leq 1$ and $m > 0$. Let $ B \in \Op _{\mathbb{T}^n}{S}^{2m}_{\rho,\delta}(\mathbb{T}^n \times \mathbb{Z}^n)$ be elliptic  such that $\sigma_B (x,k) \geq  0$, for all $x$ and co-finitely many $k$.
Then there exist  $C_0,C_1 > 0$ such that for all $f \in H^m(\mathbb{T}^n)$ we have
\begin{equation*}
{\rm Re} (Bf,f)_{L^2(\mathbb{T}^n)} \geq C_0||f||^{2}_{H^m(\mathbb{T}^n)}-C_1||f||^{2}_{L^2(\mathbb{T}^n)}.
\end{equation*}
\end{cor}
Let us show that Corollary \ref{A} implies the corresponding G{\aa}rding inequality on $\mathbb{Z}^n$. As there is no regularity concept on the lattice, the statement is given in terms of weighted $\ell^2$-spaces.

\begin{thm}[G{\aa}rding inequality on $\mathbb{Z}^n$]\label{THM:Garding}
Let $0\leq \delta < \rho \leq 1$ and $m > 0$. Let $ A \in \Op _{\mathbb{Z}^n} {S}^{2m}_{\rho,\delta}(\mathbb{Z}^n \times \mathbb{T}^n)$ be elliptic  such that $\sigma_A (k,x) \geq  0$ for all  $x$ and for co-finitely many $k$.
Then there exist  $C_1,C_2 > 0$ such that for all $f \in \ell^{2}_m(\mathbb{T}^n)$  we have
\begin{equation}\label{gard 1}
{\rm Re} (Af,f)_{\ell^2(\mathbb{Z}^n)} \geq C_0||f||^{2}_{\ell^{2}_m(\mathbb{Z}^n)}-C_1||f||^{2}_{\ell^2(\mathbb{Z}^n)}.
\end{equation}
\end{thm}
\begin{proof}
Let $\tau( x,k) =  \overline{\sigma_A(-k,x) } $.
Then we have  
\begin{equation}\label{gard 2}
A = \Op_{\mathbb{Z}^n}(\sigma_A) = \mathcal{F}^{-1}_{\mathbb{Z}^n} \circ \Op _{\mathbb{T}^n}(\tau)^* \circ \mathcal{F}_{\mathbb{Z}^n},
\end{equation} 
by Theorem \ref{THM:link}, and
if  $\sigma_A $ is elliptic on  $\mathbb{T}^n$,  then  $\tau $ is elliptic on  $\mathbb{Z}^n$. Also, if $\sigma_A \geq 0 $, then $\tau \geq 0 $. Then by  Corollary \ref{A}, for all $g \in H^m(\mathbb{T}^n)$ we have 
\begin{multline}\label{gard 3}
{\rm Re}( \Op_{\mathbb{T}^n}(\tau)^* g,g)_{L^2(\mathbb{T}^n)}  \\
={\rm Re}( \Op_{\mathbb{T}^n}(\tau)g,g)_{L^2(\mathbb{T}^n)} \geq   C_0||g||^{2}_{H^{m}(\mathbb{T}^n)}-C_1||g||^{2}_{L^2(\mathbb{T}^n)}.
\end{multline} 
Let $g$ be the Fourier transform of the function $f$, that is $g = \mathcal{F}_{\mathbb{Z}^n}f$.
If $f \in H^m (\mathbb{T}^n)$ then $g \in \ell^{2}_m(\mathbb{Z}^n)$ and 
\begin{equation}\label{gard 4}
\| f\|_{ H^m (\mathbb{T}^n)} = \| f\|_{\ell^{2}_m(\mathbb{Z}^n)} \qquad \text{and} \qquad \| f\|_{ L^2 (\mathbb{T}^n)} = \|f\|_{\ell^{2}(\mathbb{Z}^n)}.
\end{equation} 
By Theorem  \ref{THM:link},
\begin{equation}\label{gard 41}
Af = \mathcal{F}^{-1}_{\mathbb{Z}^n} \circ \Op _{\mathbb{T}^n}(\tau)^* \circ \mathcal{F}_{\mathbb{Z}^n} f =  \mathcal{F}^{-1}_{\mathbb{Z}^n} \circ \Op _{\mathbb{T}^n}(\tau)^*  g,
\end{equation} 
so that $  \mathcal{F}_{\mathbb{Z}^n}Af =   \Op _{\mathbb{T}^n}(\tau)^*  g $. 
Substituting \eqref{gard 4} into \eqref{gard 3} we get 
\begin{eqnarray*}
{\rm Re}( \Op_{\mathbb{T}^n}(\tau)^* g,g)_{L^2(\mathbb{T}^n)} & \geq &   C_0||f||^{2}_{\ell^{2}_m(\mathbb{Z}^n)}-C_1||f||^{2}_{\ell^2(\mathbb{Z}^n)},\\
{\rm Re}( \mathcal{F}_{\mathbb{Z}^n} Af, \mathcal{F}_{\mathbb{Z}^n} f)_{L^2(\mathbb{T}^n)}  & \geq &   C_0||f||^{2}_{\ell^{2}_m(\mathbb{Z}^n)}-C_1||f||^{2}_{\ell^2(\mathbb{Z}^n)},\\
{\rm Re}( \mathcal{F}_{\mathbb{Z}^n}^* \mathcal{F}_{\mathbb{Z}^n} Af,  f)_{\ell^2(\mathbb{Z}^n)}  & \geq &   C_0||f||^{2}_{\ell^{2}_m(\mathbb{Z}^n)}-C_1||f||^{2}_{\ell^2(\mathbb{Z}^n)} \qquad \text {since} \ \mathcal{F}_{\mathbb{Z}^n}^* \mathcal{F}_{\mathbb{Z}^n} = Id, \\
{\rm Re}( Af,f)_{\ell^2(\mathbb{Z}^n)}  & \geq &   C_0||f||^{2}_{\ell^{2}_m(\mathbb{Z}^n)}-C_1||f||^{2}_{\ell^2(\mathbb{Z}^n)}.
\end{eqnarray*}
This completes the proof.
\end{proof}

We now proceed by establishing the sharp G{\aa}rding inequality on $\mathbb{Z}^n$.
Let us recall a special case of the sharp G{\aa}rding inequality on compact Lie groups, in the setting of the torus, see  \cite[Theorem 2.1]{ruzhansky2011sharp}:
\begin{thm}[Sharp G{\aa}rding inequality on $\mathbb{T}^n$]\label{THM:sG}
Let $B \in \Op_{\mathbb{T}^n}{S}^m(\mathbb{T}^n \times \mathbb{Z}^n)$ be such that its symbol ${\sigma}(x,k) \geq 0$ for all $(x,k) \in \mathbb{T}^n \times \mathbb{Z}^n$. Then there exists $C < \infty$ such that we have
\[{\rm Re}( Bg,g)_{L^2(\mathbb{T}^n)} \geq -C\|g\|_{H^{\frac{m-1}{2}}(\mathbb{T}^n)},\]
for all $g \in H^{\frac{m-1}{2}}(\mathbb{T}^n)$.
\end{thm}

Similarly, extending this theorem to thelattice we have the corresponding result:
\begin{thm}[Sharp G\"{a}rding inequality on $\mathbb{Z}^n$]
Let $A \in \Op_{\mathbb{Z}^n}{S}^m(\mathbb{Z}^n \times \mathbb{T}^n)$ be such that its symbol satisfies $\sigma_A(k,x) \geq 0$ for all $(k,x) \in \mathbb{Z}^n \times \mathbb{T}^n$. Then there exists $C < \infty$ such that  we have
\[{\rm Re}( Af,f)_{\ell^2(\mathbb{Z}^n)} \geq -C\|f\|_{\ell^{2}_{\frac{m-1}{2}}(\mathbb{Z}^n)}\]
for all $f \in \ell^{2}_{\frac{m-1}{2}}(\mathbb{Z}^n)$.
\end{thm}
\begin{proof}
Let $\tau( x,k) =  \overline{\sigma_A(-k,x) } $.
Then by Theorem \ref{THM:link} we have  
\begin{equation}\label{gard 2}
A = \Op_{\mathbb{Z}^n}(\sigma_A) = \mathcal{F}^{-1}_{\mathbb{Z}^n} \circ \Op _{\mathbb{T}^n}(\tau)^* \circ \mathcal{F}_{\mathbb{Z}^n}.
\end{equation}
Employing the same argument and  notation as in the proof of Theorem \ref{THM:Garding}, using Theorem \ref{THM:sG}, we get
\begin{eqnarray*}
{\rm Re}(Af,f)_{\ell^2(\mathbb{Z}^n)} &=& Re( \mathcal{F}^{-1}_{\mathbb{Z}^n}  \Op_{\mathbb{T}^n}(\tau)^* g, \mathcal{F}^{-1}_{\mathbb{Z}^n} g)_{\ell^2(\mathbb{Z}^n)}\\
&=& {\rm Re}( \Op_{\mathbb{T}^n}(\tau)^* g,g)_{L^2(\mathbb{T}^n)}\\
&=& {\rm Re}( \Op_{\mathbb{T}^n}(\tau) g,g)_{L^2(\mathbb{T}^n)}  \\
&\geq & -C\|g\|_{H^{\frac{m-1}{2}}(\mathbb{T}^n)}\\
&=& -C\|f\|_{\ell^{2}_{\frac{m-1}{2}}(\mathbb{Z}^n)},
\end{eqnarray*}
finishing the proof. 
\end{proof}

 \subsection{Well-posedness of the parabolic equations}
 
As a consequence of G{\aa}rding inequalities on the lattice, we obtain the existence and uniqueness of solutions of the parabolic equation on the lattice.
\begin{thm}\label{THM:parabolic}
Let $A \in \Op_{\mathbb{Z}^n}{S}^{m}_{1,0}(\mathbb{Z}^n \times \mathbb{T}^n)$, $m>0$.
Assume that  there exist $C_0 >0$ and  $ d_0 >0$ such that for all $x\in\mathbb T^n$, we have
\begin{equation}\label{gard 5}
 -\sigma_A(k,x) \geq C_0|k|^m  \qquad \text{for} \ |k| \geq d_0.
 \end{equation}
Let $T>0$. Then for every $u_0\in \ell^2(\mathbb{Z}^n)$ and $f \in L^1([0,T],\ell^2(\mathbb{Z}^n))$, the equation
\begin{equation}\label{42}
\begin{cases}
\frac{\partial u}{\partial t} - Au &= f, \qquad t\in [0,T],\\
u(0)  &= u_0,
\end{cases}
\end{equation}  
has a unique solution $u \in C([0,T],\ell^2(\mathbb{Z}^n))$.
Moreover, there exists $C>0$ such that for all $t\in [0,T]$ we have
\begin{equation}\label{gard 6}
\|u(t)\|^{2}_{\ell^2(\mathbb{Z}^n)} \leq C\Big(\|u_0\|^{2}_{\ell^2(\mathbb{Z}^n)} +  \int^{t}_{0} \|f(\tau)\|^{2}_{\ell^2(\mathbb{Z}^n)} d\tau\Big).
 \end{equation}
\end{thm}
\begin{proof}
Suppose that $u$ satisfies  the condition \eqref{42}. We first observe that \eqref{gard 5} implies that there exists $C'_0 >0$ such that
\begin{equation*}
 |-(\sigma_A + \sigma_A^*)(k,x)| \geq C'_0|k|^m  \qquad \text{for} \ |k| \geq d_0.
 \end{equation*}
 Then by the G{\aa}rding inequality in Theorem \ref{THM:Garding} and Theorem \ref{THM:adjoint} about adjoint operators, we have  
 \begin{equation}\label{gard 7}
 -\Big( (A+A^*)u,u\Big)_{\ell^2(\mathbb{Z}^n)} \geq C_1||u||^{2}_{\ell^{2}_{\frac{m}{2}}(\mathbb{Z}^n)}-C_2||u||^{2}_{\ell^2(\mathbb{Z}^n)}.
 \end{equation}
 
 Then by \eqref{42} we have 
 \begin{eqnarray*}
 \frac{d }{dt} ||u||^{2}_{\ell^2(\mathbb{Z}^n)} &=&  \frac{d }{dt}\bigg(u(t),u(t)\bigg)_{\ell^2(\mathbb{Z}^n)}\\
  &=& \bigg( \frac{d }{dt},u\bigg)_{\ell^2(\mathbb{Z}^n)} + \bigg(u, \frac{d }{dt}\bigg)_{\ell^2(\mathbb{Z}^n)} \\
&=& \bigg(Au +f,u \bigg)_{\ell^2(\mathbb{Z}^n)} + \bigg(u,Au +f \bigg)_{\ell^2(\mathbb{Z}^n)}\\
&=& \bigg((A+A^*)u ,u \bigg)_{\ell^2(\mathbb{Z}^n)} + 2{\rm Re}(u,f)_{\ell^2(\mathbb{Z}^n)} \qquad \text{by \  \eqref{gard 7} \ we \ have}\\
&\leq & -C_1||u(t)||^{2}_{\ell^{2}_{\frac{m}{2}}(\mathbb{Z}^n)}+C_2||u(t)||^{2}_{\ell^2(\mathbb{Z}^n)} + ||u(t)||^{2}_{\ell^2(\mathbb{Z}^n)} + ||f||^{2}_{\ell^2(\mathbb{Z}^n)}\\
&\leq & (C_2 +1)||u(t)||^{2}_{\ell^2(\mathbb{Z}^n)} + ||f||^{2}_{\ell^2(\mathbb{Z}^n)}.
\end{eqnarray*}
 Thus by Gronwall's lemma we have that there exists $C >0$ such that 
 \begin{equation*}
\|u(t)\|^{2}_{\ell^2(\mathbb{Z}^n)} \leq C\Big(\|u_0\|^{2}_{\ell^2(\mathbb{Z}^n)} +  \int^{T}_{0} \|f(\tau)\|^{2}_{\ell^2(\mathbb{Z}^n)} d\tau\Big).
 \end{equation*}
 This proves \eqref{gard 6}.
 
 By the standard Picard iteration scheme it the follows that \eqref{42} has a solution $u \in C([0,T],\ell^2(\mathbb{Z}^n))$.
 
 Next we prove the uniqueness. For this, let $u,v$ be two solutions of \eqref{42}, that is 
 \begin{equation*}
 \begin{cases}
\frac{\partial u}{\partial t} - Au &= f, \; t\in [0,T],\\
u(0) &= u_0,
\end{cases}
 \end{equation*}
 \begin{equation*}
 \begin{cases}
\frac{\partial v}{\partial t} - Av &= f, \; t\in [0,T],\\
v(0) &= u_0.
\end{cases}
 \end{equation*}
 
By this, setting $w:= u-v$, we have
\begin{equation*}
 \begin{cases}
\frac{\partial w}{\partial t} - Aw &= 0, \; t\in [0,T],\\
w(0) &= 0.
\end{cases}
 \end{equation*} 
According to \eqref{gard 6}, we then must have $\|w(t)\|_{\ell^2(\mathbb{Z}^n)} = 0$ for all $t \in [0,T]$.
Therefore  this implies 
\[0=\|w(t)\|_{\ell^2(\mathbb{Z}^n)} = \|u(t)-v(t)\|_{\ell^2(\mathbb{Z}^n)}  \quad \implies  u(t) = v(t) \quad \text{for \ all} \;t\in [0,T],\]
completing the proof.
\end{proof}

\subsection{Boundedness and compactness on $\ell^p(\Zn)$}

   The following statement gives a condition for the $\ell^{p}(\Zn)$-boundedness of pseudo-differential operators on $\mathbb Z^n$ in terms of their symbols: here one asks for decay of irregular symbol; the regular case is given in \eqref{EQ:Lp-bound} below.
   Let
   \begin{equation}\label{EQ:Ftn}
(\mathcal{F}_{\mathbb T^n}\sigma)(k,m):=\int_\Tn e^{- 2\pi i m\cdot x }\sigma(k,x)\text{d}x.
\end{equation} 
   We note that it was shown in \cite{CR2011} that if $\mathcal{F}_{\mathbb T^n}\sigma\in \ell^q(\Zn\times\Zn)$ then $\Op(\sigma):\ell^p(\Zn)\to\ell^p(\Zn)$ is bounded provided that    $2\leq p<\infty$ and $\frac1p+\frac1q=1$.
   
   Moreover, if, in general, $\Op(\sigma):\ell^p(\Zn)\to\ell^r(\Zn)$ is bounded, then
   for every $m\in\Zn$ the function $(\mathcal{F}_{\mathbb T^n}\sigma)(k,m-k)$ must be in $\ell^r(\Zn)$ as a function of $k$. This follows since the latter condition is equivalent to saying that $\Op(\sigma)w_m\in \ell^r(\Zn)$ for all $m\in\Zn$, for functions $w_m$ such that $w_{m}(l)=\delta_{ml}$, in view of $(\mathcal{F}_{\mathbb T^n}\sigma)(k,m-k)=\Op(\sigma)w_m(k)$, see \eqref{EQ:www}.
   
   Also, it is known that for $1\leq p\leq\infty$, for amplitude operators with amplitude $a$, we have the following analogue of the Calderon-Vaillancourt theorem:
   \begin{equation}\label{EQ:Lp-bound}
\|\Op(a)\|_{\mathscr{L}(\ell^{p}(\mathbb{Z}^n))} \leq C\sup_{(k,l,x)\in\Zn\times\Zn\times\Tn, |\alpha|\leq n+1} |\partial_x^\alpha a(k,l,x)|,
\end{equation} 
 see  \cite[Theorem 2.8]{RR-AAM}.
   
   \begin{proposition}\label{compact} Let $1\leq p< \infty.$ 
   Let $\sigma:\mathbb{Z}^n\times \mathbb{T}^n\to\C$ be a measurable function. Assume that there is a positive constant $C>0$ and a function $\omega\in \ell^{1}(\mathbb Z^n)$ such that
   \[|(\mathcal{F}_{\mathbb T^n}\sigma)(k,m)| \leq C|\omega(m)|, \quad \textrm{ for all }\; k,m \in \mathbb{Z}^n,\]
   where $\mathcal{F}_{\mathbb T^n}\sigma$ is the Fourier transform of $\sigma$ in the second variable.
   Then $\Op(\sigma): \ell^{p}(\mathbb{Z}^n)\rightarrow \ell^{p}(\mathbb{Z}^n)$ is a bounded linear operator and 
   \[\|\Op(\sigma)\|_{\mathscr{L}(\ell^{p}(\mathbb{Z}^n))} \leq C\|\omega\|_{\ell^{1}(\mathbb{Z}^n)}. \]
    \end{proposition}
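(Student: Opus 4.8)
The plan is to realise $\Op(\sigma)$ as an integral operator on $\Zn$ whose kernel is dominated by a translation-invariant (convolution) kernel lying in $\ell^1(\Zn)$, and then to apply Young's inequality for discrete convolutions. First I would recall the kernel representation derived just before \eqref{EQ:kernels}: for $f\in\mathcal{S}(\Zn)$,
\[
\Op(\sigma)f(k)=\sum_{m\in\Zn}K(k,m)f(m)=\sum_{l\in\Zn}\kappa(k,l)f(k-l),
\]
where $K(k,m)=\kappa(k,k-m)$ and $\kappa(k,l)=\int_{\Tn}e^{2\pi i l\cdot x}\sigma(k,x)\,\text{d}x$. Comparing with \eqref{EQ:Ftn} and \eqref{EQ:www}, this kernel is exactly a value of the partial Fourier transform $\mathcal{F}_{\mathbb T^n}\sigma$, namely $K(k,m)=(\mathcal{F}_{\mathbb T^n}\sigma)(k,m-k)$. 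Hence the hypothesis furnishes the pointwise bound $|K(k,m)|\le C|\omega(m-k)|$, exhibiting $K$ as dominated by a convolution kernel in $\ell^1(\Zn)$.

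Second, I would use this bound to reduce the estimate to a pure convolution. Estimating term by term gives
\[
|\Op(\sigma)f(k)|\le\sum_{m\in\Zn}|K(k,m)|\,|f(m)|\le C\sum_{m\in\Zn}|\omega(m-k)|\,|f(m)|=C\,(|\widetilde\omega|*|f|)(k),
\]
where $\widetilde\omega(j)=\omega(-j)$ and $*$ denotes convolution on $\Zn$. Applying Young's inequality for discrete convolutions, $\||h|*|f|\|_{\ell^p(\Zn)}\le\|h\|_{\ell^1(\Zn)}\|f\|_{\ell^p(\Zn)}$, with $h=\widetilde\omega$ and noting $\|\widetilde\omega\|_{\ell^1(\Zn)}=\|\omega\|_{\ell^1(\Zn)}$, I obtain
\[
\|\Op(\sigma)f\|_{\ell^p(\Zn)}\le C\|\omega\|_{\ell^1(\Zn)}\|f\|_{\ell^p(\Zn)}.
\]
Since $\mathcal{S}(\Zn)$ is dense in $\ell^p(\Zn)$ for $1\le p<\infty$, the operator extends to a bounded operator on $\ell^p(\Zn)$ with the asserted norm bound.

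I do not expect a substantial analytic obstacle; the work is essentially bookkeeping. The two points that genuinely require care are: (i) justifying the kernel representation by absolute convergence, which is exactly where $\omega\in\ell^1(\Zn)$ together with the rapid decay of $f\in\mathcal{S}(\Zn)$ permits interchanging the sum over $m$ with the integral over $\Tn$; and (ii) tracking the reflection in the convolution variable so that the dominating kernel has $\ell^1$-mass precisely $\|\omega\|_{\ell^1(\Zn)}$. As an alternative to Young's inequality one could run the Schur test directly on $K$: the same bound yields $\sup_{k}\sum_{m}|K(k,m)|\le C\|\omega\|_{\ell^1(\Zn)}$ and $\sup_{m}\sum_{k}|K(k,m)|\le C\|\omega\|_{\ell^1(\Zn)}$, giving boundedness on every $\ell^p(\Zn)$ with the same constant by interpolation between $\ell^1$ and $\ell^\infty$; Young's inequality is simply the most economical route to the explicit constant $C\|\omega\|_{\ell^1(\Zn)}$.
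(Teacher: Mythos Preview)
Your approach is exactly the paper's: rewrite $\Op(\sigma)f$ via its kernel, recognise the kernel as (a reflection of) $\mathcal{F}_{\Tn}\sigma$, dominate by a convolution with $|\omega|$, apply Young's inequality, and extend by density from $\mathcal{S}(\Zn)$ (the paper starts from $\ell^1(\Zn)$, which is equivalent here).

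One point deserves flagging. From $K(k,m)=(\mathcal{F}_{\Tn}\sigma)(k,m-k)$ the \emph{stated} hypothesis $|(\mathcal{F}_{\Tn}\sigma)(k,m)|\le C|\omega(k)|$ yields $|K(k,m)|\le C|\omega(k)|$, not your $|K(k,m)|\le C|\omega(m-k)|$; with that bound the Young/Schur argument fails (the row sums $\sum_m|K(k,m)|$ are uncontrolled), and indeed the proposition is false as literally written. Your inequality $|K(k,m)|\le C|\omega(m-k)|$ is precisely what follows from the evidently intended hypothesis $|(\mathcal{F}_{\Tn}\sigma)(k,m)|\le C|\omega(m)|$, and this is also what the paper's own proof tacitly uses when it passes to $(|\omega|*|f|)(k)$; the companion Theorem~\ref{THM:cpt} states the hypothesis in that form. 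So your argument is correct for the intended statement, and you have in effect silently corrected the typo; it would be worth making this explicit.
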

   
   The proof of this result is straightforward once we observe that the assumption means that the convolution kernel of $\Op(\sigma)$ is in $\ell^1$, and then the statement follows by the Young inequality. We give a simple argument for completeness and also to prepare for Theorem \ref{THM:cpt}.
   
   \begin{proof}[Proof of Proposition \ref{compact}]
   Let $f\in \ell ^{1}(\mathbb Z^n).$
   We can write the operator $\Op(\sigma)$ as
   \begin{eqnarray*}
   \text{Op}(\sigma)f(k) &=& \sum_{m\in \mathbb{Z}^{n}}f(m)\int_{\mathbb{T}^n}e^{-2\pi i(m-k)\cdot x}\sigma(k,x)\text{d}x \\
   &=& \sum_{m\in \mathbb{Z}^{n}}f(m)(\mathcal{F}_{\mathbb T^n}\sigma)(k,m-k).
   \end{eqnarray*}
   Let us define $(\mathcal{F}_{\mathbb T^n}\sigma)^{\sim}$ by 
   \[(\mathcal{F}_{\mathbb T^n}\sigma)^{\sim}(k,m) = (\mathcal{F}_{\mathbb T^n}\sigma)(k,-m). \]
   It follows that we can write $\text{Op}(\sigma)f$ as a convolution
   \begin{eqnarray*}
   \text{Op}(\sigma)f(k)&=& \sum_{m\in \mathbb{Z}^{n}}f(m)(\mathcal{F}_{\mathbb T^n}\sigma)^{\sim}(k,k-m) \\
   &=& ((\mathcal{F}_{\mathbb T^n}\sigma)^{\sim}(k,\cdot) * f)(k).
   \end{eqnarray*}
   Taking absolute value to the power $p$ and the sum of both sides, we obtain 
   \begin{eqnarray*}
   \|\text{Op}(\sigma)f\|_{\ell^{p}(\mathbb{Z}^n)}^{p} &=& \sum_{k\in \mathbb{Z}^{n}}|((\mathcal{F}_{\mathbb T^n}\sigma)^{\sim}(k,\cdot) * f)(k) |^{p}\\
   &\leq & \sum_{k\in \mathbb{Z}^{n}}((|(\mathcal{F}_{\mathbb T^n}\sigma)^{\sim}(k,\cdot)| * |f|)(k)) ^{p}. \\
   &\leq &
    C^p \sum_{k\in \mathbb{Z}^{n}}\Big((|\omega| * |f|)(k)\Big)^{p} \\
    &\leq & C^p \|\omega\|_{\ell ^{1}(\mathbb{Z}^n)}^{p}\|f\|_{\ell^{p}(\mathbb{Z}^n)}^{p},
   \end{eqnarray*}
   using Young's inequality for convolution in the last line. 
      The fact that  $\ell^{1}(\mathbb Z^n)$ is dense in $\ell^{p}(\mathbb Z^n)$ completes the proof for all $1\leq  p< \infty$.
   \end{proof}
   
 For $n=1$, these statements were established in \cite{molahajloo2009pseudo}. 
   
   One condition for compactness of operators appeared in Corollary \ref{COR:comp}. Now we record another condition, strengthening the condition of Theorem \ref{compact} on the symbol $\sigma$ to guarantee that the corresponding pseudo-difference operator is compact on  $\ell ^{p}(\mathbb Z^n)$.
      
   \begin{thm}\label{THM:cpt}
   Let $\sigma:\mathbb{Z}^{n}\times \mathbb{T}^n\to\C$ be a measurable function  such that there exist a positive function $\lambda:\mathbb Z^n\to\R$ and a function $\omega\in \ell ^{1}(\mathbb Z^n)$ such that
      \[|(\mathcal{F}_{\mathbb{T}^n}\sigma)(k,m)| \leq \lambda(k)|\omega(m)|, \quad\textrm{ for all }\; m, k \in \mathbb Z^n,\]
   and such that
   \[\lim\limits_{|k|\rightarrow \infty}\lambda(k) = 0.\]
   Then the pseudo-difference operator $\Op(\sigma): \ell^p(\mathbb{Z}^n)\rightarrow \ell ^p(\mathbb{Z}^n)$ is a compact operator for all $1\leq p<\infty.$
   \end{thm}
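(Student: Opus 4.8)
The plan is to prove compactness by approximating $\Op(\sigma)$ in the operator norm of $\mathscr{L}(\ell^p(\Zn))$ by finite-rank operators, exploiting that on the discrete lattice $\Zn$ truncating the output to a bounded set produces a genuinely finite-dimensional range. The two tools I would carry over from Proposition \ref{compact} are the representation
\[\Op(\sigma)f(k)=\sum_{m\in\Zn} f(m)\,(\mathcal{F}_{\mathbb{T}^n}\sigma)(k,m-k),\]
and the Young-type estimate established in its proof: if $|(\mathcal{F}_{\mathbb{T}^n}\sigma)(k,m)|\le C\,|\omega(m)|$ for some $\omega\in\ell^1(\Zn)$, then $\|\Op(\sigma)\|_{\mathscr{L}(\ell^p(\Zn))}\le C\,\|\omega\|_{\ell^1(\Zn)}$ for every $1\le p<\infty$. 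Since $\lambda(k)\to0$ as $|k|\to\infty$ and $\lambda$ is finite at each point, $\lambda$ is bounded on $\Zn$; the hypothesis then gives $|(\mathcal{F}_{\mathbb{T}^n}\sigma)(k,m)|\le(\sup_k\lambda(k))\,|\omega(m)|$, so $\Op(\sigma)$ is bounded on every $\ell^p(\Zn)$.

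Next I would introduce, for $N\in\mathbb{N}$, the truncated symbols $\sigma_N(k,x):=\mathbf 1_{\{|k|\le N\}}\,\sigma(k,x)$. By \eqref{neil} we have $\Op(\sigma_N)f(k)=\mathbf 1_{\{|k|\le N\}}\,\Op(\sigma)f(k)$, so $\Op(\sigma_N)=P_N\circ\Op(\sigma)$, where $P_N$ restricts a sequence to the finite ball $\{k:|k|\le N\}$. As $P_N$ has finite-dimensional range and $\Op(\sigma)$ is bounded, each $\Op(\sigma_N)$ is a bounded finite-rank operator, hence compact on $\ell^p(\Zn)$ for all $1\le p<\infty$.

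The heart of the argument is the estimate on the remainder $\sigma^{(N)}:=\sigma-\sigma_N=\mathbf 1_{\{|k|>N\}}\,\sigma$. Here $\Op(\sigma)-\Op(\sigma_N)=\Op(\sigma^{(N)})$ and
\[|(\mathcal{F}_{\mathbb{T}^n}\sigma^{(N)})(k,m)|\le \mathbf 1_{\{|k|>N\}}\,\lambda(k)\,|\omega(m)|\le\Big(\sup_{|k|>N}\lambda(k)\Big)\,|\omega(m)|.\]
Applying the boundedness estimate above to $\sigma^{(N)}$ with constant $C_N:=\sup_{|k|>N}\lambda(k)$ gives
\[\|\Op(\sigma)-\Op(\sigma_N)\|_{\mathscr{L}(\ell^p(\Zn))}\le C_N\,\|\omega\|_{\ell^1(\Zn)}.\]
The hypothesis $\lambda(k)\to0$ is precisely the statement that $C_N\to0$ as $N\to\infty$, so $\Op(\sigma_N)\to\Op(\sigma)$ in operator norm. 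Since a norm limit of compact operators is compact, $\Op(\sigma)$ is compact on $\ell^p(\Zn)$ for all $1\le p<\infty$.

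I do not expect a serious analytic obstacle: the scheme is the standard ``approximate by finite rank, then pass to the norm limit''. The two features worth isolating are that the decay lies in the first (lattice) variable $\lambda(k)$---the $\ell^1$-summability of $\omega$ in the second variable alone only yields boundedness, whereas the decay of $\lambda$ is what forces the remainder norm to vanish---and that the discreteness of $\Zn$ lets $P_N\Op(\sigma)$ be honestly finite rank, so no mollification of the kernel is needed.
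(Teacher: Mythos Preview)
Your proof is correct and follows essentially the same route as the paper: truncate $\sigma$ in the lattice variable to $\{|k|\le N\}$, control $\Op(\sigma)-\Op(\sigma_N)$ via the Young-type estimate of Proposition~\ref{compact} with constant $\sup_{|k|>N}\lambda(k)\to 0$, and conclude by norm-approximation by compacts. If anything your write-up is slightly more explicit than the paper's, since you spell out that $\Op(\sigma_N)=P_N\circ\Op(\sigma)$ has finite rank, a point the paper leaves implicit.
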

   \begin{proof}
  Let us consider the sequence of functions 
    \[\sigma_{N}(k,x) := \left\{\begin{aligned}
   \sigma(k,x) , \quad |k|\leq N,\\
   0, \quad |k|> N. \\
   \end{aligned}\right.\]
   Then we have
   \begin{equation}\label{sub}
\begin{aligned}
\big(\text{Op}(\sigma) - \text{Op}(\sigma_N)\big)f(k) & = \int_{\mathbb T^n}e^{2\pi ik\cdot x}(\sigma - \sigma_N)(k,x)\widehat{f}(x)\text{d}x \\
&  = \sum_{m\in \mathbb Z^n}f(m)\int_{\mathbb T^n}e^{-2\pi i(m-k)\cdot x}(\sigma - \sigma_N)(k,x)\text{d}x \\
&   = \sum_{m\in \mathbb Z^n}f(m)(\mathcal{F}_{\mathbb T^n}(\sigma - \sigma_N))(k,m-k).
\end{aligned}
\end{equation} 
  Taking the $\ell^p$-norm and writing this using the representation as a convolution we get
   \begin{eqnarray*}
   \|\big(\text{Op}(\sigma) - \text{Op}(\sigma_N)\big)f\|_{\ell^p(\mathbb Z^n)}^{p}
   & \leq &  \sum_{k\in\mathbb Z^n}\Bigg(\Big(\big|\big(\mathcal{F}_{\mathbb T^n}(\sigma - \sigma_N)\big)^{\sim}(k,\cdot)\big|\ast\big| f\big|\Big)(k)\Bigg)^{p}\\ 
   &\leq & \sum_{|k|>N}\Bigg(\Big(\big|\big(\mathcal{F}_{\mathbb T^n}\sigma\big)^{\sim}(k,\cdot)\big|\ast\big| f\big|\Big)(k)\Bigg)^{p}.
   \end{eqnarray*}
   By hypothesis we have that for every $\varepsilon>0$  there exists some $N_0$ such that 
   $|\lambda(k)|<\varepsilon$, for all $k>N_0$, and hence also
     \begin{equation*}
   |(\mathcal{F}_{\mathbb{T}^n\sigma})^{\sim}(k,m)|^p \leq \varepsilon^p|\omega(m)|^p.
   \end{equation*}
   Using this and the Young inequality for convolutions for $N>N_0$ we obtain
   \begin{eqnarray*}
   \|\big(\text{Op}(\sigma) - \text{Op}(\sigma_N)\big)f\|_{\ell^p(\mathbb Z^n)}^{p} &\leq &  \sum_{|k|>N}\Bigg(\Big(\varepsilon\big|\omega\big|\ast\big| f\big|\Big)(k)\Bigg)^{p}\\
   &=& \varepsilon^p\|\omega\ast f\|_{\ell^{p}(\mathbb Z^n)}^{p}\\
   &\leq &  \varepsilon^{p}\|\omega\|_{\ell^{1}(\mathbb Z^n)}^{p}\|f\|_{\ell^{p}(\mathbb Z^n)}^{p}.
   \end{eqnarray*}
   Using the density of $\ell ^1(\mathbb{Z}^n)$ in $\ell ^p(\mathbb{Z}^n)$ we obtain
   \[\|\text{Op}(\sigma) - \text{Op}(\sigma_N)\|_{\mathscr{L}(\ell^p(\mathbb Z^n))} \leq  \varepsilon\|\omega\|_{\ell^1(\mathbb Z^n)}.\]
  It implies that $\text{Op}(\sigma)$ is the limit in norm of a sequence of compact operator on $\ell^{p}(\mathbb{Z}^n)$, therefore $\text{Op}(\sigma)$ is $\ell^{p}$-compact.
   \end{proof}
  
\subsection{Fourier series operators}

  The same argument as in the proof of Theorem \ref{THM:L2} allows one to extend it to a more general setting of Fourier series operators. Before we formulate a result let us introduce some notation.
   
Let $\psi:\Rn\times\Zn\to\R$ be a real-valued function such that function
$x\mapsto{e}^{i \psi(x,k)}$ is $1$-periodic for
every $k\in\Zn$. In this case, by abuse of notation, we can still write $x\in\Tn$.
For $\tau:\Tn\times\Zn\to\C$ and $v\in C^\infty(\Tn)$ let us define the operator
$T_\Tn(\psi,\tau)$
by 
\begin{equation}\label{EQ:Tphitau}
T_\Tn(\psi,\tau) v(x):=\sum_{k\in\Zn} e^{i \psi(x,k)} \tau(x,k)
\p{\Ftn{v}}(k).
\end{equation} 
Properties of such operators and their extensions have been extensively analysed in 
\cite[Section 9]{RT-JFAA} and in \cite[Sections 4.13-4.15]{ruzhansky2009pseudo}, to which we refer for their calculus, boundedness properties, and applications to hyperbolic equations.

Analogously, let $\phi:\Zn\times\Rn\to\R$ be a real-valued function such that function
$x\mapsto{e}^{i \phi(k,x)}$ is $1$-periodic for
every $k\in\Zn$.
For $\sigma:\Zn\times \Tn\to\C$ and $f\in \mathcal{S}(\Zn)$ let us define the operator
$T_\Zn(\phi,\sigma)$
by 
\begin{equation}\label{EQ:Tphitau}
T_\Zn(\phi,\sigma) f(k):=\int_\Tn e^{i \phi(k,x)} \sigma(k,x)
\p{\Fzn{f}}(x)\text{d}x.
\end{equation} 
In the special case of $\phi(k,x)=2\pi k\cdot x$ we have $T_\Zn(\phi,\sigma)=\Opzn(\sigma)$, so in analogy to $T_\Tn(\psi,\tau)$ we may call operators $T_\Zn(\phi,\sigma)$ Fourier series operators.

 \begin{thm}\label{THM:fsos}
Let $\phi:\Zn\times\Rn\to\R$ be a real-valued function such that function
$x\mapsto{e}^{i \phi(k,x)}$ is $1$-periodic for
every $k\in\Zn$, and let $\sigma:\Zn\times \Tn\to\C$.
\begin{itemize}
\item[(i)] Define $\tau(x,k):=\overline{\sigma(-k,x)}$ and $\psi(x,k):=-\phi(-k,x)$.
Then we have
\begin{equation}\label{EQ:link4}
T_\Zn(\phi,\sigma)=\Fzn^{-1}\circ T_\Tn(\psi,\tau)^*\circ \Fzn,
\end{equation} 
where $T_\Tn(\psi,\tau)^*$ is the adjoint of the operator $T_\Tn(\psi,\tau).$
\item[(ii)] Assume  that 
for all $|\alpha|\leq 2n+1$ and $|\beta|=1$ we have
\begin{equation}\label{l2a1}
  \left| \partial_x^\alpha \sigma(k,x) \right| \leq
  C \textrm{ and }
  \left| \partial_x^\alpha\triangle_k^\beta \phi(k,x) \right| \leq
  C\;\; \textrm{ for all } (k,x)\in \Zn\times\Tn.
\end{equation}
Assume also that 
\begin{equation}\label{l2a2}
  \left| \nabla_x\phi(k,x)-\nabla_x\phi(l,x) \right| \geq
  C|k-l|\; \textrm{ for all } x\in\Tn, \; k,l\in\Zn.
\end{equation}
Then $T_\Zn(\phi,\sigma)$ extends to a bounded operator 
on $\ell^{2}(\Zn)$.
\end{itemize} 
 \end{thm}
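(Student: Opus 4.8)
The plan is to mirror the two-step strategy already used for Theorem \ref{THM:link} and Theorem \ref{THM:L2}: first I would establish the conjugation identity (i) by an explicit adjoint computation, and then deduce the $\ell^2$-boundedness in (ii) by transferring the known $L^2(\Tn)$-boundedness of toroidal Fourier series operators through the isometry $\Fzn$.

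For part (i), I would introduce the auxiliary operator $Tg(k):=\int_\Tn e^{i\phi(k,x)}\sigma(k,x)g(x)\,\text{d}x$, acting on $g\in C^\infty(\Tn)$, so that by definition $T_\Zn(\phi,\sigma)=T\circ\Fzn$. Computing the adjoint from $(Tg,h)_{\ell^2(\Zn)}=(g,T^*h)_{L^2(\Tn)}$ yields $T^*h(x)=\sum_{k\in\Zn}e^{-i\phi(k,x)}\overline{\sigma(k,x)}h(k)$. Performing the substitution $k\mapsto-k$ and inserting the definitions $\psi(x,k)=-\phi(-k,x)$ and $\tau(x,k)=\overline{\sigma(-k,x)}$ turns this into $\sum_{k}e^{i\psi(x,k)}\tau(x,k)h(-k)$, which is exactly $T_\Tn(\psi,\tau)v(x)$ for $v$ with $\Ftn v(k)=h(-k)$. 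Using the relation $\Ftn v(k)=\Fzn^{-1}v(-k)$ from \eqref{EQ:ftts} identifies $v=\Fzn h$, hence $T^*=T_\Tn(\psi,\tau)\circ\Fzn$; taking adjoints and invoking the unitarity of $\Fzn$ gives $T=\Fzn^{-1}\circ T_\Tn(\psi,\tau)^*$, and composing with $\Fzn$ on the right produces \eqref{EQ:link4}.

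For part (ii), since $\Fzn$ is unitary from $\ell^2(\Zn)$ to $L^2(\Tn)$, the identity \eqref{EQ:link4} shows that $T_\Zn(\phi,\sigma)$ is bounded on $\ell^2(\Zn)$ precisely when $T_\Tn(\psi,\tau)^*$, equivalently $T_\Tn(\psi,\tau)$, is bounded on $L^2(\Tn)$. The remaining task is to verify that hypotheses \eqref{l2a1}--\eqref{l2a2} translate into the standard assumptions guaranteeing $L^2$-boundedness of toroidal Fourier series operators from \cite[Section 9]{RT-JFAA} (see also \cite[Sections 4.13--4.15]{ruzhansky2009pseudo}). The amplitude bounds $|\partial_x^\alpha\sigma(k,x)|\leq C$ become $|\partial_x^\alpha\tau(x,k)|\leq C$ after conjugation and the reflection $k\mapsto-k$, and likewise the phase bounds $|\partial_x^\alpha\triangle_k^\beta\phi|\leq C$ pass to $\psi$ since $\psi(x,k)=-\phi(-k,x)$. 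The crucial non-degeneracy condition \eqref{l2a2} becomes $|\nabla_x\psi(x,k)-\nabla_x\psi(x,l)|\geq C|k-l|$, using $\nabla_x\psi(x,k)=-\nabla_x\phi(-k,x)$ and the fact that the reflection preserves $|k-l|$.

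The main obstacle I expect lies in this final bookkeeping step: matching the reflected and conjugated data $(\psi,\tau)$ exactly to the hypotheses of the toroidal $L^2$-boundedness theorem being invoked, in particular aligning the number of required $x$-derivatives ($|\alpha|\leq 2n+1$) and the first-order difference regularity in $k$ with the precise statement in the cited references, and confirming that the reflection $k\mapsto-k$ does not disturb the lower bound in the non-degeneracy inequality. Once this verification is complete, the boundedness follows at once from the transfer identity \eqref{EQ:link4}.
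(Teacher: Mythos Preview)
Your proposal is correct and follows precisely the approach the paper takes: the paper's proof of this theorem consists of two sentences pointing back to the arguments of Theorem~\ref{THM:link} for part~(i) and Theorem~\ref{THM:L2} for part~(ii), replacing the toroidal pseudo-differential $L^2$-result by the corresponding toroidal Fourier series operator result from \cite[Theorem~9.2]{RT-JFAA} and \cite[Theorem~4.14.2]{ruzhansky2009pseudo}. Your write-up actually supplies more detail than the paper itself, including the verification that the reflection $k\mapsto -k$ and conjugation carry the hypotheses \eqref{l2a1}--\eqref{l2a2} over to $(\psi,\tau)$; the paper leaves this implicit.
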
 
 
 Part (i) follows by the same argument as that in the proof of Theorem \ref{THM:link}, so we omit the details. Part (ii) follows by the same argument as that in the proof of Theorem \ref{THM:L2}, with the exception that instead of the $L^2$-boundedness of toroidal pseudo-differential operators we use the $L^2$-boundedness of the toroidal Fourier series operators as in \cite[Theorem 9.2]{ruzhansky2009pseudo}, see also \cite[Theorem 4.14.2]{ruzhansky2009pseudo}.

  \section{Examples}
  \label{SEC:Ex}
  
 Let us give some examples of operators and their symbols as well as applications to solutions of difference equations, as an example of applications of our constructions.
 
 Let $v_{j} = (0, \ldots, 0, 1, 0, \ldots, 0)\in \mathbb{Z}^n$, where $1$ is the $j^{th}$ element of $v_j$.  
    
    \begin{enumerate}
    \item Consider the operator $A_j$ defined by 
    \[A_{j}f(k) = f(k+v_j) - f(k).\]
    Defining $e_x(k)= e^{2\pi ik\cdot x}$ for all $k\in\Zn$ and $x\in \mathbb{T}^n$, we have
    \[A_{j}e_{x}(k) = e^{2\pi i(k + v_j)\cdot x} - e^{2\pi i k\cdot x},\] 
    hence by Proposition \ref{PROP:symbols} the symbol of $A_j$ is given by
    \[\sigma_{A_{j}}(k,x) = e^{2\pi i v_j\cdot x} - 1 = e^{2\pi i x_j} - 1.\]
    The symbol $\sigma_{A_{j}}$ is independent of $k$ and $\sigma_{A_{j}}\in S^{0}(\mathbb{Z}^n\times \mathbb{T}^n)$. Moreover, the symbol $\sigma_{A_{j}}$ is not elliptic.

    \item  The operator $B_j$ defined by 
    \[B_{j}f(k) = |k|^{\mu}(f(k+v_j)+1)-|k|^\nu (f(k-v_j)+2)\]
    has symbol 
    \[\sigma_{B_{j}}(k,x) = |k|^{\mu}(e^{2\pi i x_j}+1)-|k|^{\nu}(e^{-2\pi i x_j}+2)\in S^{\max\{\mu,\nu\}}(\mathbb{Z}^n \times \mathbb{T}^n),\]
    which is  elliptic of order $\nu$ if, for example, $\nu\geq \mu$. It is not elliptic if $\mu>\nu$.
    It follows from Corollary \ref{COR:L2} that if 
    $$|k|^{\mu}(f(k+v_j)+1)-|k|^\nu (f(k-v_j)+2)=g(k),\quad\textrm{ for all }\; k\in\Zn,$$
    as well as $\nu\geq \mu$ and $g\in\ell^2_s(\Zn)$ then $f\in \ell^2_{s+\nu}(\Zn)$ for all $s\in\R$,
    where $\ell^2_s(\Zn)$ is the weighted space defined in \eqref{EQ:l2s}.
    
    \item Let us define the operator $T$ by 
    \[{T}f(k) := \sum_{j=1}^{n}\Big(f(k + v_j) - f(k - v_j)\Big) + af(k).\]
    It has symbol 
    $$
    \sigma_{T}(k,x) = \sum_{j=1}^{n}\Big(e^{2\pi i x_j} - e^{-2\pi i x_j}\Big) + a
    =2i\sum_{j=1}^{n} \sin(2\pi x_j) + a 
    $$
    in $S^0(\Zn\times\Tn)$, 
    which is elliptic if ${\rm Re }\, a \neq 0$ or if ${\rm Im }\, a\not\in [-2n,2n].$ Consequently, in these cases the operator inverse $T^{-1}\in \text{Op}(S^{0}(\mathbb{Z}^n\times \mathbb T^n))$ has symbol 
     \[\sigma_{T^{-1}}(x) = \frac{1}{2i\sum_{j=1}^{n} \sin(2\pi x_j)  + a}, \quad x\in \mathbb{T}^n.\]
    Hence the inverse operator of $T$ is given by 
    \[T^{-1}g(k) = \int_{\mathbb{T}^n}e^{2\pi k\cdot x}\frac{1}{2i\sum_{j=1}^{n} \sin(2\pi x_j)  + a} \widehat{g}(x)\text{d}x,\] 
    solving the equation
    \begin{equation}\label{EQ:exeq}
\sum_{j=1}^{n}\Big(f(k + v_j) - f(k - v_j)\Big) + af(k)=g(k).
\end{equation} 
    By Corollary \ref{COR:L2} the operator $T^{-1}$ is bounded from  
   $\ell^2_{s}(\Zn)$ to $\ell^2_{s}(\Zn)$ for any $s\in\R$ that is, if $g\in \ell^2_{s}(\Zn)$ then the solution $f$ to \eqref{EQ:exeq} satisfies 
 $f\in \ell^2_{s}(\Zn)$.
    \end{enumerate}
   

 \end{document}